\newtheorem{theorem}{Theorem}[section]
\newtheorem*{theorem-non}{Theorem}
\newtheorem{lemma}[theorem]{Lemma}
\newtheorem{corollary}[theorem]{Corollary}
\theoremstyle{definition}
\newtheorem{definition}[theorem]{Definition}
\newtheorem{example}[theorem]{Example}
\theoremstyle{remark}
\newtheorem{remark}[theorem]{Remark}
\numberwithin{equation}{section}
\newcommand{\N}{\mathbb{N}}
\newcommand{\R}{\mathbb{R}}
\newcommand{\Var}{\operatorname{Var}}
\definecolor{blue}{rgb}{0,0,1}
\def\blue{\color{blue}}
\def\adam{\color{red} Adam: }
\title{The distribution of genera of 2-bridge knots}
\author{Moshe Cohen, Abigail  DiNardo, Adam M. Lowrance, Steven Raanes, Izabella M. Rivera, Andrew J. Steindl, Ella S. Wanebo}
\begin{document}

\begin{abstract}
The average genus of a 2-bridge knot with crossing number $c$ approaches $\frac{c}{4} + \frac{1}{12}$ as $c$ approaches infinity, as proven by Suzuki and Tran and independently Cohen and Lowrance.  In this paper, for the genera of $2$-bridge knots of a fixed crossing number $c$, we show that the median and mode are both $\lfloor \frac{c+2}{4} \rfloor$ and that the variance approaches $\frac{c}{16}-\frac{17}{144}$ as $c$ approaches infinity.  We prove that the distribution of genera of 2-bridge knots is asymptotically normal. 
\end{abstract}

%

%Cohen and Lowrance previously proved that the average genus of 2-bridge knot with crossing number $c$ approaches $\frac{c}{4} + \frac{1}{12}$ as $c$ approaches infinity. In this paper, we study the genus distribution of $2$-bridge knots of a fixed crossing number $c$. We compute the median, mode, and variance of the genera of $2$-bridge knots of a fixed crossing number and show that the genus distribution is asymptotically normal. 

\maketitle

\section{Introduction}
\label{sec:Intro}
The genus $g(K)$ of a knot $K$ in $S^3$ is the minimum genus of any Seifert surface of $K$, where a Seifert surface is an oriented surface in $S^3$ whose boundary is $K$. Dunfield et al. \cite{Dun:knots} produced experimental data suggesting that the genus and crossing number of a knot are linearly related. Baader, Kjuchukova, Lewark, Misev, and Ray \cite{BKLMR} showed that the average genus of a 2-bridge knot with crossing number $c$ is bounded from below by $\frac{c}{4}$ and also showed that the quotient of the average $4$-genus and the average genus of 2-bridge knots of crossing number $c$  goes to zero as $c$ goes to infinity. Cohen \cite{CohenBound} found another lower bound for the average genus of a 2-bridge knots. Suzuki and Tran \cite{SuzTran} and independently Cohen and Lowrance \cite{CoLow} found an exact formula for the average genus of the set of $2$-bridge knots with crossing number $c$ and proved that this average genus asymptotically approaches $\frac{c}{4} + \frac{1}{12}$ as $c$ goes to infinity. See also similar work by Ray and Diao \cite{RayDiao}.

Let $\mathcal{K}_c$  be the set of 2-bridge knots with crossing number $c$, where only one of a knot and its mirror image is in the set $\mathcal{K}_c$. For example, only one of the right-handed or left-handed trefoil is in $\mathcal{K}_3$, and $|\mathcal{K}_3|=1$. Since a knot has the same genus as its mirror image, our results do not depend on this choice.  Define the random variable $G_c:\mathcal{K}_c\to\R$ by $G_c(K)=g(K)$, the genus of $K$. We call $G_c$ the \textit{genus of 2-bridge knots with $c$ crossings} random variable, or just the \textit{genus random variable} for short. With this notation, the result from the previous paragraph on average genus can be stated as the expected value $E(G_c)$ of $G_c$ goes to $\frac{c}{4}+\frac{1}{12}$ as $c$ goes to infinity.

In our first main result, we compute the median and mode of $G_c$ and find the limit variance of $G_c$ as $c$ goes to infinity. In Theorem \ref{thm:variance}, we give an exact but unwieldy formula for the variance $\Var(G_c)$ of $G_c$.

\begin{theorem}
\label{thm:main}
Let $G_c$ be the genus random variable.
\begin{enumerate}
\item The median of $G_c$ is $\left\lfloor\frac{c+2}{4}\right\rfloor$, and the mode of $G_c$ is $\left\lfloor\frac{c+2}{4}\right\rfloor$.
\item The variance $\Var(G_c)$ of $G_c$ approaches $\frac{c}{16} - \frac{17}{144}$ as $c\to\infty$.
\end{enumerate}
\end{theorem}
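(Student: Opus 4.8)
The plan is to reduce the whole statement to the explicit enumeration of $\mathcal K_c$ by genus. Recall from \cite{CoLow} (equivalently \cite{SuzTran}) the combinatorial model: a $2$-bridge knot of genus $g$ is recorded by an all-even continued fraction $[2\epsilon_1 b_1,\dots,2\epsilon_{2g} b_{2g}]$ with each $b_i\ge 1$ and $\epsilon_i\in\{\pm 1\}$, its crossing number is $c=2\sum_{i=1}^{2g} b_i - s$ where $s=\#\{i:\epsilon_i\ne\epsilon_{i+1}\}$ is the number of sign changes, and two such words present the same knot up to mirror image exactly when they differ by reversal and/or global negation. Counting words with $c$ and $g$ fixed (choose the sign pattern with $s$ changes in $2\binom{2g-1}{s}$ ways, then the $b_i$ with $\sum b_i=\tfrac{c+s}2$ in $\binom{(c+s)/2-1}{2g-1}$ ways) and applying Burnside to the resulting $(\Z/2)^2$-action gives a closed form
\[
a_{c,g}:=\#\{K\in\mathcal K_c:\, g(K)=g\}=\tfrac14\Bigl(2\sum_{\substack{0\le s\le 2g-1\\ s\equiv c\,(2),\ (c+s)/2\ge 2g}}\binom{2g-1}{s}\binom{(c+s)/2-1}{2g-1}+R_{c,g}\Bigr),
\]
where the Burnside correction $R_{c,g}$ (from palindromic and antipalindromic words) is a similar but smaller, square-root-order sum. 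Summing over $g$ recovers the known value of $|\mathcal K_c|$, which is $\tfrac13\,2^{c-3}\bigl(1+o(1)\bigr)$. All three assertions are now matters of extracting information from this formula.

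For part (2) I would substitute $a_{c,g}$ into $\Var(G_c)=\dfrac{\sum_g g^2 a_{c,g}}{\sum_g a_{c,g}}-\Bigl(\dfrac{\sum_g g\,a_{c,g}}{\sum_g a_{c,g}}\Bigr)^2$ and evaluate each of the sums $\sum_g g^k a_{c,g}$, $k=0,1,2$, in closed form: they are polynomial-in-$g$ weighted double sums of products of binomials, summable either by Chu--Vandermonde manipulations or by differentiating the bivariate generating function $\sum_{c,g} a_{c,g} x^c y^g$ and reading off a coefficient. This produces the exact (unwieldy) identity of Theorem~\ref{thm:variance}. Letting $c\to\infty$: the denominator is $\tfrac13 2^{c-3}(1+o(1))$, each numerator has the matching exponential order, and expanding the ratios to the needed precision — using that $R_{c,g}$ contributes only $O\bigl(2^{c/2}\mathrm{poly}(c)\bigr)$ and hence drops out — collapses the exact formula to $\tfrac{c}{16}-\tfrac{17}{144}+o(1)$. (As a sanity check, these moments are what one gets if $G_c$ is asymptotically a $\mathrm{Bin}(c/3,3/4)$ variable, mean $c/4$, variance $c/16$ — presumably the route to the asymptotic-normality theorem as well.)

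For part (1), the mode comes from unimodality of $g\mapsto a_{c,g}$: I would show the ratio $a_{c,g+1}/a_{c,g}$ (or the difference $a_{c,g+1}-a_{c,g}$, or log-concavity of the dominant sum) exceeds $1$ for $g<\lfloor\tfrac{c+2}4\rfloor$ and is less than $1$ for $g\ge\lfloor\tfrac{c+2}4\rfloor$, so the unique maximum sits at $g=\lfloor\tfrac{c+2}4\rfloor$; the four residues $c\bmod 4$ are run separately (that is where the floor is delicate) and one checks that $R_{c,g}$ never flips the sign near the peak. The median is the genuinely hard point: one must show $\sum_{g\le\lfloor(c+2)/4\rfloor} a_{c,g}\ge\tfrac12|\mathcal K_c|$ while $\sum_{g\le\lfloor(c+2)/4\rfloor-1} a_{c,g}<\tfrac12|\mathcal K_c|$, i.e.\ that the CDF crosses $\tfrac12$ at exactly this lattice point. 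I would attempt this either by exhibiting an (approximate) symmetry of the genus distribution about its centre that pairs off almost all of the mass, reducing the claim to a short residual estimate, or, absent a clean symmetry, by bounding the two partial sums directly against $\tfrac12|\mathcal K_c|$ using the binomial closed form together with Stirling/central-binomial estimates — again with the $c\bmod 4$ casework. The main obstacle is precisely this last step. Unlike the mode and the variance, which are essentially ``plug in and simplify,'' the median demands sharp two-sided control of a partial sum of the distribution at one prescribed integer, and the distribution is only approximately symmetric: its mean $\tfrac c4+\tfrac1{12}$ lies slightly off $\tfrac c4$, on a side of $\lfloor\tfrac{c+2}4\rfloor$ that depends on $c\bmod 4$, so the estimates must be tight enough to survive both that skew and the jitter of the floor function.
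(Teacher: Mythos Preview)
Your variance plan is sound in outline but takes a different route from the paper. Rather than evaluating $\sum_g g^k a_{c,g}$ directly from a binomial closed form and Burnside, the paper works recursively: it first establishes the recursion $t(c,g)=t(c-1,g)+t(c-2,g-1)+t(c-2,g)+t(c-3,g-1)-t(c-3,g)$ (and an analogous one for the palindromic count $t_p(c,g)$), then multiplies by $g$ and $g^2$ and sums to get linear recursions for the total genus $g(c)=\sum_g g\,t(c,g)$ and total square genus $g^2(c)=\sum_g g^2\,t(c,g)$, solves those in closed form, and reads off $\Var(G_c)$. Your direct-summation approach would also work and is conceptually straightforward, though the paper's recursion is cleaner to execute because the intermediate objects satisfy constant-coefficient recurrences with simple inhomogeneities.

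For part~(1), your mode argument via unimodality is reasonable, but your median argument has a genuine gap, which you yourself flag: you have no mechanism to pin the CDF crossing of $\tfrac12$ at the exact integer $\lfloor\tfrac{c+2}{4}\rfloor$, and asymptotic or Stirling-type estimates will not be sharp enough to survive the $c\bmod 4$ jitter and the $+\tfrac1{12}$ skew of the mean. The paper's key idea, which you are missing, is an \emph{exact} structural property called quasi-symmetry. A sequence $(a_1,\dots,a_n)$ is \emph{right-dominated quasi-symmetric} if $a_j\le a_{n-j+1}\le a_{j+1}$ for all $1\le j\le\lfloor n/2\rfloor$, and left-dominated is the mirror condition. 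The paper proves a short lemma: for any quasi-symmetric random variable, the median and mode coincide and sit at the middle index. So the whole problem reduces to proving quasi-symmetry of the genus distribution, and this is done by a clean induction on $c$ using the recursion $t_p(c,g)=t_p(c-2,g)+t_p(c-2,g-1)$ (which propagates the interleaving inequalities from $c-2$ to $c$), then transferring to $t(c,g)$ via the identity $t_p(2c,2g)=t(c,g)$, and finally to $\bar t(c,g)$ using $\bar t(c,g)=\bar t(c-2,g)+\bar t(c-2,g-1)+t_p(2c-4,2g-1)$. This inductive, exact approach completely sidesteps the asymptotic difficulty you identified: no estimates are needed, because the interleaving inequalities force both mode and median to land on the same lattice point for every $c$, not just asymptotically.
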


In our next main result, we show that the probability distribution of the genera of 2-bridge knots is asymptotically normal. For a real-valued random variable $X$ and a real number $x\in\mathbb{R}$, the cumulative distribution function of $X$ is the probability that $X$ is less than or equal to $x$, denoted by $P(X\leq x)$. The cumulative distribution function of a normal distribution with mean $\mu$ and standard deviation $\sigma$ is
\[\Phi_{\mu,\sigma}(x) = \int_{-\infty}^x \frac{1}{\sigma\sqrt{2\pi}}e^{-\frac{1}{2}\left(\frac{t-\mu}{\sigma}\right)^2}\; dt.\]

\begin{theorem}
    \label{thm:normal}
    Let $G_c$ be the genus random variable, and let $n=\left\lfloor\frac{c-3}{2}\right\rfloor$. For any $x\in\mathbb{R}$, the difference between the cumulative distribution functions of the genus random variable and the normal random variable with mean $\mu =\frac{n}{2}$ and standard deviation $\sigma=\frac{\sqrt{n}}{2}$ approaches zero as $c$ goes to infinity, that is, 
    \[\lim_{c\to\infty}P(G_c\leq x) - \Phi_{\mu,\sigma}(x) = 0.\]% for any real number $x\in\mathbb{R}$.
\end{theorem}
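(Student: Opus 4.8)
The plan is to reduce the theorem to a local (de Moivre--Laplace) limit theorem for the explicit function $a_{c,g}:=|\{K\in\mathcal K_c: g(K)=g\}|$, the number of $2$-bridge knots with crossing number $c$ and genus $g$. First I would recall, from the combinatorial framework behind \cite{SuzTran} and \cite{CoLow} that already produces the exact mean and variance, that the genus of a $2$-bridge knot equals half the length of its all-even continued fraction, so that $\mathcal K_c$ is in weight-preserving bijection (modulo reversal of continued fractions) with a combinatorial family whose enumeration by genus has a closed form. Up to a $g$-independent normalization and up to additive corrections of strictly smaller order --- these come from the reversal-palindromic expansions, which are counted once rather than in pairs, and from the parity of $c$ --- this closed form is a bounded sum of binomial coefficients $\binom{2n}{2g}$ (or a close variant) with $n=\lfloor\frac{c-3}{2}\rfloor$; in particular $|\mathcal K_c|=\sum_g a_{c,g}$ has order $2^{2n}$ while each correction term is $O(2^{n})$. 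Writing this formula down precisely --- quoting it or re-deriving it from the continued-fraction classification of $\mathcal K_c$ --- is the first task.

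Next I would push each binomial coefficient of the main term through Stirling's formula. For $g$ in the bulk, say $|2g-n|\le n^{2/3}$, this gives $\binom{2n}{2g}=\frac{2^{2n}}{\sqrt{\pi n}}\,e^{-(2g-n)^2/n}(1+o(1))$ uniformly, and the finitely many binomials present merge to leading order into a single Gaussian profile; dividing by $|\mathcal K_c|$ and simplifying yields
\[
P(G_c=g)=\frac{1}{\sigma\sqrt{2\pi}}\,e^{-(g-\mu)^2/(2\sigma^2)}\,(1+o(1))
\]
uniformly for $g$ in the bulk, where $\mu$ and $\sigma$ are exactly what the computation forces, matching (to the precision the conclusion needs) the asymptotic mean and variance of Theorem \ref{thm:main}, i.e.\ the mean $\mu=\frac n2$ and the standard deviation $\sigma$ of the statement. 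Outside the bulk, classical binomial tail bounds give $\lim_{K\to\infty}\limsup_{c\to\infty}\sum_{|g-\mu|>K\sigma}P(G_c=g)=0$, and the total mass of all correction terms is $O(n\,2^{n})=o(2^{2n})=o(|\mathcal K_c|)$, so both are negligible.

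Finally I would sum the local estimate over integers $g\le x$ and compare the resulting Riemann sum with $\int_{-\infty}^{x}\frac{1}{\sigma\sqrt{2\pi}}e^{-(t-\mu)^2/(2\sigma^2)}\,dt$; since the Gaussian has width $\sigma\to\infty$, the discretization costs only $O(1/\sigma)$, so $P(G_c\le x)=\Phi_{\mu,\sigma}(x)+o(1)$ with the error uniform in $x$, which is the asserted limit. An alternative to the Stirling step: the main-term generating polynomial $\sum_g a_{c,g}u^g$ is, up to scaling and lower-order terms, proportional to $(1+\sqrt u)^{2n}+(1-\sqrt u)^{2n}$, whose roots are all real and nonpositive, so the main term is the law of a sum of independent Bernoulli variables and the Lindeberg--Lyapunov CLT applies once one knows $\Var(G_c)\to\infty$ from Theorem \ref{thm:main}; I would still prefer the direct estimate because it simultaneously identifies $\mu$ and $\sigma$. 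The main obstacle lies in the first task: extracting a sufficiently clean closed form for $a_{c,g}$ from the topology and, especially, bounding the lower-order correction terms sharply enough to confirm that they perturb neither the Gaussian shape on the bulk nor the normalization $|\mathcal K_c|$; once $a_{c,g}$ is in hand, the analytic part is routine.
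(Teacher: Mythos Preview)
Your proposal is a sketch of a genuinely different route from the paper, and the gap you yourself flag as ``the main obstacle'' is real and is where the actual mathematics lies.

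The closed form the paper obtains (its Theorem~\ref{thm:tbarformula}) is
\[
\bar t(c,g)=\tfrac12\Bigl((-1)^{c'-g-1}\sum_{n=0}^{c'-g-1}(-1)^n\binom{n+g-1}{n}+(-1)^{c-1}\sum_{n=0}^{c-2g-1}(-1)^n\binom{n+2g-1}{n}\Bigr),
\]
an \emph{alternating} sum of binomials whose terms grow and whose successive ratios vary with $g$; it is not at all transparent that this equals $\binom{2n}{2g}$ (or a close variant) plus a uniformly lower-order correction. Applying Stirling term by term to an alternating sum with heavy cancellation is exactly the kind of step that can go wrong, and you have not indicated how you would tame it. Your alternative continued-fraction enumeration is likewise only asserted, not carried out. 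So the ``first task'' is not a formality --- it is the whole proof, and the analytic part you call routine only becomes routine after it is done.

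The paper avoids this difficulty entirely. Rather than analyse $\bar t(c,g)$ asymptotically, it compares distributions. First it drops the palindromic contribution (exponentially small) to pass from $G_c$ to $G_{T(c)}$. Then it uses the identity $t(c,g)=t_p(2c,2g)$ to reduce to $G_{T_p}$. The key lemma (Lemma~\ref{lem:tpbinom}) shows that $G_{T_p(c(n))}$ is close to the binomial $B_n$ in total variation, not by Stirling but by exploiting that $t_p$ satisfies the Pascal recurrence $t_p(c,g)=t_p(c-2,g)+t_p(c-2,g-1)$, proving monotonicity of the ratio $t_p/\binom{n}{k}$ in $k$, and then using the quasi-symmetry established in Section~\ref{sec:Mode} to bound the signed differences. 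De~Moivre--Laplace is applied only at the very end, to $B_n$ itself. This sidesteps the alternating-sum asymptotics entirely; what your approach would have to prove directly, the paper gets from the recursive structure.
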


In several other contexts, the expected value and distribution of genera have been studied, leading to similar results about normal distributions. Brooks and Makover constructed a model for random Riemann surfaces \cite{BroMak}, and Gamburd and Makover computed the expected genus of a random Riemann surface \cite{BroMak2}.   Linial and Nowik give the expected genus of a random chord diagram \cite{LinNow}.  Chmutov and Pittel proved that the distribution of genera of surfaces obtained by randomly gluing the sides of an $n$-gon is asymptotically normal \cite{CP1} and the distribution of genera of surfaces obtained by gluing the sides of multiple polygonal disks together is also asymptotically normal \cite{CP2}. Even-Zohar and Farber \cite{EZF} studied surfaces with boundary obtained from a collection of polygonal disks by gluing some of their sides together and showed that their genera is asymptotically a bivariate normal distribution.  Shrestha \cite{Shr:genus} studied similar square-tiled surfaces and showed that their genera satisfy a local central limit theorem.

In our final main result, we express the number of 2-bridge knots of a particular genus and crossing number as a sum. Define $\bar{t}(c,g)$ to be the number of 2-bridge knots with genus $g$, that is define $\bar{t}(c,g) = |\{K\in\mathcal{K}_c~:~g(K)=g\}|$.
 
\begin{theorem}
\label{thm:tbarformula}
The number of $2$-bridge knots with crossing number $c$ and genus $g$ is
\[ \bar{t}(c,g) = \frac{1}{2}\left((-1)^{c'-g-1} \sum_{n=0}^{c'-g-1}(-1)^n{n+g-1\choose n} + (-1)^{c-1}\sum_{n=0}^{c-2g-1}(-1)^{n}{n+2g-1 \choose n}\right),\]
where $c'=\left\lfloor\frac{c+1}{2}\right\rfloor$ and $1\leq g \leq \left\lfloor\frac{c-1}{2}\right\rfloor$. If $g>\left\lfloor\frac{c-1}{2}\right\rfloor$, then $\bar{t}(c,g)=0$.
%Let $c'=\frac{c}{2}$ when $c$ is even, and $c'=\frac{c+1}{2}$ when $c$ is odd. Thus, $\bar{t}(c,g)$ is defined as \[ = \frac{\left((-1)^{\frac{c'}{2}-g-1} \sum_{n=0}^{\frac{c'}{2}-g-1}(-1)^n{n+g-1\choose n}\right) + \left((-1)^{c-1}(\sum_{i=0}^{c-2g-1}(-1)^{i}{i+2g-1 \choose i}\right)}{2}.\]\\
\end{theorem}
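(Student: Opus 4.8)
The plan is to translate the statement into a group-orbit count. I rely on three standard facts about a $2$-bridge knot $K$: it has a unique continued fraction expansion $[2b_1,2b_2,\dots,2b_{2g}]$ with all entries even and nonzero; this expansion has length $2g(K)$; and the crossing number of $K$ equals $2\sum_{i=1}^{2g}|b_i|$ minus $\#\{i:\operatorname{sgn}(b_i)\ne\operatorname{sgn}(b_{i+1})\}$ (these underlie the average-genus computations of Suzuki and Tran and of Cohen and Lowrance \cite{SuzTran, CoLow}). Let $E(c,g)$ be the set of tuples $(b_1,\dots,b_{2g})\in(\mathbb{Z}\setminus\{0\})^{2g}$ with $2\sum_i|b_i|-\#\{i:\operatorname{sgn}(b_i)\ne\operatorname{sgn}(b_{i+1})\}=c$, and let $G=\langle\rho,\sigma\rangle\cong(\mathbb{Z}/2)^2$ act by the reversal $\rho\colon(b_i)\mapsto(b_{2g+1-i})$ and the global sign change $\sigma\colon(b_i)\mapsto(-b_i)$. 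By Schubert's classification, $\rho$ corresponds to replacing $q$ by $q^{-1}$ and $\sigma$ to taking the mirror image; since $\mathcal{K}_c$ contains one knot from each mirror pair, the fibers of $E(c,g)\to\mathcal{K}_c$ are exactly the $G$-orbits, so $\bar t(c,g)=|E(c,g)/G|$.

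Next I would apply Burnside's lemma, $\bar t(c,g)=\tfrac14\sum_{h\in G}|\operatorname{Fix}(h)|$ on $E(c,g)$. The sign change $\sigma$ is fixed-point free (it would force $b_i=-b_i$), so $|\operatorname{Fix}(\sigma)|=0$. A tuple fixed by $\rho$ is a palindrome, whose number of sign changes is even, so it can only occur when $c$ is even; a tuple fixed by $\rho\sigma$ satisfies $b_i=-b_{2g+1-i}$, whose number of sign changes is odd, so it can only occur when $c$ is odd. Hence exactly one of $\operatorname{Fix}(\rho),\operatorname{Fix}(\rho\sigma)$ is nonempty for any given $c$, and $\bar t(c,g)=\tfrac14\bigl(|E(c,g)|+|\operatorname{Fix}(\rho\text{ or }\rho\sigma)|\bigr)=\tfrac12\bigl(\tfrac12|E(c,g)|+\tfrac12|\operatorname{Fix}(\rho\text{ or }\rho\sigma)|\bigr)$. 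This is the origin of the outer $\tfrac12$ and of the two terms: the second sum will be $\tfrac12|E(c,g)|$ and the first will be $\tfrac12$ of the relevant fixed-point count.

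It remains to evaluate the two counts. Grouping $E(c,g)$ by the number $\delta$ of sign changes expresses $|E(c,g)|$ as a sum of products of a composition count $\binom{(c+\delta)/2-1}{2g-1}$ (the choices of $|b_i|$, valid only when $c+\delta$ is even) and a gap count $2\binom{2g-1}{\delta}$; encoding this with a generating function in which each part contributes $x^2/(1-x^2)$ and each of the $2g-1$ gaps contributes $1+x^{-1}$ collapses the sum to $\tfrac12|E(c,g)|=[x^{c-2g-1}]\,(1-x)^{-2g}(1+x)^{-1}$, and extracting this coefficient yields precisely the second sum in the statement. A palindromic (when $c$ is even) or anti-palindromic (when $c$ is odd) tuple is determined by its first half, which, counted with the analogous weight, behaves like a length-$g$ tuple of total weight $c'=\lfloor(c+1)/2\rfloor$; the same computation with $2g$ replaced by $g$ and $c$ by $c'$ gives $\tfrac12|\operatorname{Fix}(\rho)|=\tfrac12|\operatorname{Fix}(\rho\sigma)|=[y^{c'-g-1}]\,(1-y)^{-g}(1+y)^{-1}$, which is the first sum. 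Adding the two pieces completes the formula, and the vanishing for $g>\lfloor(c-1)/2\rfloor$ is immediate from $c=2\sum|b_i|-\delta\ge 2(2g)-(2g-1)=2g+1$.

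The part I expect to be delicate is not the generating-function calculus, which is routine once the model is fixed, but the setup in the first paragraph: one must check that the even continued fraction is genuinely unique, that Schubert's theorem introduces no identifications beyond the $G$-action so that $G$-orbits are exactly the fibers with the right multiplicities at tuples fixed by $\rho$ or $\rho\sigma$, and that the crossing-number formula in terms of the even expansion holds in every case, in particular for the torus knots $T(2,2g+1)$ that realize the maximal genus. After that the only remaining care is in tracking the parity of $c$, the index ranges of the two sums, and the fact that the single value $c'=\lfloor(c+1)/2\rfloor$ uniformly handles both the even ($\rho$) and odd ($\rho\sigma$) cases.
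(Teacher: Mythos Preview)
Your approach is correct and genuinely different from the paper's. The paper works entirely in the billiard-table word model: it defines $t(c,g)$ and $t_p(c,g)$ as the number of words (resp.\ palindromic-type words) in $T(c)$ of genus $g$, obtains recurrences such as $t(c,g)=t(c-1,g)+t(c-2,g-1)+t(c-2,g)+t(c-3,g-1)-t(c-3,g)$ and $t_p(c,g)=t_p(c-2,g-1)+t_p(c-4,g)+t_p(c-4,g-1)$ by case analysis on the last few runs of the word, proves the two summation formulas by induction on $c$, and finally invokes Theorem~\ref{thm:list} to get $\bar t(c,g)=\tfrac12\bigl(t(c,g)+t_p(c,g)\bigr)$. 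You instead use the even continued-fraction model directly, apply Burnside's lemma for the $(\mathbb{Z}/2)^2$-action by reversal and global sign change, and evaluate $|E(c,g)|$ and the fixed-point counts with a single generating-function extraction. The two routes match term by term: your $\tfrac12|E(c,g)|$ is the paper's $t(c,g)$, and your $\tfrac12|\operatorname{Fix}(\rho)|$ (resp.\ $\tfrac12|\operatorname{Fix}(\rho\sigma)|$) is the paper's $t_p(c,g)$; your parity observation that exactly one of these fixed sets is nonempty for a given $c$ is the analogue of Corollary~\ref{t_pevenandoddsame}.

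Your argument is more conceptual and explains structurally why the two sums differ only by the substitution $(c,2g)\mapsto(c',g)$: the fixed tuples are determined by their first half, which is a length-$g$ tuple of weight $c'$. The paper's inductive approach, on the other hand, yields the recurrences for $t(c,g)$ and $t_p(c,g)$ as its primary output, and these are reused throughout the rest of the paper to prove the median, mode, variance, and asymptotic-normality results; the closed formula for $\bar t(c,g)$ is almost a corollary. The items you flag as delicate---uniqueness of the even expansion, the fact that Schubert's classification introduces no identifications beyond the $(\mathbb{Z}/2)^2$-action, and the crossing-number formula $c=2\sum_i|b_i|-\#\{\text{sign changes}\}$---are exactly the right ones and are standard but do need to be cited precisely.
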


%A sequence $a_i$ of real numbers for $i=1, 2, 3, \dots, n$ is \textit{logarithmically concave} or log-concave for short if $a_i^2\geq a_{i-1}a_{i+1}$ for $i=2,3,\dots, n-1$. Our final main result concerns the log-concavity of $\bar{t}(c,g)$.

%\begin{theorem}
%\label{thm:tbarlog}
%Fix $c\geq 3$. The sequence $\bar{t}(c,g)$ for $g=1,2,\dots, \lfloor\frac{c-1}{2}\rfloor$ is log concave.\\
%\end{theorem}

Our approach to proving these theorems is similar to the approach of Cohen and Lowrance \cite{CoLow} and uses Koseloff and Pecker's billiard table model for 2-bridge knots \cite{KP1, KP2}. Cohen and Krishnan \cite{CoKr} and Cohen, Even-Zohar, and Krishnan \cite{CoEZKr} showed that the billiard table model for 2-bridge knots results in an explicit enumeration of the elements of $\mathcal{K}_c$, as well as an alternating diagram for each knot in $\mathcal{K}_c$. The explicit list of $2$-bridge knots allows us to argue inductively, and the associated alternating diagrams make genus computations possible. %In a companion paper \cite{LowRaa}, the third and fourth authors use a similar approach to prove that the genus distribution $\mathcal{G}_c$ of $2$-bridge knots approaches a normal distribution with mean $\frac{c}{4}+\frac{1}{12}$ and standard deviation $\sqrt{\frac{c}{16}-\frac{17}{144}}$.

\indent This paper is organized as follows. In Section \ref{sec:Background}, we detail how to use billiard diagrams to generate all 2-bridge knots. In Section \ref{sec:recursions}, we find recursive rules for the number of 2-bridge knots with crossing number $c$ and genus $g$ and find an explicit formula for $\bar{t}(c,g),$ proving Theorem \ref{thm:tbarformula}. In Section \ref{sec:Mode}, we find a formula to show the median and mode for a fixed $c$, proving part (1) of Theorem \ref{thm:main}. In Section \ref{sec:variance}, we find a formula to calculate the variance of the distribution, proving part (2) of Theorem \ref{thm:main}. In Section \ref{sec:normal}, we prove Theorem \ref{thm:normal}, showing that the genus of $2$-bridge knots of a fixed crossing number is asymptotically normal. %Finally, in Section \ref{sec:logconcave}, we find that the sequence of $\bar{t}(c,g)$ is log concave.

\medskip

{\bf Acknowledgements.} This paper is the result of a summer research project in the Undergraduate Research Science Institute at Vassar College. The third author is supported by NSF grant DMS-1811344.

\section{Background}
\label{sec:Background}

%{\blue do you want to call them billiard table trajectories so that it doesn't reuse the word diagram in the wrong way?  Just a suggestion}

We generate the set $\mathcal{K}_c$ of 2-bridge knots with a fixed crossing number $c$ using billiard table trajectories developed by Koseleff and Pecker \cite{KP1, KP2}. A \textit{billiard table diagram} of a 2-bridge knot is drawn as follows. First, draw a $3$ by $b$ grid where $3$ and $b$ are coprime. Start at the bottom left corner of the rectangle and draw a line segment such that it makes a $45$ degree angle with both the bottom and left edge of the rectangle. The segment continues until it hits an edge of the rectangle, at which point we draw a new segment whose starting point is the ending point of the previous segment. The two segments should form a right angle. Repeat this process until a line segment hits a corner, at which point it terminates. The resulting piecewise-linear arc is a \textit{billiard table trajectory} and intersects itself $b-1$ times, once on each vertical grid line except the two that are edges of the rectangle. We turn the billiard table trajectory into a billiard table diagram by making each self intersection 
$\tikz[baseline=.6ex, scale = .4]{
\draw (0,0) -- (1,1);
\draw (0,1) -- (1,0);
}
~$
into a crossing in one of two ways: 
$\tikz[baseline=.6ex, scale = .4]{
\draw (0,0) -- (1,1);
\draw (0,1) -- (.3,.7);
\draw (.7,.3) -- (1,0);
}
~$, denoted $+$, or
 $\tikz[baseline=.6ex, scale = .4]{
\draw (0,0) -- (.3,.3);
\draw (.7,.7) -- (1,1);
\draw (0,1) -- (1,0);
}
~$, denoted $-$, and joining the two ends that intersect corners. A billiard table trajectory and its corresponding billiard table diagram are shown in Figure \ref{fig:billiard}. In the billiard table diagram, the two ends of the line at the corners are understood to be connected such that they form a knot. With a height function where the leftmost points are maxima, the resulting diagram has two bridges and so represents the unknot or a 2-bridge knot.

\begin{figure}[!h]
\centering

\begin{tikzpicture}[scale=.55]
% Projection
% Dashed Grid
\draw[dashed, white!50!black] (0,0) rectangle (11,3);
\foreach \x in {1,...,10}
	{\draw[dashed, white!50!black] (\x,0) -- (\x,3);}
\foreach \x in {1,2}
	{\draw[dashed, white!50!black] (0,\x) -- (11, \x);}
%Billiard projection
\foreach \x in {0,2,4,6}
	{\draw[thick] (\x,0) -- (\x+3,3);
	\draw[thick] (\x+1,3) -- (\x+4,0);}
\draw[thick] (1,3) -- (0,2) -- (2,0);
\draw[thick] (9,3) -- (11,1) -- (10,0);
\draw[thick] (8,0) -- (11,3);
\draw[thick, ->] (0,0) -- (1.5,1.5);

\begin{scope}[xshift = 12 cm]
% Knot Diagram
	% Dashed Grid
	\draw[dashed, white!50!black] (0,0) rectangle (11,3);
	\foreach \x in {1,...,10}
		{\draw[dashed, white!50!black] (\x,0) -- (\x,3);}
	\foreach \x in {1,2}
		{\draw[dashed, white!50!black] (0,\x) -- (11, \x);}
	% Knot Projection
	
	\draw[thick] (0,0) -- (1.8,1.8);
	\draw[thick] (2.2,2.2) -- (3,3) -- (3.8,2.2);
	\draw[thick] (4.2,1.8) -- (6,0) -- (7.8,1.8);
	\draw[thick] (8.2,2.2) -- (9,3) -- (9.8,2.2);
	\draw[thick] (10.2,1.8) -- (11,1) -- (10,0) -- (7,3) -- (5.2,1.2);
	\draw[thick] (4.8,.8) -- (4,0) -- (1,3) -- (0,2) -- (0.8,1.2);
	\draw[thick] (1.2,0.8) -- (2,0) -- (2.8,0.8);
	\draw[thick] (3.2,1.2) -- (5,3) -- (5.8,2.2);
	\draw[thick] (6.2,1.8) -- (6.8,1.2);
	\draw[thick] (7.2,0.8) -- (8,0) -- (8.8,0.8);
	\draw[thick] (9.2,1.2) -- (11,3);
	\draw[thick, ->] (0,0) -- (1.5,1.5);
	\end{scope}
	\end{tikzpicture}

\caption{A $3$ by $11$ billiard table trajectory and its billiard table diagram corresponding to the word $+--+-++--+$.}
    \label{fig:billiard}
\end{figure}
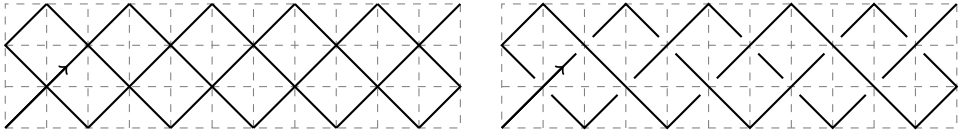

All the information of the billiard table diagram is captured by its associated sequence of pluses and minuses, which we think of as a word comprised of the symbols $\{+,-\}$. Since any 2-bridge knot can be generated by an infinite number of words, as in Cohen and Krishnan \cite{CoKr} and Cohen, Even-Zohar, and Krishnan \cite{CoEZKr}, we restrict the set of words so that each 2-bridge knot will only be counted exactly once or exactly twice, as stated in Theorem \ref{thm:list} below.

\begin{definition}
\label{def:TC}
Define $T(c)$ to be the \textit{partially double counted set of words corresponding to 2-bridge knots with crossing number $c$}. The elements of this set are words in the symbols $\{+,-\}$  corresponding to billiard table diagrams. When $c$ is odd, then a word $w$ is in $T(c)$ if and only if 
\[
w=(+)^{\varepsilon_1}(-)^{\varepsilon_2}(+)^{\varepsilon_3}(-)^{\varepsilon_4}\ldots(-)^{\varepsilon_{c-1}}(+)^{\varepsilon_c}, \]
and when $c$ is even, then a word $w$ is in $T(c)$ if and only if 
\[w=(+)^{\varepsilon_1}(-)^{\varepsilon_2}(+)^{\varepsilon_3}(-)^{\varepsilon_4}\ldots(+)^{\varepsilon_{c-1}}(-)^{\varepsilon_c},\]
where in both cases $\varepsilon_i\in\{1,2\}$ for $i\in\{1,\ldots,c\}$, $\varepsilon_1=\varepsilon_c=1$, and the length of the word $\ell=\sum_{i=1}^{c}\varepsilon_i \equiv 1$ mod $3$. Each subword $(+)^{\varepsilon_i}$ or $(-)^{\varepsilon_i}$ is called a \textit{run} in $w$.
\end{definition}

Although the billiard table diagram associated with a word $w\in T(c)$ of length $\ell$ has $\ell$ crossings, Theorem \ref{thm:alternating} below describes a reduced alternating diagram of the same knot with $c$ crossings, where $c$ is as given in the definition. Hence the crossing number of the knot is indeed $c$, and thus we use the notation $T(c)$ to describe the set. 

The set $T(c)$ is called partially double counted because every 2-bridge knot is represented by either one or two words in $T(c)$. The \textit{reverse} $r(w)$ of a word $w=a_1a_2\dots a_\ell$ is the word $r(w)=a_\ell a_{\ell-1}\dots a_1$. The \textit{reverse mirror of $w$}, denoted $\bar r(w)$, is the reverse of $w$ but with all pluses replaced by minuses and minuses replaced by pluses.

\begin{definition}
Define  the \textit{set of words of palindromic type} $T_p(c)$ to be the subset of $T(c)$ consisting of all words $w\in T(c)$ satisfying $w=r(w)$ when $c$ is odd and $w=\bar r(w)$ when $c$ is even.
\end{definition}

The following theorem explains how $T(c)$ and $T_p(c)$ can be used to enumerate the elements of $\mathcal{K}_c$. It is based on Schubert's classification of 2-bridge knots \cite{Sch}. This particular statement comes from Lemma 2.1 and Assumption 2.2 in \cite{CohenBound}.
\begin{theorem}
\label{thm:list}
Every $2$-bridge knot is represented by one or two words in $T(c)$. If a $2$-bridge knot is represented by a word $w$ of palindromic type, then $w$ is the unique word in $T(c)$ representing the knot. On the other hand, if a $2$-bridge knot is represented by a word $w$ that is not of palindromic type, then the knot is represented by exactly two words in $T(c)$: the words $w$ and $r(w)$ when $c$ is odd and the words $w$ and $\bar{r}(w)$ when $c$ is even.
\end{theorem}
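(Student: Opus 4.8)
The plan is to deduce Theorem~\ref{thm:list} from Schubert's classification of $2$-bridge knots by fractions, using the continued-fraction dictionary that underlies the billiard table model.

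First I would set up that dictionary precisely. Reading a word $w\in T(c)$ as its sequence of run lengths $(\varepsilon_1,\dots,\varepsilon_c)$ with $\varepsilon_i\in\{1,2\}$, one associates the continued fraction $[\varepsilon_1,\varepsilon_2,\dots,\varepsilon_c]$ with the sign pattern dictated by the alternation of $+$ and $-$, and hence a rational $p/q$ in lowest terms with $p$ odd and $0<q<p$; by Koseleff--Pecker \cite{KP1,KP2} and Cohen--Krishnan \cite{CoKr} this $p/q$ is the Schubert fraction of the $2$-bridge knot carried by the billiard table diagram of $w$, and every $2$-bridge knot arises from infinitely many words in this way. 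Combining the later Theorem~\ref{thm:alternating} (which turns the billiard diagram into a reduced alternating diagram with $c$ crossings) with the fact that a reduced alternating diagram realizes the crossing number, I would record that the knot attached to $w\in T(c)$ genuinely has crossing number $c$, so that the indexing by $c$ is the right one.

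Next I would translate the two word operations into Schubert equivalences. Reversing a word reverses the run sequence, and the classical continuant identity says that if $p/q=[\varepsilon_1,\dots,\varepsilon_c]$ then $[\varepsilon_c,\dots,\varepsilon_1]=p/q'$ with $qq'\equiv(-1)^{c-1}\pmod p$; thus for odd $c$ the map $w\mapsto r(w)$ realizes $q\mapsto q^{-1}\bmod p$, which by Schubert fixes the knot type, while for even $c$ the same map realizes $q\mapsto -q^{-1}\equiv p-q^{-1}\pmod p$, which by Schubert is the mirror, so that the reverse mirror $\bar r(w)$ (reversal composed with the mirror move $q\mapsto p-q$) fixes the knot type. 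Conversely, Schubert's theorem says the only fractions representing a given $2$-bridge knot, up to mirror image, are those with $q$ in the orbit of $\{q,q^{-1},p-q,p-q^{-1}\}$; so once one knows this orbit meets the set $T(c)$ in exactly $\{w,r(w)\}$ (odd $c$) or $\{w,\bar r(w)\}$ (even $c$), the count follows: the knot is represented by two words unless $w=r(w)$ (resp.\ $w=\bar r(w)$), i.e.\ unless $w$ is of palindromic type, in which case by exactly one; and since the invariants studied here depend only on the unoriented knot, the choice of one element of each mirror pair in $\mathcal{K}_c$ is immaterial.

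The crux, and the step I expect to be the genuine obstacle, is the \emph{canonical form} statement: that among all billiard words yielding a fixed Schubert fraction $p/q$, precisely the words whose runs all have length $1$ or $2$, whose first and last runs have length $1$, and whose total length is $\equiv 1\pmod 3$ survive, and that there is exactly one such word in each admissible residue class of $q$ modulo $p$. This is an arithmetic analysis of how the elementary ``billiard reductions'' --- which merge or split runs and alter the length by multiples of $3$ --- act on the continued fraction, and it is exactly what is packaged into Lemma~2.1 and Assumption~2.2 of \cite{CohenBound}, resting on Cohen--Krishnan \cite{CoKr} and Cohen--Even-Zohar--Krishnan \cite{CoEZKr}. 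I would either cite it directly or reprove it by induction on the word length, carrying the pair $(p\bmod 3,\,q\bmod p)$ through each reduction move and checking that the normalized representative is reached and is unique. Everything outside this canonical-form input is bookkeeping with Schubert's theorem and the continuant reversal identity.
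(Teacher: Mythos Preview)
The paper does not prove Theorem~\ref{thm:list} at all: it states the result and attributes it to Lemma~2.1 and Assumption~2.2 of \cite{CohenBound}, which in turn rest on Schubert's classification \cite{Sch} and the enumeration in \cite{CoKr,CoEZKr}. Your proposal therefore goes well beyond what the paper itself supplies; you are sketching a proof of a cited background result.

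That said, your outline is sound and correctly identifies the architecture. The continuant reversal identity and Schubert's classification do exactly what you say, and you are right that the substantive content is the canonical-form claim: that every Schubert class meets $T(c)$ in precisely the orbit $\{w,r(w)\}$ (odd $c$) or $\{w,\bar r(w)\}$ (even $c$). You are also right that this is what is packaged in \cite{CohenBound,CoKr,CoEZKr}, and that reproving it requires a careful analysis of how the length-$3$ billiard moves interact with the run-length constraints $\varepsilon_i\in\{1,2\}$, $\varepsilon_1=\varepsilon_c=1$. One small caution: the continued fractions arising from the billiard model carry alternating signs, so the continuant identity you invoke must be the signed version; the conclusion $qq'\equiv(-1)^{c-1}\pmod p$ still holds, but it is worth making the sign bookkeeping explicit when you write this out.
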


While a word corresponds to a billiard table diagram with $\ell$ crossings, Cohen \cite{CohenBound} gave an algorithm for converting a word into a reduced alternating diagram with $c$ crossings. The crossings in the alternating diagram are arranged into two horizontal rows and are read from left to right. Crossings on the first row always appear in the format $\tikz[baseline=.6ex, scale = .4]{
\draw (0,0) -- (1,1);
\draw (0,1) -- (.3,.7);
\draw (.7,.3) -- (1,0);
}
~$, and crossings in the second row always appear in the format  $\tikz[baseline=.6ex, scale = .4]{
\draw (0,0) -- (.3,.3);
\draw (.7,.7) -- (1,1);
\draw (0,1) -- (1,0);
}
~$. See Figure \ref{fig:alternating} for an example. %{\blue Remove this following sentence?} {\adam Isn't the next sentence part of the algorithm displayed in Figure 2?  \blue Yes, but it is repeated in the Theorem.} Runs of the form $+$ and $--$ are replaced with crossings in the first row, and runs of the forms $-$ and $++$ are replaced with crossings in the second row. Cohen \cite{CohenBound} explains this correspondence; see Figure \ref{fig:alternating} for an example.
\begin{theorem}
\label{thm:alternating}
Each word $w$ in $T(c)$ corresponds to an alternating diagram where the runs $+$ and $--$ are replaced with crossings $\tikz[baseline=.6ex, scale = .4]{
\draw (0,0) -- (1,1);
\draw (0,1) -- (.3,.7);
\draw (.7,.3) -- (1,0);
}
~$ in the first row and the runs $-$ and $++$ are replaced with crossings $\tikz[baseline=.6ex, scale = .4]{
\draw (0,0) -- (.3,.3);
\draw (.7,.7) -- (1,1);
\draw (0,1) -- (1,0);
}
~$ in the second row.
\end{theorem}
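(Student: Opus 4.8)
\medskip

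\noindent\emph{Proof strategy.} The plan is to identify the knot produced by a word $w\in T(c)$ with a standard $4$-plat and then recognize the two-row picture in the statement as the reduced alternating diagram of that $4$-plat. First I would pass from $w$ to a continued fraction: using the billiard-table--to--Schubert-form correspondence of Koseleff and Pecker \cite{KP1,KP2}, made explicit for this setting by Cohen and Krishnan \cite{CoKr} and by Cohen, Even-Zohar, and Krishnan \cite{CoEZKr} and recorded in Cohen \cite{CohenBound}, I would show that the knot of $w$ has a continued fraction $[n_1,n_2,\dots,n_k]$ with every $n_j\ge 1$ and $n_1+\cdots+n_k=c$. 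The entries are read off combinatorially from $w$: assign to each run the row dictated by the statement (a $+$ or a $--$ to row $1$, a $-$ or a $++$ to row $2$), observe that since consecutive runs have opposite signs they land in the same row exactly when their lengths have opposite parity, and take $n_j$ to be the size of the $j$th maximal block of consecutive same-row runs. The hypothesis $\ell\equiv 1\pmod 3$ is what makes $b=\ell+1$ the canonical grid width, so that this recipe returns exactly $c$ crossings' worth of entries, while $\varepsilon_1=\varepsilon_c=1$ pins down the behaviour at the two corners of the grid.

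Next I would recognize the two-row diagram. The rational knot with continued fraction $[n_1,\dots,n_k]$, all $n_j\ge 1$, has its classical alternating $4$-plat diagram assembled from $k$ twist regions of sizes $n_1,\dots,n_k$, and in the standard flat projection these consecutive twist regions alternate between two horizontal bands. Unwinding the block structure above, this is precisely the diagram that uses one crossing per run of $w$, arranged in two rows, with the first-row crossings in the shape $\tikz[baseline=.6ex, scale = .4]{\draw (0,0) -- (1,1);\draw (0,1) -- (.3,.7);\draw (.7,.3) -- (1,0);}$ and the second-row crossings in the shape $\tikz[baseline=.6ex, scale = .4]{\draw (0,0) -- (.3,.3);\draw (.7,.7) -- (1,1);\draw (0,1) -- (1,0);}$; once the two rows are fixed the over/under pattern is forced by alternation, and the crossing count is $\sum_j n_j=c$. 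Alternatively one can skip the continued fraction and build a direct ambient isotopy from the billiard diagram of $w$ to the two-row diagram run by run: a length-$1$ run is already the prescribed crossing, and a length-$2$ run of like-signed crossings, using the extra room in the height-$3$ grid, flypes to a single crossing of the \emph{opposite} row-type (the geometric shadow of the continued-fraction identity that trades a $2$-twist for one crossing of the other kind).

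The remaining point, and the one I expect to be the main obstacle, is reducedness: one must check that the two-row diagram has no nugatory crossing and no bigon, so that its $c$ crossings realize the crossing number (via the theorem that a reduced alternating diagram is of minimal crossing number). This is exactly where the arithmetic hypotheses are needed and cannot be dropped --- $\varepsilon_i\in\{1,2\}$ forbids a run of length $\ge 3$, which would be locally simplifiable, and $\ell\equiv 1\pmod 3$ rules out the non-canonical grid widths whose billiard words would admit cancellations straddling run boundaries. I would verify reducedness either from the continued fraction (all entries $\ge 1$, so the $4$-plat diagram is reduced in the classical sense) or by a short direct case analysis on adjacent pairs of runs.
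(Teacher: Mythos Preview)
The paper does not prove this theorem. It is stated as background material and attributed to Cohen \cite{CohenBound}: the sentence immediately preceding the theorem says ``Cohen \cite{CohenBound} gave an algorithm for converting a word into a reduced alternating diagram with $c$ crossings,'' and the theorem is then stated without proof, accompanied only by the illustrative Figure~\ref{fig:alternating}. So there is no proof in this paper to compare your proposal against.

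That said, your outline is a reasonable sketch of how such a result is established in the cited sources. The continued-fraction route you describe---reading off $[n_1,\dots,n_k]$ from maximal same-row blocks of runs and identifying the result with the standard alternating $4$-plat---is essentially the content of the Cohen--Krishnan and Cohen papers you cite, and your alternative ``direct isotopy, run by run'' description matches the informal way the correspondence is usually explained. One small correction: the condition $\ell\equiv 1\pmod 3$ is not about selecting a canonical grid width (the grid is $3\times(\ell+1)$ automatically); rather, it is the condition that the billiard trajectory closes up to a \emph{knot} (one component) rather than a two-component link. The reducedness of the resulting two-row diagram follows, as you say, from all continued-fraction entries being at least $1$, which in turn is forced by $\varepsilon_i\in\{1,2\}$ and the alternation of signs in the runs.
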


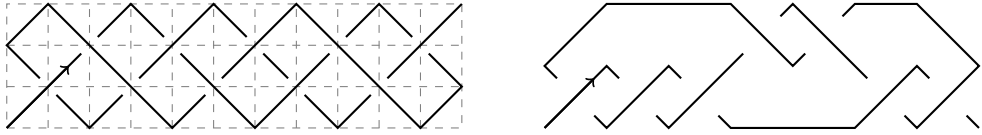
\begin{figure}[!h]
\centering

\begin{tikzpicture}[scale=.55]

% Knot Diagram
	% Dashed Grid
	\draw[dashed, white!50!black] (0,0) rectangle (11,3);
	\foreach \x in {1,...,10}
		{\draw[dashed, white!50!black] (\x,0) -- (\x,3);}
	\foreach \x in {1,2}
		{\draw[dashed, white!50!black] (0,\x) -- (11, \x);}
	% Knot Projection
	
	\draw[thick] (0,0) -- (1.8,1.8);
	\draw[thick] (2.2,2.2) -- (3,3) -- (3.8,2.2);
	\draw[thick] (4.2,1.8) -- (6,0) -- (7.8,1.8);
	\draw[thick] (8.2,2.2) -- (9,3) -- (9.8,2.2);
	\draw[thick] (10.2,1.8) -- (11,1) -- (10,0) -- (7,3) -- (5.2,1.2);
	\draw[thick] (4.8,.8) -- (4,0) -- (1,3) -- (0,2) -- (0.8,1.2);
	\draw[thick] (1.2,0.8) -- (2,0) -- (2.8,0.8);
	\draw[thick] (3.2,1.2) -- (5,3) -- (5.8,2.2);
	\draw[thick] (6.2,1.8) -- (6.8,1.2);
	\draw[thick] (7.2,0.8) -- (8,0) -- (8.8,0.8);
	\draw[thick] (9.2,1.2) -- (11,3);
	\draw[thick, ->] (0,0) -- (1.5,1.5);

\begin{scope}[xshift = 13 cm, scale=1.5]	

\draw[thick] (0,0) -- (1,1) -- (1.2,.8);
\draw[thick] (1.8,.2) -- (2,0) -- (3.2,1.2);
\draw[thick] (3.8,1.8) -- (4,2) -- (5.2,.8);
\draw[thick] (5.8,.2) -- (6,0) -- (7,1) -- (6,2) -- (5,2) -- (4.8,1.8);
\draw[thick] (4.2,1.2) -- (4,1) -- (3,2) -- (1,2) -- (0,1) -- (.2,.8);
\draw[thick] (.8,.2) -- (1,0) -- (2,1) -- (2.2,.8);
\draw[thick] (2.8,.2) -- (3,0) -- (5,0) -- (6,1) -- (6.2,.8);
\draw[thick] (6.8,.2) -- (7,0);

\draw[thick,->] (0,0) -- (.8,.8);

\end{scope}
	\end{tikzpicture}

\caption{A billiard table diagram corresponding to the word $+--+-++--+$ and its associated alternating diagram.}
    \label{fig:alternating}
\end{figure}

Murasugi \cite{Mur:genus} and Crowell \cite{Cro:genus} proved that the genus of an alternating knot $K$ is the genus of the Seifert surface coming from applying Seifert's algorithm to an alternating diagram of $K$. Our computation of the number $\bar{t}(c,g)$ of $2$-bridge knots with crossing number $c$ and genus $g$ depends on applying Seifert's algorithm to the alternating diagrams obtained from Theorem \ref{thm:alternating}.

\begin{theorem}
Let $K$ be an alternating knot with alternating diagram $D$ such that $D$ has $c$ crossings and performing Seifert's algorithm to $D$ results in $s$ Seifert circles. The genus of $K$ is 
\[g(K) = \frac{1}{2}\left(1 + c - s\right).\]
\end{theorem}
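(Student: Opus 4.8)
The plan is to combine the Murasugi--Crowell theorem quoted immediately above with a direct Euler-characteristic computation of the Seifert surface produced by Seifert's algorithm, so that essentially all that remains is bookkeeping. First I would recall the structure of Seifert's algorithm: applied to the diagram $D$, it produces an oriented surface $F$ whose boundary is $K$, built by spanning each of the $s$ Seifert circles with a disk and then attaching one twisted band at each of the $c$ crossings. Because $D$ is a connected diagram of a knot, the surface $F$ is connected and has exactly one boundary component.

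Next I would compute $\chi(F)$. Viewing $F$ as a CW-style assembly of $s$ disks and $c$ bands, each disk contributes $+1$ to the Euler characteristic and each band (a $1$-handle) contributes $-1$, so $\chi(F) = s - c$. On the other hand, for a connected orientable surface of genus $g(F)$ with one boundary component we have $\chi(F) = 2 - 2g(F) - 1 = 1 - 2g(F)$. Equating the two expressions gives $1 - 2g(F) = s - c$, hence
\[ g(F) = \frac{1}{2}\left(1 + c - s\right). \]
Finally, since $K$ is alternating and $D$ is an alternating diagram of $K$, the theorem of Murasugi and Crowell cited above says that the Seifert surface obtained from $D$ realizes the genus of $K$, i.e.\ $g(K) = g(F)$, which yields the claimed formula.

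The argument has no real obstacle: the genus-minimality input has been delegated to the cited Murasugi--Crowell result, and the remainder is the standard Euler-characteristic count for Seifert's algorithm. The only point that needs a word of care is the connectedness of $F$, equivalently that $D$ is a connected diagram; this is automatic here because $D$ is a diagram of a knot (a single component), so there are no degenerate cases to separate out.
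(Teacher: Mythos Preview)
Your argument is correct and is exactly the standard one: invoke Murasugi--Crowell for minimality, then compute the genus of the Seifert-algorithm surface via $\chi(F)=s-c=1-2g(F)$. There is nothing to add or repair.

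Regarding comparison with the paper: the paper does not actually prove this theorem. It is stated in Section~\ref{sec:Background} as a background result, with the genus-minimality attributed to Murasugi and Crowell in the sentence preceding the statement, and no proof is given. So your write-up supplies the (short) argument that the paper simply omits, and your reliance on the cited Murasugi--Crowell theorem matches precisely how the paper frames the result.
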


Ernst and Sumners \cite[Theorem 5]{ErnSum} computed the number $|\mathcal{K}_c|$ of 2-bridge knots. %These computations will be useful for us when we compute the variance of $g(\mathcal{K}_c)$.

\begin{theorem}
\label{thm:ernstsumners}
The number $|\mathcal{K}_c|$ of 2-bridge knots with $c$ crossings where a knot and its mirror image are not counted separately is given by
\[
|\mathcal{K}_c| = 
\begin{cases}
\frac{1}{3}(2^{c-3}+2^{\frac{c-4}{2}}) & \text{ for }4 \geq c\equiv 0 \text{ mod }4,\\
\frac{1}{3}(2^{c-3}+2^{\frac{c-3}{2}}) & \text{ for }5\geq c\equiv 1 \text{ mod }4, \\
\frac{1}{3}(2^{c-3}+2^{\frac{c-4}{2}}-1) & \text{ for }6 \geq c\equiv 2 \text{ mod }4, \text{ and}\\
\frac{1}{3}(2^{c-3}+2^{\frac{c-3}{2}}+1) & \text{ for }3\geq c\equiv 3 \text{ mod }4.
\end{cases}
\]
\end{theorem}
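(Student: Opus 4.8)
This is Ernst and Sumners' count, which I would recover from the enumeration already set up. The key input is Theorem \ref{thm:list}: a $2$-bridge knot with a palindromic-type representative is represented by exactly one word of $T(c)$, and every other $2$-bridge knot by exactly two. Writing $b$ for the number of knots with a non-palindromic representative gives $|T_p(c)| + 2b = |T(c)|$ and $|T_p(c)| + b = |\mathcal{K}_c|$, hence
\[ |\mathcal{K}_c| = \tfrac12\bigl(|T(c)| + |T_p(c)|\bigr), \]
so it suffices to compute $|T(c)|$ and $|T_p(c)|$ as explicit functions of $c$.

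Both are vector counts. Since the sign of the $i$-th run of a word in $T(c)$ is forced ($+$ for odd $i$, $-$ for even $i$), a word is determined by its run-length vector $(\varepsilon_1,\dots,\varepsilon_c) \in \{1,2\}^c$ subject to $\varepsilon_1 = \varepsilon_c = 1$ and $\sum_i \varepsilon_i \equiv 1 \pmod 3$. Recording which of the $c-2$ interior runs have length $2$, the length condition turns into a single congruence modulo $3$ on the number $k$ of such runs, so $|T(c)| = \sum_{k \equiv \rho(c)\,(3)} \binom{c-2}{k}$. I would evaluate this by the roots-of-unity filter over $\mathbb{Z}/3$: with $\omega = e^{2\pi i/3}$ and the identities $1+\omega=-\omega^2$, $1+\omega^2=-\omega$, the two nontrivial terms collapse to an $O(1)$ correction whose value ($\pm1$ or $\pm2$) is controlled by a phase modulo $3$; here that phase works out to $3c-5\equiv 1 \pmod 3$ regardless of $c$, which pins the correction down and gives $|T(c)| = \tfrac13\bigl(2^{c-2} + (-1)^{c-1}\bigr)$.

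For $T_p(c)$ I would first verify, by pushing the sign pattern through the operations $r$ and $\bar r$, that being of palindromic type is exactly the condition $\varepsilon_i = \varepsilon_{c+1-i}$ on the run-length vector, in both parities of $c$. Then I split on parity. For $c = 2d$ even, a palindromic vector is pinned down by $\varepsilon_2,\dots,\varepsilon_d$, its length is $2(\varepsilon_1+\dots+\varepsilon_d)$, and $\sum_i\varepsilon_i \equiv 1 \pmod 3$ again becomes a congruence on the number of $2$'s among the $d-1$ free runs, giving $|T_p(c)| = \tfrac13\bigl(2^{(c-2)/2} + (-1)^{c/2}\bigr)$. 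For $c = 2d+1$ odd the central run $\varepsilon_{d+1}$ is self-paired and free, so the count splits into the cases $\varepsilon_{d+1}=1$ and $\varepsilon_{d+1}=2$, each a binomial sum over the remaining $d-1$ runs with its own target residue; one of the two phases lands on $0 \pmod 3$ (contributing a $+2$), and adding the sums gives $|T_p(c)| = \tfrac13\bigl(2^{(c-1)/2} - (-1)^{(c-1)/2}\bigr)$.

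Substituting into $|\mathcal{K}_c| = \tfrac12(|T(c)|+|T_p(c)|)$ and setting $n=\lfloor\tfrac{c-3}{2}\rfloor$, the leading terms become $\tfrac13 2^{c-3}$ and $\tfrac13 2^{n}$, while the $(-1)^{c-1}$ term from $|T(c)|$ and the $\pm1$ term from $|T_p(c)|$ add to $0$ for $c\equiv 0,1\pmod4$, to $-2$ for $c\equiv2\pmod4$, and to $+2$ for $c\equiv3\pmod4$; halving then reproduces the four stated formulas exactly. I expect the only real obstacle to be bookkeeping: carrying the congruence $\ell\equiv1\pmod3$ — together with the forced endpoints $\varepsilon_1=\varepsilon_c=1$ and, in the palindromic case, the doubling of run lengths — through to the correct residue class in each binomial sum, and keeping straight when the roots-of-unity phase is $0$ (a $+2$) versus not (a $-1$). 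I would also note that the coprimality constraint $\gcd(3,b)=1$ built into the billiard-table construction is already absorbed by the length congruence in the definition of $T(c)$, so the reduction neither over- nor under-counts trajectories.
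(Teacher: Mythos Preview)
Your argument is correct, but it is worth noting that the paper does not prove Theorem~\ref{thm:ernstsumners} at all: it is quoted as a result of Ernst and Sumners \cite{ErnSum} and used as a black box. So there is no ``paper's own proof'' to compare against in the strict sense.

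That said, your derivation is a genuine alternative that recovers the Ernst--Sumners count entirely from the billiard-table enumeration internal to this paper. The identity $|\mathcal{K}_c| = \tfrac12(|T(c)|+|T_p(c)|)$ is exactly the consequence of Theorem~\ref{thm:list} that the authors use elsewhere (e.g.\ in the proof of Theorem~\ref{thm:tbarformula}), and your closed forms for $|T(c)|$ and $|T_p(c)|$ agree with the formulas the paper later quotes as Equations~\eqref{eq:tc} and~\eqref{eq:tpc}, attributed there to Cohen and Lowrance \cite{CoLow}. Where the paper (via that reference) obtains those counts from the Jacobsthal-type recurrences $t(c)=t(c-1)+2t(c-2)$ and $t_p(c)=t_p(c-2)+2t_p(c-4)$, you instead reduce directly to a single residue-class binomial sum and evaluate it with a roots-of-unity filter. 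Your observation that the relevant phase is $3c-5\equiv 1\pmod 3$ independently of $c$ is correct and is exactly what forces the correction term in $|T(c)|$ to be $(-1)^{c-1}$; the palindromic case is handled cleanly by your check that palindromic type is equivalent to $\varepsilon_i=\varepsilon_{c+1-i}$ in both parities. The final case split modulo $4$ checks out. What your route buys is a self-contained proof of Theorem~\ref{thm:ernstsumners} from the paper's own setup, bypassing both the external citation and the recurrence arguments; what the paper's route buys is brevity, since those formulas are already in the literature.
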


\section{The number of 2-bridge knots with $c$ crossings of genus $g$}
\label{sec:recursions}

Define $t(c,g)$ and $t_p(c,g)$ to be the number of words in $T(c)$ and $T_p(c)$ respectively corresponding to knots with crossing number $c$ and genus $g$. In this section, we find recursive formulas for both $t(c,g)$ and $t_p(c,g)$, as well as express both counts as sums. Finally, we use Theorem \ref{thm:list} to prove Theorem \ref{thm:tbarformula}.

%{\adam After many edits, I don't know if the subsections here make sense. Theorem \ref{thm:3formulas} doesn't seem to fit in either subsection.  \blue  Yes.  I was thinking maybe a new Subsection including lots of other things?  Or do you want to just remove them entirely?}

%This section is dedicated to finding the recursive formulas for $t(c,g)$ and $t_p(c,g)$.  We split this section into two pieces, one with the general case and the other with the palindromic case. 

\subsection{Formulas for $T(c)$.}
 Following Cohen and Lowrance \cite{CoLow}, the set $T(c)$ can be partitioned into four subsets. From the parity of the crossing number $c$, we know the final run: it will be $+$ if $c$ is odd and $-$ if $c$ is even. We assume that $c$ is odd, but the case where $c$ is even is identical once the symbols $+$ and $-$ are interchanged. Since $c$ is odd, the penultimate run must be a single $-$ or a double $--$, and the anti-penultimate run must be a single $+$ or a double $++$. There are four cases for the final three runs of a word in $T(c)$: 
\begin{enumerate}
    \item $+-+$,
    \item $++--+$,
    \item $+--+$, and
    \item $++-+$.
\end{enumerate}
In order to develop a recursive formula for $t(c,g)$, we use the following replacements of the two penultimate runs in $w\in T(c)$ to obtain a word $w'$ in $T(c-1)$ or $T(c-2)$.
\begin{enumerate}
    \item The final runs $+-+$ can be replaced by $++-$, resulting in a word in $T(c-1)$.
    \item The final runs $++--+$ can be replaced by $+-$, resulting in a word in $T(c-1)$.
    \item The final runs $+--+$ can be replaced by $+$, resulting in a word in $T(c-2)$.
    \item The final runs $++-+$ can be replaced by $+$, resulting in a word in $T(c-2)$.
\end{enumerate}
 Observe that these replacements do not affect the modulo 3 condition on word length in Definition \ref{def:TC}.

\begin{example}
Table \ref{tab:Tcases} shows four words in $T(7)$, one corresponding to each of the cases above, along with the replacement word in $T(6)$ or $T(5)$. The subword being replaced and the corresponding replacement word are in parentheses.
\begin{center}
\begin{table}[h]
\begin{tabular}{|c| l| l|}
\hline
Case & Word in $T(7)$ & Replacement word \\
\hline
\hline
1 & $+ --++--(+-+)$ & $+ --++--(++-)$\\
\hline
2 & $+-+--(++--+)$ & $+-+--(+-)$\\
\hline
3 & $+--++-(+--+)$ & $+--++-(+)$\\
\hline
4 & $+--++-(++-+)$ & $+--++-(+)$\\
\hline
\end{tabular}
\caption{Four of the eleven words in $T(7)$ along with their replacement words in $T(6)$ and $T(5)$.}
\label{tab:Tcases}
\end{table}
\end{center}
\end{example}

\begin{comment}
Table \ref{tab:c456} illustrates the above replacements.  
\begin{center}
\begin{table}[h]
\begin{tabular}{|c|c||c|c|}
\hline
$T(4)$ & $+-+()-$ 				& $+-+(-++)-$ 				& \\
\cline{1-2}
$T(4)$ & $+-+()-$ 				& $+-+(-{}-+)-$ 			& \\
\cline{1-2}
\multirow{3}{*}{$T(5)$}			& $+-{}-++(-)+$ 	& $+-{}-++(-{}-++)-$ 	& $T(6)$\\
														& $+-++(-{}-)+$ 	& $+-++(-+)-$ 				& \\
														& $+-{}-+(-{}-)+$ & $+-{}-+(-+)-$ 		& \\
\hline
\end{tabular}
\caption{The sets $T(4)$, $T(5)$, and $T(6)$ with the subwords in the parentheses replaced as in the four rules above. Observe $t(6) = t(5) + 2t(4)$. \textcolor{blue}{The table appears in Cohen and Lowrance. Replace the example.}}
\label{tab:c456}
\end{table}
\end{center}
\end{comment}

Let $t(c)$ be the number of words in $T(c)$. Cohen and Lowrance \cite{CoLow} proved that $t(c)$ satisfies the Jacobsthal recurrence relation, that is, $t(c) = t(c-1) + 2t(c-2)$ and also has formula
\begin{equation}
    \label{eq:tc}
    t(c) = \frac{2^{c-2}-(-1)^c}{3}.
\end{equation}
The following lemma is a refinement of the recursive relation for $t(c)$ that takes into account different genera.

\begin{lemma}
\label{lemma:trecur}
Let $t(c,g)$ be the number of words in $T(c)$ corresponding to a knot with genus $g$. Then $t(c,g)$ satisfies the recurrence relation \[t(c,g) = t(c-1,g) + t(c-2,g-1) + t(c-2,g) + t(c-3,g-1) - t(c-3,g).\]

\begin{proof}[Proof of Lemma \ref{lemma:trecur}]
We split this into cases that correspond to the partition of $T(c)$ into four sets, except that case 2 is split into further subcases by considering the possibilities for the $(c-3)$rd run. Figure \ref{fig:t(c,g)} summarizes the cases. Define $t_1(c,g)$ to be the number of words in $T(c)$ with genus $g$ in case 1, that is, ending in the subword $+-+$. Similarly define $t_{2A}(c,g)$, $t_{2B}(c,g)$, $t_3(c,g)$, and $t_4(c,g)$. Since every possibility for the final runs is contained in exactly one of the cases, it follows that
\[t(c,g) = t_1(c,g)+t_{2A}(c,g) + t_{2B}(c,g) + t_3(c,g) + t_4(c,g).\]

In case 1, the final subword $+-+$ is replaced with $++-$, resulting in an arbitrary word in case 2 or 3 for crossing number $c-1$. Since the crossing number and the number of Seifert circles decrease by one, the genus remains the same after replacement. Thus $t_1(c,g) = t_{2A}(c-1,g) + t_{2B}(c-1,g) + t_3(c-1,g)$.  

In case 2A, the final subword $-++--+$ is replaced with $-+-$, resulting in an arbitrary word in case 1 for $c-1$ crossings. Since the crossing number and the number of Seifert circles decrease by one, the genus remains the same after replacement. Thus $t_{2A}(c,g) = t_{1}(c-1,g)$. 

In case 2B, the final subword $--++--+$ is replaced with $--+-$, resulting in an arbitrary word in case 4 for $c-1$ crossings. Since the crossing number decreases by one and the number of Seifert circles increases by one, the genus decreases by one. Thus $t_{2B}(c,g) = t_4(c-1,g-1)$.

In case 3, the final subword $+--+$ is replaced with $+$, resulting in an arbitrary word for crossing number $c-2$. Since the crossing number decreases by 2 and the number of Seifert circles remains the same, the genus decreases by one. Thus $t_{3}(c,g) = t(c-2,g-1).$ 

In case 4, the final subword $++-+$ is replaced with $+$, resulting in an arbitrary word for crossing number $c-2$. Since the crossing number and the number of Seifert circles decrease by $2$, the genus remains the same.  Thus $t_4(c,g) = t(c-2,g)$.  

Combining these cases, we find 
\begin{align*}
t(c,g) = & \; t_1(c,g) + t_{2A}(c,g) + t_{2B}(c,g) + t_{3}(c,g) + t_{4}(c,g)\\
= & \; t_{2A}(c-1,g) + t_{2B}(c-1,g) + t_3(c-1,g) + t_1(c-1,g)\\
& \; + t_4(c-1,g-1) + t(c-2,g-1) + t(c-2,g)\\
= & \; t_1(c-1,g) + t_{2A}(c-1,g) + t_{2B}(c-1,g) + t_3(c-1,g) + t_4(c-1,g) \\
& \; - t_4(c-1,g) + t_4(c-1,g-1) + t(c-2,g-1) + t(c-2,g)\\
= & \; t(c-1,g) + t(c-2,g-1) + t(c-2,g) + t(c-3,g-1) - t(c-3,g),
\end{align*}
our desired recursion.
\end{proof}
\end{lemma}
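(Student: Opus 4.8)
The plan is to turn genus-counting into Seifert-circle-counting: by the theorems of Murasugi--Crowell and the genus formula $g(K)=\frac{1}{2}(1+c-s)$, for the alternating diagram attached to a word $w\in T(c)$ via Theorem \ref{thm:alternating} the genus is determined by the crossing number $c$ and the number $s$ of Seifert circles produced by Seifert's algorithm. So I would track, for each of the word-surgery moves listed above, how the pair $(c,s)$ changes, and hence how $g$ changes. Since the last run of a word in $T(c)$ is forced by the parity of $c$, I assume $c$ is odd (the even case is verbatim after swapping $+\leftrightarrow-$). The penultimate and antepenultimate runs are each either single or double, which partitions $T(c)$ into the four cases $+\text{-}+$, $++\text{-}\text{-}+$, $+\text{-}\text{-}+$, $++\text{-}+$; writing $t_i(c,g)$ for the number of genus-$g$ words in case $i$, we have $t(c,g)=\sum_i t_i(c,g)$, so it suffices to express each $t_i(c,g)$ through values of $t$ at crossing number $c-1$ and $c-2$.

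Next I would check, for each replacement move, two things. First, that the move is a bijection from case $i$ onto a prescribed union of cases of $T(c-1)$ or $T(c-2)$: this is essentially formal, since each move only edits the tail of the word, leaves the rest fixed, and the edited tail dictates which case(s) the output can occupy, while the parity condition and the $\ell\equiv 1\pmod 3$ condition are preserved as already noted. Second, the effect on $s$, read off from the local picture of Seifert's algorithm near the last few crossings of the alternating diagram (as in Figure \ref{fig:alternating}). The expected outcomes are: replacing $+\text{-}+$ by $++\text{-}$ drops $c$ and $s$ each by $1$, so $\Delta g=0$; replacing $+\text{-}\text{-}+$ by $+$ drops $c$ by $2$ and leaves $s$ fixed, so $\Delta g=-1$; replacing $++\text{-}+$ by $+$ drops $c$ by $2$ and $s$ by $2$, so $\Delta g=0$. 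The delicate case is $++\text{-}\text{-}+\mapsto+\text{-}$: whether this creates or destroys a Seifert circle depends on whether the run preceding $++$ (the $(c-3)$rd run) is single or double, so case $2$ must be split into $2A$ and $2B$, which map respectively into case $1$ and case $4$ of $T(c-1)$, with $\Delta g=0$ and $\Delta g=-1$ respectively.

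Recording the resulting identities --- $t_1(c,g)=t_{2A}(c-1,g)+t_{2B}(c-1,g)+t_3(c-1,g)$, $t_{2A}(c,g)=t_1(c-1,g)$, $t_{2B}(c,g)=t_4(c-1,g-1)$, $t_3(c,g)=t(c-2,g-1)$, $t_4(c,g)=t(c-2,g)$ --- and substituting into $t(c,g)=\sum_i t_i(c,g)$, I would then reorganize: add and subtract $t_4(c-1,g)$ so that $t_1(c-1,g)+t_{2A}(c-1,g)+t_{2B}(c-1,g)+t_3(c-1,g)+t_4(c-1,g)$ collapses to $t(c-1,g)$, and use the case-$4$ identity once more to rewrite $t_4(c-1,g-1)=t(c-3,g-1)$ and $t_4(c-1,g)=t(c-3,g)$. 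What remains is exactly $t(c-1,g)+t(c-2,g-1)+t(c-2,g)+t(c-3,g-1)-t(c-3,g)$.

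The main obstacle I anticipate is the Seifert-circle bookkeeping in the previous paragraph's second step --- in particular getting the sign of $\Delta s$ right in every (sub)case, and especially understanding precisely when the move in case $2$ merges two Seifert circles versus splits one. This needs the explicit local shape of the alternating diagram near its tail and a careful trace of how the Seifert circles there are affected by deleting or relocating the last crossings; the terminal algebraic recombination, by contrast, is routine.
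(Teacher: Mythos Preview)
Your proposal is correct and follows essentially the same approach as the paper: you partition $T(c)$ by the last three runs, split case~2 into subcases $2A$ and $2B$ according to the $(c-3)$rd run, track the change in $(c,s)$ under each replacement to determine $\Delta g$, derive the same five identities for the $t_i(c,g)$, and then recombine them via the add--and--subtract--$t_4(c-1,g)$ trick together with $t_4(c-1,\cdot)=t(c-3,\cdot)$. The paper's proof is exactly this, with Figure~\ref{fig:t(c,g)} supplying the local Seifert-state pictures you anticipate needing for the $\Delta s$ bookkeeping.
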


\begin{figure}[!h]
\[\begin{tikzpicture}

%% Case 1

\draw (-.25,1.5) node{\small{Terminal}};
\draw (-.25,1.1) node{\small{String}};

\draw (1.9,1.5) node{\small{Alternating}};
\draw (1.9,1.1) node{\small{Diagram}};

\draw (4,1.5) node{\small{Seifert}};
\draw (4,1.1) node{\small{State}};

\draw(-1,-.75) rectangle (5.5,.75);
\draw(0,0) node{${\scriptstyle +-+}$};
\begin{scope}[scale=.5, rounded corners = 1mm, xshift = 2.5cm, yshift = -1cm]
%\draw[white] (-.2,-.2) rectangle (3.2,2.2);
\draw (0,0) -- (1.3, 1.3);
\draw (0,1) -- (.3,.7);
\draw (.7,.3) -- (1,0) -- (2,0) -- (3,1) -- (2,2) -- (1.7,1.7);
\draw (0,2) -- (1,2) -- (2.3,.7);
\draw (2.7,.3) -- (3,0);

\draw[->] (.5, .5) -- (.1,.1);
\draw[->] (.7,.3) -- (.9,.1);
\draw[->] (2.5, .5) -- (2.9,.9);
\draw[->] (2.7,.3) -- (2.9,.1);
\draw[->] (1.5, 1.5) -- (1.9,1.1);
\draw[->] (1.3,1.3) -- (1.1,1.1);

\end{scope}

\begin{scope}[scale=.5, rounded corners = 1mm, xshift = 6.75cm, yshift = -1cm]
%\draw[white] (-.2,-.2) rectangle (3.2,2.2);
\draw[->] (0,1) -- (.4,.5) -- (0,0);
\draw[->] (0,2) -- (1,2) -- (1.4,1.5) -- (.6,.5) -- (1,0) -- (2,0) -- (2.5,.4) -- (3,0);
\draw[->] (2,1) -- (2.5,.6) -- (3,1) -- (2,2) -- (1.6,1.5) -- (2,1);
\end{scope}

\draw[->] (5.5,0) -- (7,0);
\draw (6.25,.5) node{\small{Case 1}};

\begin{scope}[xshift = 8 cm]
\draw (-.25,1.5) node{\small{Terminal}};
\draw (-.25,1.1) node{\small{String}};

\draw (1.9,1.5) node{\small{Alternating}};
\draw (1.9,1.1) node{\small{Diagram}};

\draw (4,1.5) node{\small{Seifert}};
\draw (4,1.1) node{\small{State}};
\draw(-1,-.75) rectangle (5.5,.75);
\draw(0,0) node{${\scriptstyle ++-}$};
\begin{scope}[scale=.5, rounded corners = 1mm, xshift = 3.5cm, yshift = -1cm]
%\draw[white] (-.2,-.2) rectangle (3.2,2.2);
\draw (0,0) -- (1,0) -- (2,1) -- (1.7,1.3);
\draw (1.3,1.7) -- (1,2) -- (0,1);
\draw (0,2) -- (0.3,1.7);
\draw (.7,1.3) -- (1,1) -- (2,2);
\draw[->] (0.5,1.5) -- (.9,1.9);
\draw[->] (.7,1.3) -- (.9,1.1);
\draw[->] (1.5,1.5) -- (1.9,1.9);
\draw[->] (1.7, 1.3) -- (1.9,1.1);

\end{scope}

\begin{scope}[scale=.5, rounded corners = 1mm, xshift = 7.5cm, yshift = -1cm]
%\draw[white] (-.2,-.2) rectangle (3.2,2.2);
\draw[->] (0,2) -- (.5,1.6) -- (1,2) -- (1.5,1.6) -- (2,2);
\draw[->] (0,1) -- (.5, 1.4) -- (1,1) -- (1.5,1.4) -- (2,1) -- (1,0) -- (0,0);
\end{scope}
\end{scope}

%% Case 2A
\begin{scope}[yshift = -2cm]

\draw(-1,-.75) rectangle (5.5,.75);
\draw(0,0) node{${\scriptstyle -++--+}$};
\begin{scope}[scale=.5, rounded corners = 1mm, xshift = 3cm, yshift = -1cm]
%\draw[white] (-.2,-.2) rectangle (3.2,2.2);
\draw (-1,0) -- (1,0) -- (2,1) -- (2.3,.7);
\draw (2.7,.3) -- (3,0);
\draw (-1,2) -- (0,1) -- (.3,1.3);
\draw (-.3,1.7) -- (0,2) -- (1.3,.7);
\draw (-1,1) -- (-.7,1.3);
\draw (1.7,.3) -- (2,0) -- (3,1) -- (2,2) -- (1,2) -- (.7,1.7);
\draw[->] (.3,1.3) -- (.1,1.1);
\draw[->] (.5,1.5) -- (.9,1.1);
\draw[->] (1.5,.5) -- (1.9,.9);
\draw[->] (1.7,.3) -- (1.9,.1);
\draw[->] (2.5,.5) -- (2.9,.9);
\draw[->] (2.7,.3) -- (2.9,.1);
\draw[->] (-.5,1.5) -- (-.9,1.9);
\draw[->] (-.3,1.7) -- (-.1,1.9);

\end{scope}

\begin{scope}[scale=.5, rounded corners = 1mm, xshift = 7.5cm, yshift = -1cm]
%\draw[white] (-.2,-.2) rectangle (3.2,2.2);
 \draw[->] (0,2) arc (90:-270:.4cm and .5cm);
 \draw[->] (-1,0) -- (1,0) -- (1.5,.4) -- (2,0) -- (2.5,.4) -- (3,0);
 \draw[->] (1.5,2) -- (1,2) -- (.6,1.5) -- (1,1) -- (1.5,.6) -- (2,1) -- (2.5,.6) -- (3,1) -- (2,2) -- (1.5,2);
 \draw[->] (-1,1) -- (-.6,1.5) -- (-1,2);
\end{scope}

\draw[->] (5.5,0) -- (7,0);
\draw (6.25,.5) node{\small{Case 2A}};

\end{scope}

\begin{scope}[xshift = 8cm, yshift=-2cm]

\draw(-1,-.75) rectangle (5.5,.75);

\draw(0,0) node{${\scriptstyle -+-}$};
\begin{scope}[scale=.5, rounded corners = 1mm, xshift = 3.5cm, yshift = -1cm]
%\draw[white] (-.2,-.2) rectangle (3.2,2.2);
\draw (-1,0) -- (0,0) -- (1.3,1.3);
\draw (1.7,1.7)--(2,2);
\draw (-1,2) -- (0.3,0.7);
\draw (0.7,0.3) -- (1,0) -- (2,1) -- (1,2) -- (0,2) -- (-.3,1.7);
\draw (-1,1) -- (-.7,1.3);

\draw[->] (-.3,1.7) -- (-.1,1.9);
\draw[->] (-.5,1.5) -- (-.9,1.9);
\draw[->] (0.5,0.5) -- (0.9, 0.9);
\draw[->] (0.3,0.7) -- (0.1,0.9);
\draw[->] (1.5,1.5) -- (1.9,1.1);
\draw[->] (1.7, 1.7) -- (1.9, 1.9);

\end{scope}

\begin{scope}[scale=.5, rounded corners = 1mm, xshift = 7.5cm, yshift = -1cm]
%\draw[white] (-.2,-.2) rectangle (3.2,2.2);
\draw[->] (-1,0) --(0,0) -- (.4,.5) -- (0,1) -- (-.4,1.5) -- (0,2)-- (1,2) --(1.5,1.6) -- (2,2);
\draw[->] (1,1) -- (1.5,1.4) -- (2,1) -- (1,0) -- (0.6,0.5) -- (1,1);
\draw[->] (-1,1) -- (-.6,1.5) -- (-1,2);
\end{scope}
\end{scope}

%% Case 2B
\begin{scope}[yshift = -4cm]

\draw(-1,-.75) rectangle (5.5,.75);
\draw(0,0) node{${\scriptstyle --++--+}$};
\begin{scope}[scale=.5, rounded corners = 1mm, xshift = 3cm, yshift = -1cm]
%\draw[white] (-.2,-.2) rectangle (3.2,2.2);
\draw (-.3,.3) -- (0,0) -- (1,0) -- (2,1) -- (2.3,.7);
\draw (2.7,.3) -- (3,0);
\draw (-1,0) -- (.3,1.3);
\draw (-1,2) -- (0,2) -- (1.3,.7);
\draw (1.7,.3) -- (2,0) -- (3,1) -- (2,2) -- (1,2) -- (.7,1.7);
\draw (-1,1) -- (-.7,.7);
\draw[->] (.3,1.3) -- (.1,1.1);
\draw[->] (.5,1.5) -- (.9,1.1);
\draw[->] (1.5,.5) -- (1.9,.9);
\draw[->] (1.7,.3) -- (1.9,.1);
\draw[->] (2.5,.5) -- (2.9,.9);
\draw[->] (2.7,.3) -- (2.9,.1);
\draw[->] (-.5,.5) -- (-1,0);
\draw[->] (-.3,.3) -- (-.1,.1);

\end{scope}

\begin{scope}[scale=.5, rounded corners = 1mm, xshift = 7.5cm, yshift = -1cm]
%\draw[white] (-.2,-.2) rectangle (3.2,2.2);
 \draw[->] (-1,2) -- (0,2) -- (.4,1.5) -- (0,1) -- (-.4,.5) -- (0,0) -- (1,0) -- (1.5,.4) -- (2,0) -- (2.5,.4) -- (3,0);
 \draw[->] (1.5,2) -- (1,2) -- (.6,1.5) -- (1,1) -- (1.5,.6) -- (2,1) -- (2.5,.6) -- (3,1) -- (2,2) -- (1.5,2);
 \draw[->] (-1,1) -- (-.6,.5) -- (-1,0);
\end{scope}

\draw[->] (5.5,0) -- (7,0);
\draw (6.25,.5) node{\small{Case 2B}};

\end{scope}

\begin{scope}[xshift = 8cm, yshift = -4cm]

\draw(-1,-.75) rectangle (5.5,.75);

\draw(0,0) node{${\scriptstyle --+-}$};
\begin{scope}[scale=.5, rounded corners = 1mm, xshift = 3.5cm, yshift = -1cm]
%\draw[white] (-.2,-.2) rectangle (3.2,2.2);
\draw (-.3,.3) -- (0,0) -- (1.3,1.3);
\draw (1.7,1.7)--(2,2);
\draw (-1,0) -- (0,1) -- (0.3,0.7);
\draw (-1,1) -- (-.7,.7);
\draw (0.7,0.3) -- (1,0) -- (2,1) -- (1,2) -- (0,2) -- (-1,2);

\draw[->] (0.5,0.5) -- (0.9, 0.9);
\draw[->] (0.3,0.7) -- (0.1,0.9);
\draw[->] (1.5,1.5) -- (1.9,1.1);
\draw[->] (1.7, 1.7) -- (1.9, 1.9);
\draw[->] (-.5,.5) -- (-.9,.1);
\draw[->] (-.3,.3) -- (-.1,.1);

\end{scope}

\begin{scope}[scale=.5, rounded corners = 1mm, xshift = 7.5cm, yshift = -1cm]
%\draw[white] (-.2,-.2) rectangle (3.2,2.2);
\draw[->] (0,1) arc (90:450:.4cm and .5cm);
\draw[->] (-1,1) -- (-.6,.5) -- (-1,0);
\draw[->] (-1,2) -- (1,2) --(1.5,1.6) -- (2,2);
\draw[->] (1,1) -- (1.5,1.4) -- (2,1) -- (1,0) -- (0.6,0.5) -- (1,1);
\end{scope}
\end{scope}

%% Case 3
\begin{scope}[yshift = -6cm]

\draw(-1,-.75) rectangle (5.5,.75);
\draw(0,0) node{${\scriptstyle +--+}$};
\begin{scope}[scale=.5, rounded corners = 1mm, xshift = 2.5cm, yshift = -1cm]
%\draw[white] (-.2,-.2) rectangle (3.2,2.2);
\draw[->] (0,0) -- (.5,.4) -- (1,0) -- (1.5,.4) -- (2,0) -- (2.5,.4) -- (3,0);
\draw[->] (0,1) -- (.5,.6) -- (1,1) -- (1.5,.6) -- (2,1) -- (2.5,.6) -- (3,1) -- (2,2) -- (0,2);

\end{scope}

\begin{scope}[scale=.5, rounded corners = 1mm, xshift = 7.5cm, yshift = -1cm]
%\draw[white] (-.2,-.2) rectangle (3.2,2.2);
 \draw[->] (-1,2) -- (0,2) -- (.4,1.5) -- (0,1) -- (-.4,.5) -- (0,0) -- (1,0) -- (1.5,.4) -- (2,0) -- (2.5,.4) -- (3,0);
 \draw[->] (1.5,2) -- (1,2) -- (.6,1.5) -- (1,1) -- (1.5,.6) -- (2,1) -- (2.5,.6) -- (3,1) -- (2,2) -- (1.5,2);
 \draw[->] (-1,1) -- (-.6,.5) -- (-1,0);
\end{scope}

\draw[->] (5.5,0) -- (7,0);
\draw (6.25,.5) node{\small{Case 3}};

\end{scope}

\begin{scope}[xshift = 8cm, yshift = -6cm]

\draw(-1,-.75) rectangle (5.5,.75);

\draw(0,0) node{${\scriptstyle +}$};
\begin{scope}[scale=.5, rounded corners = 1mm, xshift = 3.5cm, yshift = -1cm]
%\draw[white] (-.2,-.2) rectangle (3.2,2.2);
\draw (0,1) -- (.3,.7);
\draw (.7,.3) -- (1,0);
\draw (0,0) -- (1,1) -- (0,2);

\draw[->] (.5,.5) -- (.9,.9);
\draw[->] (.7,.3) -- (.9,.1);

\end{scope}

\begin{scope}[scale=.5, rounded corners = 1mm, xshift = 7.5cm, yshift = -1cm]
%\draw[white] (-.2,-.2) rectangle (3.2,2.2);
\draw[->] (0,0) -- (.5,.4) -- (1,0);
\draw[->] (0,1) -- (.5,.6) -- (1,1) -- (0,2);
\end{scope}
\end{scope}

%% Case 4
\begin{scope}[yshift = -8cm]

\draw(-1,-.75) rectangle (5.5,.75);
\draw(0,0) node{${\scriptstyle ++-+}$};
\begin{scope}[scale=.5, rounded corners = 1mm, xshift = 2.5cm, yshift = -1cm]
\draw (0,0) -- (2,0) -- (3,1) -- (2,2) -- (1.7,1.7);
\draw (1.3,1.3) -- (1,1) -- (0,2);
\draw (0,1) -- (.3,1.3);
\draw (.7,1.7) -- (1,2) -- (2.3,.7);
\draw (2.7,0.3) -- (3,0);

\draw[->] (0.5, 1.5) -- (0.1, 1.9);
\draw[->] (0.7,1.7) -- (0.9,1.9);
\draw[->] (1.5, 1.5) -- (1.9,1.1);
\draw[->] (1.3,1.3) -- (1.1, 1.1);
\draw[->] (2.5,0.5) -- (2.9,0.9);
\draw[->] (2.7,0.3) -- (2.9, 0.1);
\end{scope}

\begin{scope}[scale=.5, rounded corners = 1mm, xshift = 7.5cm, yshift = -1cm]
\draw[->] (0,0) -- (2,0) -- (2.5,.4) -- (3,0);
\draw[->] (0,1) -- (.4,1.5) -- (0,2);
\draw[->] (1,2) arc (90:-270:.4 cm and .5cm);
\draw[->] (2,1) -- (2.5,.6) -- (3,1) -- (2,2) -- (1.6,1.5) -- (2,1);
\end{scope}
\draw[->] (5.5,0) -- (7,0);
\draw (6.25,.5) node{\small{Case 4}};

\end{scope}

\begin{scope}[xshift = 8cm, yshift = -8cm]
\draw(-1,-.75) rectangle (5.5,.75);

\draw(0,0) node{${\scriptstyle +}$};
\begin{scope}[scale=.5, rounded corners = 1mm, xshift = 3.5cm, yshift = -1cm]
%\draw[white] (-.2,-.2) rectangle (3.2,2.2);
\draw (0,1) -- (.3,.7);
\draw (.7,.3) -- (1,0);
\draw (0,0) -- (1,1) -- (0,2);

\draw[->] (.5,.5) -- (.9,.9);
\draw[->] (.7,.3) -- (.9,.1);

\end{scope}

\begin{scope}[scale=.5, rounded corners = 1mm, xshift = 7.5cm, yshift = -1cm]
%\draw[white] (-.2,-.2) rectangle (3.2,2.2);
\draw[->] (0,0) -- (.5,.4) -- (1,0);
\draw[->] (0,1) -- (.5,.6) -- (1,1) -- (0,2);
\end{scope}
\end{scope}

\end{tikzpicture}\]

\caption{Terminal strings, alternating diagrams, and partial Seifert states corresponding to the cases in Lemma \ref{lemma:trecur}.}
\label{fig:t(c,g)}
\end{figure}
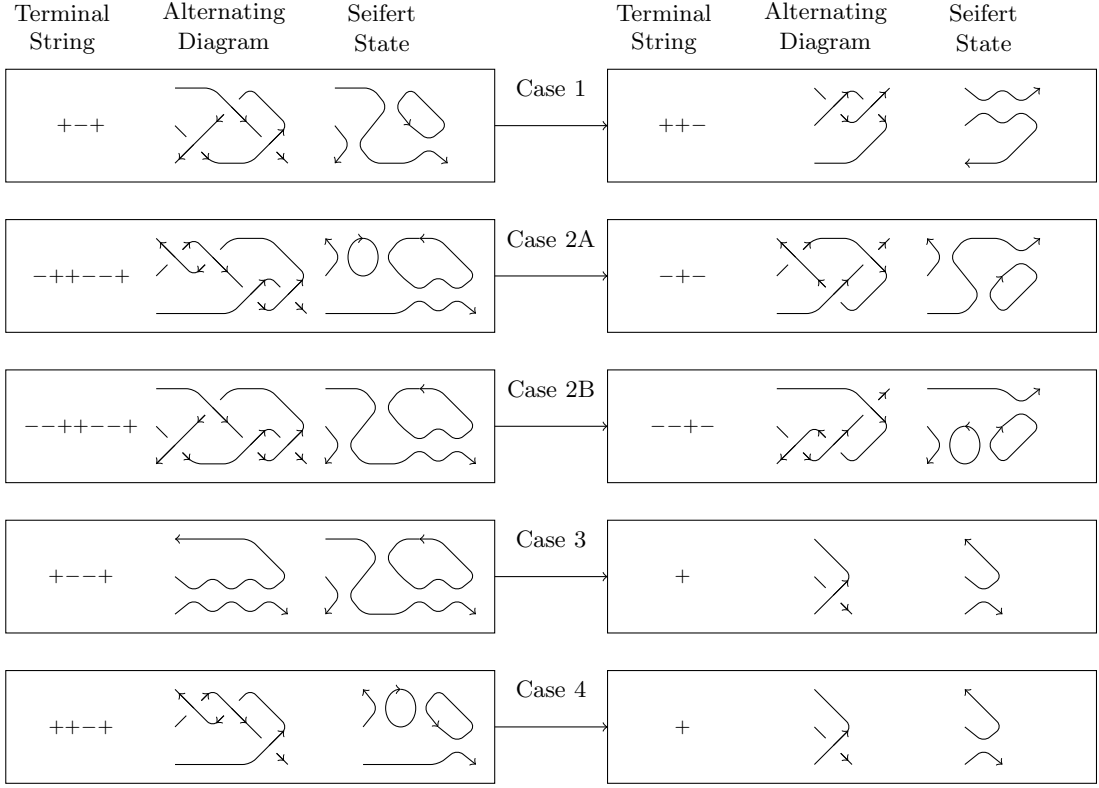

We use the recursive formula in Lemma \ref{lemma:trecur} to express $t(c,g)$ as a sum.

\begin{lemma}
\label{lemma:tcg}
The number of words $t(c,g)$ in $T(c)$ corresponding to knots of genus $g$ can be expressed as  \[t(c,g)=(-1)^{c-1}\sum_{n=0}^{c-2g-1}(-1)^n \binom{n+2g-1}{n}.\]

\begin{proof}

We proceed by induction on $c$. The genus of a 2-bridge knot is at least 1. The base cases $t(3,1)=1$, $t(4,1)=1$, $t(5,1)=1$, and $t(5,2)=1$ can be obtained by direct computation. When $3\leq c \leq 5$ and $g$ is a value not mentioned in the previous sentence, then summation's upper limit is negative, and hence $t(c,g)=0$. Thus, the result holds for crossing numbers 3, 4, and 5. 

\indent For the inductive step, we have

\begin{align*}
    t(c, g) = &\; t(c-2,g) + t(c-1,g) - t(c-3,g) + t(c-2,g-1) + t(c-3,g-1)\\
    = & \; (-1)^{c-3}\sum_{n=0}^{c-2g-3}(-1)^n{n+2g-1 \choose n}\\ 
    & \; + (-1)^{c-2}\sum_{n=0}^{c-2g-2}(-1)^n{n+2g-1 \choose n} - (-1)^{c-4}\sum_{n=0}^{c-2g-4}(-1)^n{n + 2g - 1 \choose n}\\
    & \; + (-1)^{c-3}\sum_{n=0}^{c-2g-1}(-1)^n{n+2g-3 \choose n} + (-1)^{c-4}\sum_{n=0}^{c-2g-2}(-1)^n{n+2g-3 \choose n}\\
    = & \; (-1)^{c-3}\sum_{n=0}^{c-2g-3}(-1)^n{n+2g-1 \choose n} - {c-4 \choose c-2g-3} + {c-3\choose c-2g-2} + {c-4 \choose c-2g-1}\\
    = & \; (-1)^{c-1} \sum_{n=0}^{c-2g-3}(-1)^n{n+2g-1 \choose n} + {c-4 \choose c-2g-2} + {c-4 \choose c-2g-1}\\
    = & \; (-1)^{c-1} \sum_{n=0}^{c-2g-3}(-1)^n{n+2g-1 \choose n} + {c-3 \choose c-2g-1}\\
    = & \; (-1)^{c-1} \sum_{n=0}^{c-2g-3}(-1)^n{n+2g-1 \choose n} - {c-3 \choose c-2g-2} + {c-2 \choose c-2g-1}\\
    = & \; (-1)^{c-1}\sum_{n=0}^{c-2g-1}(-1)^n \binom{n+2g-1}{n},
\end{align*}
where the second equality uses the inductive hypothesis several times, the third equality follows from terms cancelling in the second and third sums and the fourth and fifth sums, and the remaining equalities follow from the relation ${n \choose k} = {n-1 \choose k-1} + {n-1 \choose k}$.
\end{proof}
\end{lemma}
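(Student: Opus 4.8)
The plan is to induct on the crossing number $c$, using the recurrence of Lemma \ref{lemma:trecur} as the engine and Pascal's rule $\binom{n}{k}=\binom{n-1}{k-1}+\binom{n-1}{k}$ as the main simplification tool. First I would dispose of the base cases: since every 2-bridge knot has genus at least $1$, direct enumeration of $T(3)$, $T(4)$, and $T(5)$ gives $t(3,1)=t(4,1)=t(5,1)=t(5,2)=1$ with all other values zero, and one checks the proposed closed form agrees with this. In particular, whenever $c-2g-1<0$ the sum is empty and returns $0$, which is consistent with the fact that a 2-bridge knot of crossing number $c$ has genus at most $\lfloor (c-1)/2\rfloor$.

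For the inductive step, assume the formula holds for all crossing numbers less than $c$ and substitute it into the five terms of $t(c,g)=t(c-1,g)+t(c-2,g-1)+t(c-2,g)+t(c-3,g-1)-t(c-3,g)$. This produces five alternating binomial sums of two ``types'': three sums with upper binomial entry $2g-1$ (from the genus-$g$ terms $t(c-1,g),t(c-2,g),t(c-3,g)$) and two sums with upper binomial entry $2g-3$ (from the genus-$(g-1)$ terms), each carrying its own sign $(-1)^{c-k}$ and its own upper summation limit $c-2g-j$. The point is that within each type the upper limits differ by only $1$ or $2$, so once the signs are aligned the common ranges cancel and only a handful of boundary binomial coefficients survive; the leftover $\binom{c-4}{c-2g-3}$-type terms then recombine with the leftover $\binom{c-3}{c-2g-2}$- and $\binom{c-2}{c-2g-1}$-type terms via two applications of Pascal's rule, reassembling exactly the top few terms of the target sum $(-1)^{c-1}\sum_{n=0}^{c-2g-1}(-1)^n\binom{n+2g-1}{n}$ that were missing after the cancellations.

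The bookkeeping is entirely mechanical once the signs are tracked correctly, so I would present it as a single displayed chain of equalities: cancel terms between the first and second sums and between the fourth and fifth sums, then telescope the remaining binomials using $\binom{c-4}{c-2g-3}+\binom{c-4}{c-2g-1}=\binom{c-3}{c-2g-1}$ and the like. The one place where genuine care is needed, and the step I would flag as the main obstacle, is matching the parity-dependent signs $(-1)^{c-1},(-1)^{c-2},(-1)^{c-3},(-1)^{c-4}$ across the five substituted sums so that the collapse is additive rather than destructive; a single sign slip derails the whole simplification. Everything else is routine index-shifting and Pascal's rule.
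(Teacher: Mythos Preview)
Your proposal is correct and follows essentially the same route as the paper: induction on $c$ with base cases $c=3,4,5$, substitution of the inductive hypothesis into the five-term recurrence from Lemma~\ref{lemma:trecur}, cancellation of overlapping ranges among the resulting alternating sums, and repeated use of Pascal's rule to rebuild the two missing top terms of the target sum. One small slip: the illustrative identity you wrote, $\binom{c-4}{c-2g-3}+\binom{c-4}{c-2g-1}=\binom{c-3}{c-2g-1}$, is not an instance of Pascal's rule (the lower indices differ by $2$); the actual step in the computation is $\binom{c-4}{c-2g-2}+\binom{c-4}{c-2g-1}=\binom{c-3}{c-2g-1}$, preceded by $\binom{c-3}{c-2g-2}-\binom{c-4}{c-2g-3}=\binom{c-4}{c-2g-2}$.
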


\subsection{Formulas for $T_p(c)$}
Recall that $t_p(c,g)$ is the number of words of palindromic type in $T_p(c)$ corresponding to a knot of genus $g$. We use a similar algorithm to find a recursive and summation formula for $t_p(c,g)$. The arguments when $c$ is odd or even are similar with slightly different cases, and so we only give details when $c$ is even. Cohen and Lowrance \cite{CoLow} showed that if $c=2k$, then the $(k-1)$st and the $k$th runs must be one of the following four cases:
\begin{enumerate}
    \item [$(1_{p})$] a single $+$ followed by a single $-$,
    \item [$(2_{p})$] a double $++$ followed by a double $--$,
    \item [$(3_{p})$] a single $+$ followed by a double $--$,
    \item [$(4_{p})$] a double $++$ followed by a single $-$.
\end{enumerate}
As was the case with $T(c)$, the runs in the four cases above can be replaced as seen in Figure \ref{fig:tp} to form words in $T_p(c-2)$ and $T_p(c-4)$. 

Let $t_p(c)$ be the number of words in $T_p(c)$.
Cohen and Lowrance \cite{CoLow} proved that 
\begin{equation}
    \label{eq:tpc}
    t_p(c) = \frac{2^{\left\lfloor\frac{c-1}{2}\right\rfloor} - (-1)^{\left\lfloor\frac{c-1}{2}\right\rfloor}}{3},
\end{equation}
and that $t_p(c)$ satisfies the recurrence relation $t_p(c) = t_p(c-2)+2t_p(c-4)$. The following lemma is a refinement of the recurrence relation for $t_p(c)$ that takes into account different genera.

\begin{lemma}
\label{lemma:tprecur}
Let $t_p(c,g)$ be the number of words of palindromic type, crossing number $c$ and genus $g$. Then $t_p(c,g)$ satisfies the recurrence relation $t_p(c,g) = t_p(c-2,g-1) + t_p(c-4,g) + t_p(c-4,g-1)$. 

\begin{proof} Suppose $c=2k$ is even; the argument when $c$ is odd is similar and left to the reader. A subword of a word $w$ in $T_p(c)$ is a \textit{center string} if it contains the $(k+1-j)$th through the $(k+j)$th runs of $w$ for some $j\in\mathbb{N}$. Similar to the process for $t(c,g)$, we break $t_p(c,g)$ into cases 1, 2, 3A, 3B, 4A, and 4B, matching the cases above but where we consider the possibilities for the $(k-2)$nd run in case 3 and 4. Figure \ref{fig:tp} summarizes the cases. Define $t_{p1}(c,g)$ to be the number of words of palindromic type with crossing number $c$ and genus $g$ and center string $+-+-$. % if $c$ is even and center string $+--+$ if $c$ is odd. 
We similarly define $t_{p2}(c,g)$, $t_{p3A}(c,g)$, $t_{p3B}(c,g)$, $t_{p4A}(c,g)$, and $t_{p4B}(c,g)$. Since the cases form a partition of $t_p(c,g)$, it follows that $t_p(c,g) = t_{p1}(c,g) + t_{p2}(c,g) + t_{p3A}(c,g) + t_{p3B}(c,g) + t_{p4A}(c,g) + t_{p4B}(c,g)$. 

%{\blue (is the phrase ``center string'' well-defined?)}

%this probably shouldn't actually go here but earlier
%Consider the number of words in $t_p(c,g)$ as extensions of shorter words from $T_p(c-2)$ or $T_p(c-4)$ where the cases give a complete breakdown of how words can be extended and fall into $t_p(c,g)$. We now consider the contribution of each case to $t_p(c,g)$.

%{\blue There are no ``e''s in the subscripts above.  Are these $t$ the same for odd $c$?  If not, maybe we could at least mention it, maybe above:  For the sake of brevity, we remove the subscript $e$ from the notation.} {\adam I removed the "pe" from the cases. Now it's consistent. I think the idea is that if we don't talk about the odd case in detail, then why create notation for it? To answer your question ``Are these $t$ the same for odd $c$?" The $t_p(c,g)$ are the same for odd and even $c$, but the $t_{p1}(c,g)$ would have different cases for odd and even $c$.}

In case $1$, the center string $+-+-$ is replaced by $++--$, resulting in an arbitrary word in case 2 or 3 for $T_p(c-2)$. Since the crossing number decreases by 2 and the number of Seifert circles remains the same, the genus decreases by one after replacement. Thus $t_{p1}(c,g) = t_{p2}(c-2,g-1) + t_{p3A}(c-2,g-1) + t_{p3B}(c-2,g-1)$.

In case $2$, the center string $++--++--$ is replaced by $+-$, resulting in an arbitrary word in case 1, 4A, or 4B for $T_p(c-2)$. Since the crossing number decreases by 2 and the number of Seifert circles remains the same, the genus decreases by one after replacement. Thus $t_{p2}(c,g) = t_{p1}(c-2,g-1) + t_{p4A}(c-2,g-1) + t_{p4B}(c-2,g-1)$.

In case 3A, the center string $-+--++-+$ is replaced by $-+$, resulting in an arbitrary word in case 1, 4A, or 4B for $T_p(c-4)$. Since the number of crossings decreases by 4 and the number of Seifert circles decreases by 2, the genus decreases by one after replacement. Thus $t_{p3A}(c,g) = t_{p1}(c-4,g-1) + t_{p4A}(c-4,g-1) + t_{p4B}(c-4,g-1)$.

In case 3B, the center string $--+--++-++$ is replaced by $--++$, resulting in an arbitrary word in case 2, 3A, or 3B for $T_p(c-4)$. Since both the number of crossings and the number of Seifert circles decreases by 4, the genus remains the same after replacement. Thus $t_{p3A}(c,g) = t_{p2}(c-4,g) + t_{p3A}(c-4,g) + t_{p3B}(c-4,g)$.

In case 4A, the center string $-++-+--+$ is replaced by $-+$, resulting in an arbitrary word in case 1, 4A, or 4B for $T_p(c-4)$. Since both the number of crossings and the number of Seifert circles decreases by 4, the genus remains the same after replacement. Thus $t_{p4A}(c,g) = t_{p1}(c-4,g) + t_{p4A}(c-4,g) + t_{p4B}(c-4,g)$.

In case 4B, the center string $--++-+--++$ is replaced by $--++$, resulting in an arbitrary word in case 2, 3A, or 3B for $T_p(c-4)$. Since the number of crossings decreases by 4 and the number of Seifert circles decreases by 2, the genus decreases by one after replacement. Thus $t_{p4B}(c,g) = t_{p2}(c-4,g-1) + t_{p3A}(c-4,g-1) + t_{p3B}(c-4,g-1)$.

Putting these recursive rules together yields
\begin{align*}
t_p(c,g) = & \;  t_{p2}(c-2,g-1) + t_{p3A}(c-2,g-1) + t_{p3B}(c-2,g-1) + t_{p1}(c-2,g-1) + t_{p4A}(c-2,g-1) \\
& \;+ t_{p4B}(c-2,g-1) +  t_{p1}(c-4,g-1) + t_{p4A}(c-4,g-1)+ t_{p4B}(c-4,g-1) + t_{p2}(c-4,g) \\
& \; + t_{p3A}(c-4,g) + t_{p3B}(c-4,g) + t_{p1}(c-4,g) + t_{p4A}(c-4,g) + t_{p4B}(c-4,g)\\
& \; + t_{p2}(c-4,g-1) + t_{p3A}(c-4,g-1) + t_{p3B}(c-4,g-1)\\
 = & \; t_{p1}(c-2,g-1) + t_{p2}(c-2,g-1) + t_{p3A}(c-2,g-1) + t_{p3B}(c-2,g-1) + t_{p4A}(c-2,g-1)\\
 & \; + t_{p4B}(c-2,g-1) + t_{p1}(c-4,g-1) + t_{p2}(c-4,g-1) + t_{p3A}(c-4,g-1) + t_{p3B}(c-4,g-1)\\
 & \; + t_{p4A}(c-4,g-1) + t_{p4B}(c-4,g-1) + t_{p1}(c-4,g) + t_{p2}(c-4,g) + t_{p3A}(c-4,g)\\
 & \; + t_{p3B}(c-4,g) + t_{p4A}(c-4,g) + t_{p4B}(c-4,g)\\
= & \; t_p(c-2,g-1) + t_p(c-4,g-1) + t_p(c-4,g).
\end{align*}
\end{proof}
\end{lemma}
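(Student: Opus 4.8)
The plan is to run the same terminal/central case analysis that proved Lemma \ref{lemma:trecur}, but adapted to the mirror-reversal symmetry of palindromic-type words. I would do the case $c = 2k$ even in detail and remark that $c$ odd is entirely analogous with the roles of $+$ and $-$ swapped about the center. The structural input, due to Cohen and Lowrance, is that a word $w \in T_p(c)$ is pinned down by its first $k$ runs, and that its two central runs --- the $(k-1)$st and $k$th --- must realize one of the four patterns $(1_p)$--$(4_p)$ listed above. I would refine $t_p(c,g)$ into six counts $t_{p1},t_{p2},t_{p3A},t_{p3B},t_{p4A},t_{p4B}$ (as functions of $c$ and $g$), obtained from the four patterns by splitting $(3_p)$ and $(4_p)$ according to whether the $(k-2)$nd run is single or double, so that $t_p(c,g)$ is the sum of the six.

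The heart of the argument is, for each of the six cases, to exhibit a length-reducing replacement performed on the \emph{center string} of $w$ (the symmetric block of runs straddling the middle) that outputs a word of palindromic type with crossing number $c-2$ or $c-4$, to record which of the six cases the output falls into, and to read off the change in genus from the Murasugi--Crowell formula $g = \tfrac12(1 + c - s)$, with $s$ the number of Seifert circles. Concretely: in case $1_p$, replace the center $+-+-$ by $++--$, landing in case $2_p$, $3A$, or $3B$ for $T_p(c-2)$; in case $2_p$, replace $++--++--$ by $+-$, landing in case $1_p$, $4A$, or $4B$ for $T_p(c-2)$; and cases $3A$, $3B$, $4A$, $4B$ collapse to $T_p(c-4)$ by similar center replacements. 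For each move I would draw the local alternating diagram together with its partial Seifert state --- exactly as in the figure accompanying Lemma \ref{lemma:trecur} --- to see whether $s$ drops by $0$ or by $2$, which dictates whether $g$ is unchanged or decreases by $1$. This yields six identities such as $t_{p1}(c,g) = t_{p2}(c-2,g-1) + t_{p3A}(c-2,g-1) + t_{p3B}(c-2,g-1)$; summing all six and regrouping, the right-hand side assembles into one full copy of $t_p(c-2,g-1)$, one of $t_p(c-4,g-1)$, and one of $t_p(c-4,g)$, which is the asserted recurrence. Summing further over $g$ should recover the known relation $t_p(c) = t_p(c-2) + 2t_p(c-4)$, a useful consistency check.

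The step I expect to be the real obstacle is the Seifert-circle bookkeeping. Because a palindromic-type word is reconstructed from roughly half its runs, a substitution in the center is a single local modification of the diagram rather than two, and the number of Seifert circles can respond differently than in the non-palindromic setting of Lemma \ref{lemma:trecur}; each of the six cases must be checked individually against a careful local picture to confirm the claimed genus change. Alongside this, I would verify the more routine points that make the case analysis rigorous: that each replacement genuinely produces a word of palindromic type in $T_p(c-2)$ or $T_p(c-4)$ with the claimed central pattern, that it preserves the length $\equiv 1 \bmod 3$ condition of Definition \ref{def:TC}, and that the replacements are invertible so that the six cases partition $T_p(c)$ and the displayed equalities are exact. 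Finally, since the recurrence reaches back to $c-4$, a few small crossing numbers must be handled as base cases, and the parallel $c$ odd analysis --- whose central configuration is slightly different --- must be confirmed to yield the same recurrence.
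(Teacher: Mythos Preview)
Your proposal is correct and follows essentially the same approach as the paper: the same six-case partition of $T_p(c)$ by central pattern (splitting cases $3_p$ and $4_p$ according to the $(k-2)$nd run), the same center-string replacements landing in $T_p(c-2)$ or $T_p(c-4)$, the same genus bookkeeping via $g=\tfrac12(1+c-s)$ verified from local Seifert states, and the same final summation of the six identities. The specific identity you wrote out for $t_{p1}(c,g)$ matches the paper's verbatim, and your identification of the Seifert-circle tracking as the delicate step is exactly where the paper leans on its figure of partial Seifert states.
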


The recurrence relation for $t_p(c,g)$ only involves words corresponding to knots with 2 or 4 fewer crossings. Thus the recurrence relation respects the parity of the crossing. Since $t_p(3,g)=t_p(4,g)$ and $t_p(5,g)=t_p(6,g)$ for all $g$, we have the following corollary.

\begin{corollary}
\label{t_pevenandoddsame}
Let $c$ be odd. Then $t_p(c,g) = t_p(c+1,g)$ for all $g$.
\end{corollary}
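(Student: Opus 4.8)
\emph{Proof proposal.} The plan is to argue by (strong) induction on $c$, using the recurrence of Lemma~\ref{lemma:tprecur}, which holds for either parity of the crossing number,
\[ t_p(c,g) = t_p(c-2,g-1) + t_p(c-4,g) + t_p(c-4,g-1). \]
The feature to exploit is that this recurrence relates a crossing number $c$ only to $c-2$ and $c-4$, and hence never mixes odd and even crossing numbers: the odd-$c$ and even-$c$ values of $t_p(\cdot,g)$ satisfy the \emph{same} three-term recursion in lockstep. Consequently, an equality between the odd sequence and the even sequence, once established at the bottom, propagates automatically up the chain.

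First I would nail down the base cases. Because the subcase analysis in the proof of Lemma~\ref{lemma:tprecur} — the splitting that records the $(k-2)$nd run — presupposes that $c$ is large enough, the induction must be anchored at small explicit values, and the relevant base cases are exactly $t_p(3,g)=t_p(4,g)$ and $t_p(5,g)=t_p(6,g)$ for all $g$. These I would verify by directly enumerating the palindromic-type words in $T_p(3),T_p(4),T_p(5),T_p(6)$: Definition~\ref{def:TC} gives the admissible run patterns of length $\equiv 1 \pmod 3$, the symmetry condition $w=r(w)$ (for $c$ odd) or $w=\bar r(w)$ (for $c$ even) cuts the list down, and then Theorem~\ref{thm:alternating} together with the Seifert-circle genus formula gives the genus of each resulting knot. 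For instance $T_p(3)=\{+--+\}$ and $T_p(4)=\{+-+-\}$, both representing genus-$1$ knots, so $t_p(3,g)=t_p(4,g)$ for every $g$; the pair $c=5,6$ is handled in the same way.

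For the inductive step, fix an odd $c\ge 7$ and assume $t_p(c',g)=t_p(c'+1,g)$ for all $g$ and all odd $c'$ with $3\le c'<c$. Applying Lemma~\ref{lemma:tprecur} to $c$ and to $c+1$ gives
\[ t_p(c,g) = t_p(c-2,g-1) + t_p(c-4,g) + t_p(c-4,g-1) \]
and
\[ t_p(c+1,g) = t_p(c-1,g-1) + t_p(c-3,g) + t_p(c-3,g-1). \]
Since $c-2$ and $c-4$ are odd and lie in $[3,c)$, the inductive hypothesis yields $t_p(c-2,g-1)=t_p(c-1,g-1)$, $t_p(c-4,g)=t_p(c-3,g)$, and $t_p(c-4,g-1)=t_p(c-3,g-1)$. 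Substituting these termwise transforms the first display into the second, so $t_p(c,g)=t_p(c+1,g)$, and the induction is complete.

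The only step that requires real attention is the base-case computation: one must list every palindromic-type word in $T(3)$ through $T(6)$ with nothing omitted and nothing counted twice, confirm the appropriate symmetry, and compute each genus correctly. After that the argument is a purely mechanical unwinding of Lemma~\ref{lemma:tprecur}, whose parity-preserving form carries all of the structural weight.
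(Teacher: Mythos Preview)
Your proposal is correct and takes essentially the same approach as the paper: the paper also observes that the recurrence of Lemma~\ref{lemma:tprecur} only involves $c-2$ and $c-4$ and hence respects parity, and then appeals to the base cases $t_p(3,g)=t_p(4,g)$ and $t_p(5,g)=t_p(6,g)$. Your version simply spells out the induction and the base-case enumeration more explicitly than the paper does.
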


%It is thus sufficient to only prove statements about $t_p(c,g)$ when $c$ is odd, as the statement holds for even rows by Corollary \ref{t_pevenandoddsame}.

%{\blue Is this statement above necessary?  Do we only prove statements about odd rows below?}

We use the recursive formula in Lemma \ref{lemma:tprecur} to express $t_p(c,g)$ as a sum.

\begin{lemma}
\label{lemma:tp(c,g)}
The number of words $t_p(c,g)$ in $T_p(c)$ corresponding to knots of genus $g$ can be expressed as 
\[t_p(c,g)=(-1)^{c'-g-1}\sum_{n=0}^{c'-g-1}(-1)^n{n+g-1 \choose n},\] where $c' = \frac{c}{2}$ if $c$ is even and $c'=\frac{c+1}{2}$ if $c$ is odd.
\end{lemma}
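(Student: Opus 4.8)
The plan is to prove Lemma \ref{lemma:tp(c,g)} by induction on $c$, mirroring the proof of Lemma \ref{lemma:tcg}. First I would observe that by Corollary \ref{t_pevenandoddsame} (and the fact that $c' = \lfloor (c+1)/2\rfloor$ takes the same value for an odd $c$ and its successor $c+1$), it suffices to establish the formula for one parity, say $c$ even, and the odd case follows for free. So set $c = 2k$, so that $c' = k$, and the claimed formula reads $t_p(c,g) = (-1)^{k-g-1}\sum_{n=0}^{k-g-1}(-1)^n \binom{n+g-1}{n}$.

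The base cases: since the genus of a $2$-bridge knot is at least $1$, I would check small crossing numbers directly — $t_p(3,1) = t_p(4,1) = 1$, $t_p(5,1) = t_p(6,1) = 1$, $t_p(5,2) = t_p(6,2) = 1$ — using the explicit description of $T_p(c)$ (or equation \eqref{eq:tpc} together with the constraint that for these small $c$ only one or two genus values are possible), and verify the formula gives $1$ in each of these cases and $0$ when the summation's upper index $k - g - 1$ is negative, i.e.\ when $g > k - 1 = \lfloor (c-1)/2 \rfloor$. Because the recurrence in Lemma \ref{lemma:tprecur} decreases $c$ by $2$ or $4$, the induction cleanly splits along the parity of $c$, so I need both $c=4$ and $c=6$ (equivalently $c = 3$ and $c=5$) as base cases for the even strand.

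For the inductive step, I would substitute the recurrence $t_p(c,g) = t_p(c-2,g-1) + t_p(c-4,g) + t_p(c-4,g-1)$ and apply the inductive hypothesis to each of the three terms, writing out the three resulting alternating sums of binomial coefficients with their sign prefactors. Note that $(c-2)' = k-1$ and $(c-4)' = k-2$, so the summation upper limits become $(k-1)-(g-1)-1 = k-g-1$, $(k-2)-g-1 = k-g-3$, and $(k-2)-(g-1)-1 = k-g-2$ respectively, and the binomial top-arguments are $n + (g-1) - 1 = n+g-2$, $n + g - 1$, and $n + g - 2$. After pulling out a common sign $(-1)^{k-g-1}$ (checking the parities of $(k-1)-(g-1)-1$, $(k-2)-g-1$, $(k-2)-(g-1)-1$ against $k-g-1$), the goal reduces to a binomial identity: the combination of the three sums should telescope, via repeated use of Pascal's rule $\binom{m}{j} = \binom{m-1}{j-1} + \binom{m-1}{j}$, down to $\sum_{n=0}^{k-g-1}(-1)^n\binom{n+g-1}{n}$. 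This algebraic manipulation — matching index ranges, cancelling overlapping terms between the sums, and folding the leftover boundary binomials into the target sum — is the main obstacle, but it is the same style of bookkeeping already carried out in the proof of Lemma \ref{lemma:tcg}, so I expect it to go through with care about the summation limits and the $\pm$ signs. Finally, I would remark that when $g > k - 1$ all three right-hand terms vanish by the inductive hypothesis and the formula correctly returns $0$, completing the induction.
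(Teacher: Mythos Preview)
Your approach is essentially the paper's: induction on $c$ using the recurrence from Lemma \ref{lemma:tprecur}, then Pascal's rule to collapse the three alternating sums. The parity reduction via Corollary \ref{t_pevenandoddsame} is a mild streamlining the paper does not bother with, but it is valid.

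One concrete slip: your base case values $t_p(5,1) = t_p(6,1) = 1$ are wrong. In fact $t_p(5,1) = t_p(6,1) = 0$ (consistent with Equation \eqref{eq:tp(c,1)}: $5\equiv 1$ and $6\equiv 2$ mod $4$), and the formula does give $0$ here, since for $c'=3$, $g=1$ one has $(-1)^{1}\sum_{n=0}^{1}(-1)^n\binom{n}{n} = -(1-1) = 0$. This does not affect the structure of the argument, but you should correct the claimed values before submitting.
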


%{\blue Here $c'$ doesn't match what is written at the start of the proof of Theorem 3.7.  Also maybe we want the same wording to appear in both places.} {\adam It was a typo below. I've corrected it.}

\begin{proof}
Proceed by induction. The base cases $t_p(3,1)=1$, $t_p(4,1)=1$, $t_p(5,1)=0$, $t_p(5,2)=1$, $t_p(6,1)=0$, and $t_p(6,2)=1$ can be obtained by direct computation. When $3 \leq c \leq 6$ and $g$ is a value unmentioned in the previous sentence, the summation's upper limit is negative, and hence $t_p(c,g)=0$. Thus the result holds for  crossing numbers 3, 4, 5, and 6.\\
\indent For the inductive step, we have
\begin{align*}
    t_p(c,g) = & t_p(c-4,g)+t_p(c-2,g-1)+t_p(c-2,g-1)\\ 
    = & \; (-1)^{c'-g-3}\sum_{n=0}^{c'-g-3}(-1)^n{n+g-1 \choose n}\\
    & \; + (-1)^{c'-g-2}\sum_{n=0}^{c'-g-2}(-1)^n{n+g-2 \choose n} + (-1)^{c'-g-1}\sum_{n=0}^{c'-g-1}(-1)^n{n+g-2 \choose n}\\
    = & \; (-1)^{c'-g-3}\sum_{n=0}^{c'-g-3}(-1)^n{n+g-1 \choose n} + {c'-3 \choose c'-g-1}\\
    = & \; (-1)^{c'-g-3}\sum_{n=0}^{c'-g-3}(-1)^n{n+g-1 \choose n} + {c'-3 \choose c'-g-1} + {c'-3 \choose c'-g-2} - {c'-3 \choose c'-g-2}\\
    = & \; (-1)^{c'-g-3}\sum_{n=0}^{c'-g-3}(-1)^n{n+g-1 \choose n} + {c'-2 \choose c'-g-1} - {c'-3 \choose c'-g-2}\\
   = & \; (-1)^{c'-g-1}\sum_{n=0}^{c'-g-1}(-1)^n{n+g-1 \choose n},
\end{align*}
where the second equality uses the inductive hypothesis several times, the third equality follows from terms cancelling in the second and third summations, the fourth equality follows by adding zero, and the fifth equality follows from the relation ${n \choose k} = {n-1 \choose k-1} + {n-1 \choose k}$. 
\end{proof}

\begin{figure}[h]
\[\begin{tikzpicture}
%% Case 1

\draw ( 1,1.5) node{\small{Alternating}};
\draw (1,1.1) node{\small{Diagram}};

\draw (5 ,1.5) node{\small{Seifert}};
\draw (5,1.1) node{\small{State}};

\draw(-1,-1.25) rectangle (7,.75);
\draw(3,-1) node{${\scriptstyle +-+-}$};
\begin{scope}[scale=.33, yshift =  -1cm, xshift = -2.5cm]
\draw[white] (-.2,-.7) rectangle (10.2,2.7);
\draw (1,-.5) rectangle (3,2.5);
\draw (2,1) node{$R$};
\draw (8,1) node[rotate = 180]{$\overline{R}$};
\draw (7,-.5) rectangle (9,2.5);
\draw (0,0) -- (1,0);
\draw (9,2) -- (10,2);
\begin{scope}[rounded corners = .5mm]
	\draw (3,0) -- (4.3,1.3);
	\draw (3,1) -- (3.3,.7);
	\draw (3,2) -- (4,2) -- (5.3,.7);
	\draw (4.7,1.7) -- (5,2) -- (6,2) -- (7,1);
	\draw (3.7,.3) -- (4,0) -- (5,0) -- (6.3,1.3);
	\draw (6.7,1.7) -- (7,2);
	\draw (5.7,.3) -- (6,0) -- (7,0);
\end{scope}
\draw[->] (3.5,.5) -- (3.9,.9);
\draw[->] (3.7,.3) -- (3.9,.1);
\draw[->] (4.5,1.5) -- (4.1,1.9);
\draw[->] (4.7,1.7) -- (4.9,1.9);
\draw[->] (5.5,.5) -- (5.9,.9);
\draw[->] (5.3,.7) -- (5.1,.9);
\draw[->] (6.5,1.5) -- (6.9,1.1);
\draw[->] (6.7,1.7) -- (6.9,1.9);

\end{scope}

\begin{scope}[scale=.33, yshift = -1cm, xshift = 10cm]
%\draw[white] (-.2,-.7) rectangle (10.2,2.7);
\draw (1,-.5) rectangle (3,2.5);
\draw (7,-.5) rectangle (9,2.5);
\draw (0,0) -- (1,0);
\draw (9,2) -- (10,2);
\begin{scope}[rounded corners = 1mm]
	\draw[->] (3,0) -- (3.5,.4) -- (4,0) -- (5,0) -- (5.4,.5) -- (5,1) -- (4.6,1.5) -- (5,2) -- (6,2) -- (6.5,1.6) -- (7,2);
	\draw[->] (3,2) -- (4,2) -- (4.4,1.5) -- (4,1) -- (3.5,.6) -- (3,1);
	\draw[->] (7,0) -- (6,0) -- (5.6,.5) -- (6,1) -- (6.5,1.4) -- (7,1);
\end{scope}
\draw[densely dashed] (1,0) -- (3,0);
\draw[densely dashed, rounded corners=1mm] (3,2) -- (2.6,1.5) -- (3,1);
\draw[densely dashed] (7,2) -- (9,2);
\draw[densely dashed, rounded corners=1mm] (7,1) -- (7.4,.5) -- (7,0);
\end{scope}

\draw[->] (7,-.25) -- (7.5,-.25);
\draw (7.25,.25) node{\small{1}};

\begin{scope}[xshift = 8.5 cm]

\draw (.5,1.5) node{\small{Alternating}};
\draw (.5,1.1) node{\small{Diagram}};

\draw (3.5,1.5) node{\small{Seifert}};
\draw (3.5,1.1) node{\small{State}};
\draw(-1,-1.25) rectangle (5,.75);
\draw(2,-1) node{${\scriptstyle ++--}$};
\begin{scope}[scale=.33, xshift = -2cm, yshift = -1cm]
\draw (1,-.5) rectangle (3,2.5);
\draw (2,1) node{$R$};
\draw (6,1) node[rotate = 180]{$\overline{R}$};
\draw (5,-.5) rectangle (7,2.5);
\draw (0,0) -- (1,0);
\draw (7,2) -- (8,2);
\begin{scope}[rounded corners = 1mm]
	\draw (3,0) -- (4,0) -- (5,1);
	\draw (3,1) -- (3.3,1.3);
	\draw (3.7,1.7) -- (4,2) -- (5,2);
	\draw (3,2) -- (4.3,.7);
	\draw (4.7,.3) -- (5,0);
\end{scope}

\draw[->] (3.5,1.5) -- (3.1,1.9);
\draw[->] (3.7,1.7) -- (3.9,1.9);
\draw[->] (4.5,.5) -- (4.9,.9);
\draw[->] (4.3,.7) -- (4.1,.9);

\end{scope}

\begin{scope}[scale=.33, xshift = 6.5cm, yshift = -1cm]
\draw (1,-.5) rectangle (3,2.5);
\draw (5,-.5) rectangle (7,2.5);
\draw (0,0) -- (1,0);
\draw (7,2) -- (8,2);
\begin{scope}[rounded corners = 1mm]
	\draw[->] (3,1) -- (3.4,1.5) -- (3,2);
	\draw[->] (5,0) -- (4.6,.5) -- (5,1);
	\draw[->] (3,0) -- (4,0) -- (4.4,.5) -- (4,1) -- (3.6,1.5) -- (4,2) -- (5,2);
\end{scope}
\draw[densely dashed] (1,0) -- (3,0);
\draw[densely dashed, rounded corners=1mm] (3,2) -- (2.6,1.5) -- (3,1);
\draw[densely dashed] (5,2) -- (7,2);
\draw[densely dashed, rounded corners=1mm] (5,1) -- (5.4,.5) -- (5,0);
\end{scope}
\end{scope}

% Case 2

\begin{scope}[yshift=-2.5cm]

\draw(-1,-1.25) rectangle (7,.75);
\draw(3,-1) node{${\scriptstyle ++--++--}$};
\begin{scope}[scale=.33, yshift =  -1cm, xshift = -2.25cm]
%\draw[white] (-.2,-.7) rectangle (10.2,2.7);
\draw (1,-.5) rectangle (3,2.5);
\draw (2,1) node{$R$};
\draw (8,1) node[rotate = 180]{$\overline{R}$};
\draw (7,-.5) rectangle (9,2.5);
\draw (0,0) -- (1,0);
\draw (9,2) -- (10,2);
\begin{scope}[rounded corners = 1mm]
	\draw (3,0) -- (4,0) -- (5.3,1.3);
	\draw (5.7,1.7) -- (6,2) --(7,2);
	\draw (3,1) -- (3.3,1.3);
	\draw (3.7,1.7) -- (4,2) -- (5,2) -- (6.3,.7);
	\draw (6.7,.3) -- (7,0);
	\draw (3,2) -- (4.3,.7);
	\draw (4.7,.3) -- (5,0) -- (6,0) -- (7,1);
\end{scope}
\draw[->] (3.5,1.5) -- (3.9,1.1);
\draw[->] (3.7,1.7) -- (3.9,1.9);
\draw[->] (4.5,.5) -- (4.1,.1);
\draw[->] (4.7,.3) -- (4.9,.1);
\draw[->] (5.5,1.5) -- (5.9,1.1);
\draw[->] (5.3,1.3) -- (5.1,1.1);
\draw[->] (6.5,.5) -- (6.9,.9);
\draw[->] (6.7,.3) -- (6.9,.1);

\end{scope}

\begin{scope}[scale=.33, yshift = -1cm, xshift = 10cm]
\draw (1,-.5) rectangle (3,2.5);
\draw (7,-.5) rectangle (9,2.5);
\draw (0,0) -- (1,0);
\draw (9,2) -- (10,2);
\begin{scope}[rounded corners = 1mm]
	\draw[->] (3,2) -- (3.5,1.6) -- (4,2) -- (5,2) -- (5.4,1.5) -- (5,1) -- (4.6,.5) -- (5,0) -- (6,0) -- (6.5,.4) -- (7,0);
	\draw[->] (3,1) -- (3.5,1.4) -- (4,1) -- (4.4,.5) -- (4,0) -- (3,0);
	\draw[->] (7,2) -- (6,2) -- (5.6,1.5) -- (6,1) -- (6.5,.6) -- (7,1);
\end{scope}
\draw[densely dashed] (1,0) -- (3,2);
\draw[densely dashed, rounded corners=1mm] (3,1) -- (2.6,.5) -- (3,0);
\draw[densely dashed] (7,0) -- (9,2);
\draw[densely dashed, rounded corners=1mm] (7,2) -- (7.4,1.5) -- (7,1);
\end{scope}

\draw[->] (7,-.25) -- (7.5,-.25);
\draw (7.25,.25) node{\small{2}};

\begin{scope}[xshift = 8.5 cm]

\draw(-1,-1.25) rectangle (5,.75);
\draw(2,-1) node{${\scriptstyle +-}$};
\begin{scope}[scale=.33, xshift = -2cm, yshift = -1cm]
\draw[white] (-.2,-.7) rectangle (8.2,2.7);
\draw (1,-.5) rectangle (3,2.5);
\draw (2,1) node{$R$};
\draw (6,1) node[rotate = 180]{$\overline{R}$};
\draw (5,-.5) rectangle (7,2.5);
\draw (0,0) -- (1,0);
\draw (7,2) -- (8,2);
\begin{scope}[rounded corners = 1mm]
	\draw (3,0) -- (4.3,1.3);
	\draw (4.7,1.7) -- (5,2);
	\draw (3,1) -- (3.3,.7);
	\draw (3,2) -- (4,2) -- (5,1);
	\draw (3.7,.3) -- (4,0) -- (5,0);
\end{scope}

\draw[->] (3.5,.5) -- (3.1,.1);
\draw[->] (3.7,.3) -- (3.9,.1);
\draw[->] (4.5,1.5) -- (4.9,1.1);
\draw[->] (4.3,1.3) -- (4.1,1.1);
\end{scope}

\begin{scope}[scale=.33, xshift = 6.5cm, yshift = -1cm]
\draw (1,-.5) rectangle (3,2.5);
\draw (5,-.5) rectangle (7,2.5);
\draw (0,0) -- (1,0);
\draw (7,2) -- (8,2);
\begin{scope}[rounded corners = 1mm]
	\draw[->] (3,1) -- (3.4,.5) -- (3,0);
	\draw[->] (5,2) -- (4.6,1.5) -- (5,1);
	\draw[->] (3,2) -- (4,2) -- (4.4,1.5) -- (4,1) -- (3.6,.5) -- (4,0) -- (5,0);
\end{scope}
\draw[densely dashed] (1,0) -- (3,2);
\draw[densely dashed, rounded corners=1mm] (3,0) -- (2.6,.5) -- (3,1);
\draw[densely dashed] (5,0) -- (7,2);
\draw[densely dashed, rounded corners=1mm] (5,1) -- (5.4,1.5) -- (5,2);
\end{scope}
\end{scope}
\end{scope}

% Case 3A

\begin{scope}[yshift=-5cm]

\draw(-1,-1.25) rectangle (7,.75);
\draw(3,-1) node{${\scriptstyle -+--++-+}$};
\begin{scope}[scale=.33, yshift =  -1cm, xshift = -3.25cm]
\draw (1,-.5) rectangle (3,2.5);
\draw (2,1) node{$R$};
\draw (10,1) node[rotate = 180]{$\overline{R}$};
\draw (9,-.5) rectangle (11,2.5);
\draw (0.5,0) -- (1,0);
\draw (11,2) -- (11.5,2);
\begin{scope}[rounded corners = 1mm]
	\draw (3,0) -- (4,0) -- (5,1) -- (5.3,.7);
	\draw (5.7,.3) -- (6,0) -- (8,0) -- (9,1);
	\draw (3,1) -- (3.3,1.3);
	\draw (3.7,1.7) -- (4,2) -- (6,2) -- (7,1) -- (7.3,1.3);
	\draw (7.7,1.7) -- (8,2) -- (9,2);
	\draw (3,2) -- (4.3,.7);
	\draw (4.7,.3) -- (5,0) -- (6.3,1.3);
	\draw (6.7,1.7) -- (7,2) -- (8.3,.7);
	\draw (8.7,.3) -- (9,0);
\end{scope}
\draw[->] (3.5,1.5) -- (3.1,1.9);
\draw[->] (3.7,1.7) -- (3.9,1.9);
\draw[->] (4.5,.5) -- (4.9,.9);
\draw[->] (4.3,.7) -- (4.1,.9);
\draw[->] (5.5,.5) -- (5.1,.1);
\draw[->] (5.7,.3) -- (5.9,.1);
\draw[->] (6.5,1.5) --(6.9,1.1);
\draw[->] (6.3,1.3) -- (6.1,1.1);
\draw[->] (7.5,1.5) -- (7.1,1.9);
\draw[->] (7.7,1.7) -- (7.9,1.9);
\draw[->] (8.5,.5) -- (8.9,.9);
\draw[->] (8.3,.7) -- (8.1,.9);
\end{scope}

\begin{scope}[scale=.33, yshift = -1cm, xshift = 9cm]
%\draw[white] (-.2,-.7) rectangle (12.2,2.7);
\draw (1,-.5) rectangle (3,2.5);
\draw (9,-.5) rectangle (11,2.5);
\draw (0.5,0) -- (1,0);
\draw (11,2) -- (11.5,2);
\begin{scope}[rounded corners = 1mm]
\draw[->] (3,0) -- (4,0) -- (4.4,.5) -- (4,1) -- (3.6,1.5) -- (4,2) -- (6,2) -- (6.4,1.5) -- (6,1) --(5.6,.5) -- (6,0) -- (8,0) -- (8.4,.5) -- (8,1) -- (7.6,1.5) -- (8,2) -- (9,2);
\draw[->] (3,1) -- (3.4,1.5) -- (3,2);
\draw[->] (9,0) -- (8.6,.5) -- (9,1);
\draw[->] (5,1) arc (90:-270:.4cm and .5cm);
\draw[->] (7,2) arc (90:450:.4cm and .5cm);
\end{scope}
\draw[densely dashed] (1,0) -- (3,0);
\draw[densely dashed, rounded corners =1mm] (3,1) -- (2.6,1.5) -- (3,2);
\draw[densely dashed] (9,2) -- (11,2);
\draw[densely dashed, rounded corners =1mm] (9,1) -- (9.4,.5) -- (9,0);
\end{scope}

\draw[->] (7,-.25) -- (7.5,-.25);
\draw (7.25,.25) node{\small{3A}};

\begin{scope}[xshift = 8.5 cm]

\draw(-1,-1.25) rectangle (5,.75);
\draw(2,-1) node{${\scriptstyle -+}$};
\begin{scope}[scale=.33, xshift = -2cm, yshift = -1cm]
\draw (1,-.5) rectangle (3,2.5);
\draw (2,1) node{$R$};
\draw (6,1) node[rotate = 180]{$\overline{R}$};
\draw (5,-.5) rectangle (7,2.5);
\draw (0,0) -- (1,0);
\draw (7,2) -- (8,2);
\begin{scope}[rounded corners = 1mm]
	\draw (3,0) -- (4,0) -- (5,1);
	\draw (3,1) -- (3.3,1.3);
	\draw (3.7,1.7) -- (4,2) -- (5,2);
	\draw (3,2) -- (4.3,.7);
	\draw (4.7,.3) -- (5,0);
\end{scope}

\draw[->] (3.5,1.5) -- (3.1,1.9);
\draw[->] (3.7,1.7) -- (3.9,1.9);
\draw[->] (4.5,.5) -- (4.9,.9);
\draw[->] (4.3,.7) -- (4.1,.9);

\end{scope}

\begin{scope}[scale=.33, xshift = 6.5cm, yshift = -1cm]
\draw (1,-.5) rectangle (3,2.5);
\draw (5,-.5) rectangle (7,2.5);
\draw (0,0) -- (1,0);
\draw (7,2) -- (8,2);
\begin{scope}[rounded corners = 1mm]
	\draw[->] (3,1) -- (3.4,1.5) -- (3,2);
	\draw[->] (5,0) -- (4.6,.5) -- (5,1);
	\draw[->] (3,0) -- (4,0) -- (4.4,.5) -- (4,1) -- (3.6,1.5) -- (4,2) -- (5,2);
\end{scope}
\draw[densely dashed] (1,0) -- (3,0);
\draw[densely dashed, rounded corners=1mm] (3,2) -- (2.6,1.5) -- (3,1);
\draw[densely dashed] (5,2) -- (7,2);
\draw[densely dashed, rounded corners=1mm] (5,1) -- (5.4,.5) -- (5,0);
\end{scope}
\end{scope}
\end{scope}

%% Case 3B

\begin{scope}[yshift=-7.5cm]

\draw(-1,-1.25) rectangle (7,.75);
\draw(3,-1) node{${\scriptstyle --+--++-++}$};
\begin{scope}[scale=.33, yshift =  -1cm, xshift = -3.25cm]
%\draw[white] (-.2,-.7) rectangle (12.2,2.7);
\draw (1,-.5) rectangle (3,2.5);
\draw (2,1) node{$R$};
\draw (10,1) node[rotate = 180]{$\overline{R}$};
\draw (9,-.5) rectangle (11,2.5);
\draw (.5,0) -- (1,0);
\draw (11,2) -- (11.5,2);
\begin{scope}[rounded corners = 1mm]
	\draw (3,0) -- (4,1) -- (4.3,.7);
	\draw (3.7,.3) -- (4,0) -- (5,1) -- (5.3,.7);
	\draw (5.7,.3) -- (6,0) -- (9,0);
	\draw (3,1) -- (3.3,.7);
	\draw (7.7,1.7) -- (8,2) -- (9,1);
	\draw (3,2) -- (6,2) -- (7,1) -- (7.3,1.3);
	\draw (4.7,.3) -- (5,0) -- (6.3,1.3);
	\draw (6.7,1.7) -- (7,2) -- (8,1) -- (8.3,1.3);
	\draw (8.7,1.7) -- (9,2);
\end{scope}
\draw[->] (3.5,.5) -- (3.1,.1);
\draw[->] (3.7,.3) -- (3.9,.1);
\draw[->] (4.5,.5) -- (4.9,.9);
\draw[->] (4.3,.7) -- (4.1,.9);
\draw[->] (5.5,.5) -- (5.1,.1);
\draw[->] (5.7,.3) -- (5.9,.1);
\draw[->] (6.5,1.5) --(6.9,1.1);
\draw[->] (6.3,1.3) -- (6.1,1.1);
\draw[->] (7.5,1.5) -- (7.1,1.9);
\draw[->] (7.7,1.7) -- (7.9,1.9);
\draw[->] (8.5,1.5) -- (8.9,1.1);
\draw[->] (8.3,1.3) -- (8.1,1.1);

\end{scope}

\begin{scope}[scale=.33, yshift = -1cm, xshift = 9cm]
%\draw[white] (-.2,-.7) rectangle (12.2,2.7);
\draw (1,-.5) rectangle (3,2.5);
\draw (9,-.5) rectangle (11,2.5);
\draw (0.5,0) -- (1,0);
\draw (11,2) -- (11.5,2);
\begin{scope}[rounded corners = 1mm]
\draw[->] (3,2) -- (6,2) -- (6.4,1.5) -- (6,1) -- (5.6,.5) -- (6,0) -- (9,0);
\draw[->] (3,1) -- (3.4,.5) -- (3,0);
\draw[->] (9,2) -- (8.6,1.5) -- (9,1);
\draw[->] (5,1) arc (90:-270:.4cm and .5cm);
\draw[->] (4,1) arc (90:450:.4cm and .5cm);
\draw[->] (7,2) arc (90:450:.4cm and .5cm);
\draw[->] (8,2) arc (90:-270:.4cm and .5cm);
\end{scope}
\draw[densely dashed] (1,0) -- (3,2);
\draw[densely dashed, rounded corners =1mm] (3,0) -- (2.6,.5) -- (3,1);
\draw[densely dashed] (9,0) -- (11,2);
\draw[densely dashed, rounded corners =1mm] (9,1) -- (9.4,1.5) -- (9,2);
\end{scope}

\draw[->] (7,-.25) -- (7.5,-.25);
\draw (7.25,.25) node{\small{3B}};

\begin{scope}[xshift = 8.5 cm]

\draw(-1,-1.25) rectangle (5,.75);
\draw(2,-1) node{${\scriptstyle --++}$};
\begin{scope}[scale=.33, xshift = -2cm, yshift = -1cm]
\draw[white] (-.2,-.7) rectangle (8.2,2.7);
\draw (1,-.5) rectangle (3,2.5);
\draw (2,1) node{$R$};
\draw (6,1) node[rotate = 180]{$\overline{R}$};
\draw (5,-.5) rectangle (7,2.5);
\draw (0,0) -- (1,0);
\draw (7,2) -- (8,2);
\begin{scope}[rounded corners = 1mm]
	\draw (3,0) -- (4.3,1.3);
	\draw (4.7,1.7) -- (5,2);
	\draw (3,1) -- (3.3,.7);
	\draw (3,2) -- (4,2) -- (5,1);
	\draw (3.7,.3) -- (4,0) -- (5,0);
\end{scope}

\draw[->] (3.5,.5) -- (3.1,.1);
\draw[->] (3.7,.3) -- (3.9,.1);
\draw[->] (4.5,1.5) -- (4.9,1.1);
\draw[->] (4.3,1.3) -- (4.1,1.1);
\end{scope}

\begin{scope}[scale=.33, xshift = 6.5cm, yshift = -1cm]
\draw (1,-.5) rectangle (3,2.5);
\draw (5,-.5) rectangle (7,2.5);
\draw (0,0) -- (1,0);
\draw (7,2) -- (8,2);
\begin{scope}[rounded corners = 1mm]
	\draw[->] (3,1) -- (3.4,.5) -- (3,0);
	\draw[->] (5,2) -- (4.6,1.5) -- (5,1);
	\draw[->] (3,2) -- (4,2) -- (4.4,1.5) -- (4,1) -- (3.6,.5) -- (4,0) -- (5,0);
\end{scope}
\draw[densely dashed] (1,0) -- (3,2);
\draw[densely dashed, rounded corners=1mm] (3,0) -- (2.6,.5) -- (3,1);
\draw[densely dashed] (5,0) -- (7,2);
\draw[densely dashed, rounded corners=1mm] (5,1) -- (5.4,1.5) -- (5,2);
\end{scope}
\end{scope}
\end{scope}

% Case 4A

\begin{scope}[yshift=-10cm]

\draw(-1,-1.25) rectangle (7,.75);
\draw(3,-1) node{${\scriptstyle -++-+--+}$};
\begin{scope}[scale=.33, yshift =  -1cm, xshift = -3.25cm]
%\draw[white] (-.2,-.7) rectangle (12.2,2.7);
\draw (1,-.5) rectangle (3,2.5);
\draw (2,1) node{$R$};
\draw (10,1) node[rotate = 180]{$\overline{R}$};
\draw (9,-.5) rectangle (11,2.5);
\draw (0.5,0) -- (1,0);
\draw (11,2) -- (11.5,2);
\begin{scope}[rounded corners = 1mm]
	\draw (3,0) -- (6,0) -- (7,1) -- (7.3,.7);
	\draw (7.7,.3) -- (8,0) -- (9,1);
	\draw (3,1) -- (3.3,1.3);
	\draw (3.7,1.7) -- (4,2) -- (5,1) -- (5.3,1.3);
	\draw (5.7,1.7) -- (6,2) -- (9,2);
	\draw (3,2) -- (4,1) -- (4.3,1.3);
	\draw (4.7,1.7) -- (5,2) -- (6.3,.7);
	\draw (6.7,.3) -- (7,0) -- (8,1) -- (8.3,.7);
	\draw (8.7,.3) -- (9,0);
\end{scope}
\draw[->] (3.5,1.5) -- (3.1,1.9);
\draw[->] (3.7,1.7) -- (3.9,1.9);
\draw[->] (4.5,1.5) -- (4.9,1.1);
\draw[->] (4.3,1.3) -- (4.1,1.1);
\draw[->] (5.5,1.5) -- (5.1,1.9);
\draw[->] (5.7,1.7) -- (5.9,1.9);
\draw[->] (6.5,.5) --(6.9,.9);
\draw[->] (6.3,.7) -- (6.1,.9);
\draw[->] (7.5,.5) -- (7.1,.1);
\draw[->] (7.7,.3) -- (7.9,.1);
\draw[->] (8.5,.5) -- (8.9,.9);
\draw[->] (8.3,.7) -- (8.1,.9);
\end{scope}

\begin{scope}[scale=.33, yshift = -1cm, xshift = 9cm]
%\draw[white] (-.2,-.7) rectangle (12.2,2.7);
\draw (1,-.5) rectangle (3,2.5);
\draw (9,-.5) rectangle (11,2.5);
\draw (0.5,0) -- (1,0);
\draw (11,2) -- (11.5,2);
\begin{scope}[rounded corners = 1mm]
\draw[->] (3,0) -- (6,0) -- (6.4,.5) -- (6,1) -- (5.6,1.5) -- (6,2) -- (9,2);
\draw[->] (3,1) -- (3.4,1.5) -- (3,2);
\draw[->] (9,0) -- (8.6,.5) -- (9,1);
\draw[->] (5,2) arc (90:450:.4cm and .5cm);
\draw[->] (4,2) arc (90:-270:.4cm and .5cm);
\draw[->] (7,1) arc (90:-270:.4cm and .5cm);
\draw[->] (8,1) arc (90:450:.4cm and .5cm);
\end{scope}
\draw[densely dashed] (1,0) -- (3,0);
\draw[densely dashed, rounded corners =1mm] (3,1) -- (2.6,1.5) -- (3,2);
\draw[densely dashed] (9,2) -- (11,2);
\draw[densely dashed, rounded corners =1mm] (9,1) -- (9.4,.5) -- (9,0);
\end{scope}

\draw[->] (7,-.25) -- (7.5,-.25);
\draw (7.25,.25) node{\small{4A}};

\begin{scope}[xshift = 8.5 cm]

\draw(-1,-1.25) rectangle (5,.75);
\draw(2,-1) node{${\scriptstyle -+}$};
\begin{scope}[scale=.33, xshift = -2cm, yshift = -1cm]
\draw (1,-.5) rectangle (3,2.5);
\draw (2,1) node{$R$};
\draw (6,1) node[rotate = 180]{$\overline{R}$};
\draw (5,-.5) rectangle (7,2.5);
\draw (0,0) -- (1,0);
\draw (7,2) -- (8,2);
\begin{scope}[rounded corners = 1mm]
	\draw (3,0) -- (4,0) -- (5,1);
	\draw (3,1) -- (3.3,1.3);
	\draw (3.7,1.7) -- (4,2) -- (5,2);
	\draw (3,2) -- (4.3,.7);
	\draw (4.7,.3) -- (5,0);
\end{scope}

\draw[->] (3.5,1.5) -- (3.1,1.9);
\draw[->] (3.7,1.7) -- (3.9,1.9);
\draw[->] (4.5,.5) -- (4.9,.9);
\draw[->] (4.3,.7) -- (4.1,.9);

\end{scope}

\begin{scope}[scale=.33, xshift = 6.5cm, yshift = -1cm]
\draw (1,-.5) rectangle (3,2.5);
\draw (5,-.5) rectangle (7,2.5);
\draw (0,0) -- (1,0);
\draw (7,2) -- (8,2);
\begin{scope}[rounded corners = 1mm]
	\draw[->] (3,1) -- (3.4,1.5) -- (3,2);
	\draw[->] (5,0) -- (4.6,.5) -- (5,1);
	\draw[->] (3,0) -- (4,0) -- (4.4,.5) -- (4,1) -- (3.6,1.5) -- (4,2) -- (5,2);
\end{scope}
\draw[densely dashed] (1,0) -- (3,0);
\draw[densely dashed, rounded corners=1mm] (3,2) -- (2.6,1.5) -- (3,1);
\draw[densely dashed] (5,2) -- (7,2);
\draw[densely dashed, rounded corners=1mm] (5,1) -- (5.4,.5) -- (5,0);
\end{scope}
\end{scope}
\end{scope}

%% Case 4B

\begin{scope}[yshift=-12.5cm]

\draw(-1,-1.25) rectangle (7,.75);
\draw(3,-1) node{${\scriptstyle --++-+--++}$};
\begin{scope}[scale=.33, yshift =  -1cm, xshift = -3.25cm]
%\draw[white] (-.2,-.7) rectangle (12.2,2.7);
\draw (1,-.5) rectangle (3,2.5);
\draw (2,1) node{$R$};
\draw (10,1) node[rotate = 180]{$\overline{R}$};
\draw (9,-.5) rectangle (11,2.5);
\draw (0.5,0) -- (1,0);
\draw (11,2) -- (11.5,2);
\begin{scope}[rounded corners = 1mm]
	\draw (3,0) -- (4.3,1.3);
	\draw (4.7,1.7) -- (5,2) -- (6.3,.7);
	\draw (6.7,.3) -- (7,0) -- (8.3,1.3);
	\draw (8.7,1.7) -- (9,2);
	\draw (3,1) -- (3.3,.7);
	\draw (3.7,.3) -- (4,0) -- (6,0) -- (7,1) -- (7.3,.7);
	\draw (7.7,.3) -- (8,0) -- (9,0);
	\draw (3,2) -- (4,2) -- (5,1) -- (5.3,1.3);
	\draw (5.7,1.7) -- (6,2) -- (8,2) -- (9,1);
\end{scope}
\draw[->] (3.5,.5) -- (3.1,.1);
\draw[->] (3.7,.3) -- (3.9,.1);
\draw[->] (4.5,1.5) -- (4.9,1.1);
\draw[->] (4.3,1.3) -- (4.1,1.1);
\draw[->] (5.5,1.5) -- (5.1,1.9);
\draw[->] (5.7,1.7) -- (5.9,1.9);
\draw[->] (6.5,.5) --(6.9,.9);
\draw[->] (6.3,.7) -- (6.1,.9);
\draw[->] (7.5,.5) -- (7.1,.1);
\draw[->] (7.7,.3) -- (7.9,.1);
\draw[->] (8.5,1.5) -- (8.9,1.1);
\draw[->] (8.3,1.3) -- (8.1,1.1);

\end{scope}

\begin{scope}[scale=.33, yshift = -1cm, xshift = 9cm]
%\draw[white] (-.2,-.7) rectangle (12.2,2.7);
\draw (1,-.5) rectangle (3,2.5);
\draw (9,-.5) rectangle (11,2.5);
\draw (0.5,0) -- (1,0);
\draw (11,2) -- (11.5,2);
\begin{scope}[rounded corners = 1mm]
\draw[->] (3,2) -- (4,2) -- (4.4,1.5) -- (4,1) -- (3.6,.5) -- (4,0) -- (6,0) -- (6.4,.5) -- (6,1) -- (5.6,1.5) -- (6,2) -- (8,2) -- (8.4,1.5) -- (8,1) -- (7.6,.5) -- (8,0) -- (9,0);
\draw[->] (3,1) -- (3.4,.5) -- (3,0);
\draw[->] (9,2) -- (8.6,1.5) -- (9,1);
\draw[->] (5,2) arc (90:450:.4cm and .5cm);
\draw[->] (7,1) arc (90:-270:.4cm and .5cm);
\end{scope}
\draw[densely dashed] (1,0) -- (3,2);
\draw[densely dashed, rounded corners =1mm] (3,0) -- (2.6,.5) -- (3,1);
\draw[densely dashed] (9,0) -- (11,2);
\draw[densely dashed, rounded corners =1mm] (9,1) -- (9.4,1.5) -- (9,2);
\end{scope}

\draw[->] (7,-.25) -- (7.5,-.25);
\draw (7.25,.25) node{\small{4B}};

\begin{scope}[xshift = 8.5 cm]

\draw(-1,-1.25) rectangle (5,.75);
\draw(2,-1) node{${\scriptstyle --++}$};
\begin{scope}[scale=.33, xshift = -2cm, yshift = -1cm]
\draw[white] (-.2,-.7) rectangle (8.2,2.7);
\draw (1,-.5) rectangle (3,2.5);
\draw (2,1) node{$R$};
\draw (6,1) node[rotate = 180]{$\overline{R}$};
\draw (5,-.5) rectangle (7,2.5);
\draw (0,0) -- (1,0);
\draw (7,2) -- (8,2);
\begin{scope}[rounded corners = 1mm]
	\draw (3,0) -- (4.3,1.3);
	\draw (4.7,1.7) -- (5,2);
	\draw (3,1) -- (3.3,.7);
	\draw (3,2) -- (4,2) -- (5,1);
	\draw (3.7,.3) -- (4,0) -- (5,0);
\end{scope}

\draw[->] (3.5,.5) -- (3.1,.1);
\draw[->] (3.7,.3) -- (3.9,.1);
\draw[->] (4.5,1.5) -- (4.9,1.1);
\draw[->] (4.3,1.3) -- (4.1,1.1);
\end{scope}

\begin{scope}[scale=.33, xshift = 6.5cm, yshift = -1cm]
\draw (1,-.5) rectangle (3,2.5);
\draw (5,-.5) rectangle (7,2.5);
\draw (0,0) -- (1,0);
\draw (7,2) -- (8,2);
\begin{scope}[rounded corners = 1mm]
	\draw[->] (3,1) -- (3.4,.5) -- (3,0);
	\draw[->] (5,2) -- (4.6,1.5) -- (5,1);
	\draw[->] (3,2) -- (4,2) -- (4.4,1.5) -- (4,1) -- (3.6,.5) -- (4,0) -- (5,0);
\end{scope}
\draw[densely dashed] (1,0) -- (3,2);
\draw[densely dashed, rounded corners=1mm] (3,0) -- (2.6,.5) -- (3,1);
\draw[densely dashed] (5,0) -- (7,2);
\draw[densely dashed, rounded corners=1mm] (5,1) -- (5.4,1.5) -- (5,2);
\end{scope}
\end{scope}
\end{scope}

\end{tikzpicture}\]

    \caption{Center strings, alternating diagrams and partial Seifert states corresponding to the cases in Lemma \ref{lemma:tprecur}. The center strings appear along the bottom of each rectangle, and the case number appears above each arrow.}
    \label{fig:tp}
\end{figure}
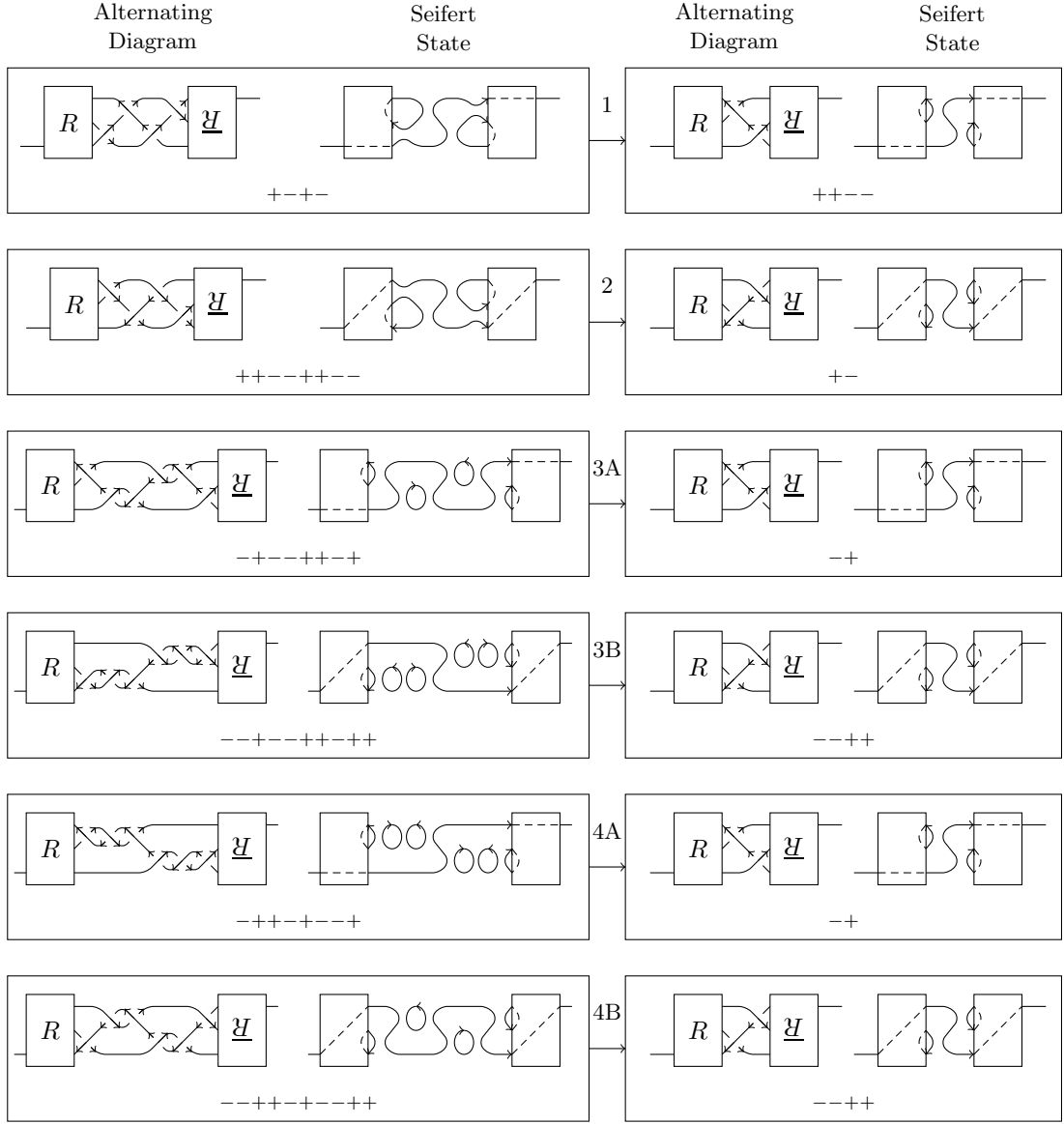

\subsection{Other formulas} Combining Lemmas \ref{lemma:tcg} and \ref{lemma:tp(c,g)} yields a proof of Theorem \ref{thm:tbarformula}.

\begin{proof}[Proof of Theorem \ref{thm:tbarformula}]
Theorem \ref{thm:list} implies that $t(c,g)+t_p(c,g)=2\bar{t}(c,g)$, twice the number of $2$-bridge knots with crossing number $c$ and genus $g$. Therefore Lemmas \ref{lemma:tcg} and \ref{lemma:tp(c,g)} imply
\[ \bar{t}(c,g) = \frac{1}{2}\left((-1)^{c'-g-1} \sum_{n=0}^{c'-g-1}(-1)^n{n+g-1\choose n} + (-1)^{c-1}\sum_{n=0}^{c-2g-1}(-1)^{n}{n+2g-1 \choose n}\right).\]
\end{proof}

Table \ref{tab:tbar} provides the number $\bar{t}(c,g)$ of $2$-bridge knots with crossing number $c$ and genus $g$ up to crossing number 20 using the formula from Theorem \ref{thm:tbarformula}.

\begin{center}
\begin{table}[h]
%\resizebox{\columnwidth}{!}{%
\begin{tabular}{|c|c c c c c c c c c|}
\hline
$c \setminus g$ & $1$ & $2$ & $3$ & $4$ & $5$ & $6$ & $7$ & $8$ & $9$\\ [0.5ex] 
 \hline\hline
 3 & 1 &  &  &  &  &  &  &  &  \\ 
 \hline
  4 & 1 &  &  &  &  &  &  &  &  \\
\hline
 5 & 1 & 1 &  &  &  &  &  &  &  \\
 \hline
 6 & 1 & 2 &  &  &  &  &  &  &  \\
 \hline
 7 & 2 & 4 & 1 &  &  &  &  &  &  \\
 \hline
 8 & 2 & 7 & 3 &  &  &  &  &  &  \\
 \hline
 9 & 2 & 12 & 9 & 1 &  &  &  &  &  \\
 \hline
 10 & 2 & 18 & 21 & 4 &  &  &  &  &  \\
 \hline
 11 & 3 & 26 & 45 & 16 & 1 &  &  &  &  \\
 \hline
 12 & 3 & 36 & 85 & 47 & 5 &  &  &  &  \\
 \hline
 13 & 3 & 49 & 151 & 123 & 25 & 1 &  &  &  \\
 \hline
 14 & 3 & 64 & 251 & 280 & 89 & 6 &  &  &  \\
 \hline
 15 & 4 & 82 & 400 & 588 & 276 & 36 & 1 &  &  \\
 \hline
 16 & 4 & 103 & 610 & 1141 & 736 & 151 & 7 &  &  \\
 \hline
 17 & 4 & 128 & 904 & 2094 & 1784 & 542 & 49 & 1 &  \\
 \hline
 18 & 4 & 156 & 1294 & 3648 & 3960 & 1658 & 237 & 8 &  \\
 \hline
 19 & 5 & 188 & 1814 & 6104 & 8230 & 4558 & 967 & 64 & 1 \\
 \hline
 20 & 5 & 224 & 2486 & 9842 & 16126 & 11394 & 3339 & 351 & 9 \\
 \hline
\end{tabular}
%}
\caption{The $\bar{t}(c,g)$ values up to crossing number 20 as determined using the formula  from Theorem \ref{thm:tbarformula}.}
\label{tab:tbar}
\end{table}
\end{center}

In order to find the median, mode, and variance of the genus of 2-bridge knots in Sections \ref{sec:Mode} and
\ref{sec:variance}, we use the following additional facts about $t(c,g)$, $t_p(c,g)$, and $\bar t(c,g)$.

\newpage

\begin{theorem}
\label{thm:3formulas}
The following relationships between $t(c,g)$, $t_p(c,g)$, and $\bar t(c,g)$ hold.
\begin{enumerate}
    \item If $c\geq5$ and $g \geq 2$, then $t_p(c,g) = t_p(c-2,g) + t_p(c-2,g-1)$.
    \item If $c\geq 3$, then $t_p(2c,2g) = t(c,g)$.
    \item If $c\geq 5$ and $g\geq 2$, then $\bar t(c,g) = \bar t(c-2,g)+\bar t(c-2,g-1)+t_p(2c-4,2g-1)$.
\end{enumerate}
    
\end{theorem}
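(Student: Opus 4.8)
The three parts rest on the closed formulas of Lemmas \ref{lemma:tcg} and \ref{lemma:tp(c,g)} together with the identity $2\bar t(c,g) = t(c,g) + t_p(c,g)$ established in the proof of Theorem \ref{thm:tbarformula}. The plan is therefore largely bookkeeping, and the only genuine point of care is tracking, at each step, the ranges of $c$ and $g$ on which the relevant identity is valid.

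\textbf{Part (1).} I would argue by induction on $c$, using the three-term recurrence of Lemma \ref{lemma:tprecur} together with Corollary \ref{t_pevenandoddsame}, which lets me handle the even and odd cases simultaneously since $t_p(c,g)$ depends only on $c'$. The base cases $c=5$ and $c=6$ with $g\geq 2$ follow from the values tabulated in the proof of Lemma \ref{lemma:tp(c,g)}. For the inductive step with $c\geq 7$ and $g\geq 2$, Lemma \ref{lemma:tprecur} gives $t_p(c,g) = t_p(c-2,g-1) + t_p(c-4,g) + t_p(c-4,g-1)$, and applying the inductive hypothesis at crossing number $c-2$ (legitimate since $c-2\geq 5$ and $g\geq 2$) rewrites $t_p(c-4,g)+t_p(c-4,g-1)$ as $t_p(c-2,g)$, which yields the claim. (Alternatively one can derive this directly from the closed form of Lemma \ref{lemma:tp(c,g)} via $\binom{n+g-1}{n} = \binom{n+g-2}{n} + \binom{n+g-2}{n-1}$ and a reindexing; I would include whichever proof is shorter.)

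\textbf{Part (2).} This is immediate from the two closed formulas. Since $2c$ is even, Lemma \ref{lemma:tp(c,g)} gives $t_p(2c,2g) = (-1)^{c-2g-1}\sum_{n=0}^{c-2g-1}(-1)^n\binom{n+2g-1}{n}$, while Lemma \ref{lemma:tcg} gives $t(c,g) = (-1)^{c-1}\sum_{n=0}^{c-2g-1}(-1)^n\binom{n+2g-1}{n}$; these agree because $(-1)^{c-2g-1}=(-1)^{c-1}$.

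\textbf{Part (3).} Applying $2\bar t(\cdot,\cdot) = t(\cdot,\cdot)+t_p(\cdot,\cdot)$ at $(c,g)$, $(c-2,g)$, and $(c-2,g-1)$ and subtracting gives
\[2\big(\bar t(c,g) - \bar t(c-2,g) - \bar t(c-2,g-1)\big) = \big(t(c,g)-t(c-2,g)-t(c-2,g-1)\big) + \big(t_p(c,g)-t_p(c-2,g)-t_p(c-2,g-1)\big).\]
The second parenthesized term vanishes by part (1) (using $c\geq 5$, $g\geq 2$). For the first, part (2) converts it into $t_p(2c,2g) - t_p(2c-4,2g) - t_p(2c-4,2g-2)$, and three applications of part (1) expanding $t_p(2c,2g)$ down to level $2c-4$ collapse this to $2\,t_p(2c-4,2g-1)$; dividing by $2$ gives the stated recursion. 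The main obstacle, such as it is, is checking that every invocation of parts (1) and (2) in this chain falls inside their hypotheses — in particular that $g-1\geq 1$ and $2g-1\geq 2$ hold when $g\geq 2$, and that each crossing-number argument ($2c$, $2c-2$, $2c-4$) is at least $5$ when $c\geq 5$ — which I would verify explicitly before combining the identities.
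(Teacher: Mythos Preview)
Your proposal is correct, and for parts (1) and (3) it is in fact cleaner than what the paper does; part (2) is identical to the paper's argument.

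For part (1), the paper argues by induction but via the closed summation formula of Lemma~\ref{lemma:tp(c,g)}, manipulating the sums directly (separating final terms, applying $\binom{n}{k}=\binom{n-1}{k}+\binom{n-1}{k-1}$, and invoking the inductive hypothesis on the rewritten sum). Your route---a bare induction on $c$ using only the three-term recurrence of Lemma~\ref{lemma:tprecur}---sidesteps the closed formula entirely and is shorter. Both are valid; yours has the advantage of not depending on Lemma~\ref{lemma:tp(c,g)} at all.

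For part (3), the paper expands $t(c,g)$ by the five-term recurrence of Lemma~\ref{lemma:trecur}, regroups to isolate $\bar t(c-2,g)+\bar t(c-2,g-1)$, and is then left with the residual
\[
t(c-1,g)+t(c-3,g-1)-t(c-3,g),
\]
which it shows equals $2\,t_p(2c-4,2g-1)$ via a direct binomial-sum computation using Lemma~\ref{lemma:tcg}. Your approach instead converts the entire $t$-piece to $t_p$ via part (2) and then telescopes with three applications of part (1):
\[
t_p(2c,2g)=t_p(2c-4,2g)+2\,t_p(2c-4,2g-1)+t_p(2c-4,2g-2),
\]
which immediately gives $t(c,g)-t(c-2,g)-t(c-2,g-1)=2\,t_p(2c-4,2g-1)$. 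This avoids both Lemma~\ref{lemma:trecur} and the closed formula for $t(c,g)$, and your check of the hypotheses (in particular $2g-1\geq 2$ and $2c-2\geq 5$) is exactly what is needed. The paper's route is more computational but has the minor by-product of exhibiting the identity $t(c-1,g)+t(c-3,g-1)-t(c-3,g)=2\,t_p(2c-4,2g-1)$ explicitly; yours is structurally simpler.
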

\begin{proof}
    In order to prove statement (1), we proceed by induction using the formula from Lemma \ref{lemma:tp(c,g)}\[t_p(c,g)=(-1)^{c'-g-1}\sum_{n=0}^{c'-g-1}(-1)^n{n+g-1 \choose n},\] where $c'=\frac{c}{2}$ if $c$ is even $c'=\frac{c+1}{2}$ if c is odd.

%{\blue See note below Lemma 3.6 about $c'$ being different and about maybe using the same wording.}

For the base cases, the statement holds when $c=5$ and $g>1$, as direct computation confirms that
\begin{align*}
    t_p(5,2) = & \; 1 = 0 + 1 = t_p(3,2)+t_p(3,1),\\
    t_p(5,k) = & \; 0 = 0 + 0 = t_p(3,k) + t_p(3,k-1)~\text{for $k\geq 3$},\\
    t_p(6,2) = & \; 1 = 0 + 1 = t_p(4,2) + t_p(4,1),~\text{and}\\
    t_p(6,k) = & \; 0 = 0 + 0 = t_p(4,k) + t_p(4,k-1)~\text{for $k\geq 3$}.
    \end{align*}

%We know $t_p(5,2) = t_p(3,2) + t_p(3,1)$ and $t_p(5,3) = t_p(3,3) + t_p(3,2) = 0$. We also know $t_p(6,2) = t_p(4,2) + t_p(4,1)$ and $t_p(6,3) = t_p(4,3) + t_p(4,2) = 0$. 
For the inductive step, assume this identity holds for $c'-1$, that is, assume
\begin{align*}
(-1)^{c'-g-2}\sum_{n=0}^{c'-g-2}(-1)^n{n+g-1 \choose n} = &\; (-1)^{c'-g-3}\sum_{n=0}^{c'-g-3}(-1)^n{n+g-1 \choose n}\\
&\; + 
(-1)^{c'-g-2}\sum_{n=0}^{c'-g-2}(-1)^n{n+g-2 \choose n}
\end{align*}
for all $g>1$. Then
\begin{align*}
t_p(c,g) = & \; t_p(c-2,g) + t_p(c-2,g-1) \\
= &\; (-1)^{c'-g-2}\sum_{n=0}^{c'-g-2}(-1)^n{n+g-1 \choose n} + 
(-1)^{c'-g-1}\sum_{n=0}^{c'-g-1}(-1)^n{n+g-2 \choose n} \\
 = & \; (-1)^{c'-g-2}\sum_{n=0}^{c'-g-3}(-1)^n{n+g-1 \choose n} +{c'-3 \choose c'-g-2}\\
 & \; + (-1)^{c'-g-1}\sum_{n=0}^{c'-g-2}(-1)^n{n+g-2 \choose n} +{c'-3 \choose c'-g-1}\\
 = & \;-\left((-1)^{c'-g-3}\sum_{n=0}^{c'-g-3}(-1)^n{n+g-1 \choose n} + (-1)^{c'-g-2}\sum_{n=0}^{c'-g-2}(-1)^n{n+g-2 \choose n}\right) \\ 
& \; + {c'-3 \choose c'-g-1} + {c'-3 \choose c'-g-2}\\
= &\; (-1)^{c'-g-1}\sum_{n=0}^{c'-g-2}(-1)^n{n+g-1 \choose n} +  {c'-2 \choose c-g-1}\\
 = & \; (-1)^{c'-g-1}\sum_{n=0}^{c'-g-1}(-1)^n{n+g-1 \choose n},
\end{align*}
where the fifth equality follows from the inductive hypothesis and the relation ${n \choose k} = {n-1 \choose k-1} + {n-1 \choose k}$.

Statement (2) follows the formulas for $t(c,g)$ and $t_p(c,g)$ in Lemmas \ref{lemma:tcg} and \ref{lemma:tp(c,g)} via
\[t_p\left(2c,2g\right) = \left(-1\right)^{\frac{2c}{2}-2g-1}\sum_{n=0}^{\frac{2c}{2}-2g-1}\left(-1\right)^n{n+2g-1 \choose n} = \left(-1\right)^{c-1}\sum_{n=0}^{c-2g-1}\left(-1\right)^n{n+2g-1 \choose n} = t\left(c,g\right).\]

%{\blue After the second = sign, $(-1)^{c-1}$ to match the Lemma for t(c,g)?}

To prove statement (3), we express $\bar t(c,g)$ as $\bar t(c,g) =  \frac{1}{2} [t(c,g)+t_p(c,g)]$ and use the recursive formulas for $t(c,g)$ and $t_p(c,g)$ from Lemma \ref{lemma:trecur} and part (1) of this theorem. When $g\neq 1$, we have
\begin{align*}
\bar t(c,g) = & \; \frac{1}{2} [t(c-1,g)+t(c-2,g)+t(c-2,g-1)+t(c-3,g-1) -t(c-3,g)\\
& \;+t_p(c-2,g)+t_p(c-2,g-1)]\\
 =  &\;  \; \bar t(c-2,g)+\bar t(c-2,g-1) +  \frac{1}{2}[t(c-1,g)+t(c-3,g-1)-t(c-3,g)].
 \end{align*}

We now only have to show $t(c-1,g)+t(c-3,g-1)-t(c-3,g) = 2t_p(2c-4,2g-1)$. Lemma \ref{lemma:tcg} implies that
\begin{comment}
\begin{align*}
&t(c-1,g)+t(c-3,g-1)-t(c-3,g)\\
= & \; (-1)^{c-2}\sum_{n=0}^{c-2g-2}(-1)^n{n+2g-1 \choose n} + (-1)^{c-4}\sum_{n=0}^{c-2g-2}(-1)^n{n+2g-3 \choose n}\\
& \; - (-1)^{c-4}\sum_{n=0}^{c-2g-4}(-1)^n{n+2g-1 \choose n}\\
 = & \; (-1)^{c}\sum_{n=0}^{c-2g-4}(-1)^n{n+2g-1 \choose n} - {c-4 \choose c-2g-3} + {c-3 \choose c-2g-2} + (-1)^{c}\sum_{n=0}^{c-2g-2}(-1)^n{n+2g-3 \choose n}\\
& \; - (-1)^{c}\sum_{n=0}^{c-2g-4}(-1)^n{n+2g-1 \choose n} \\
= & \; - {c-4 \choose c-2g-3} + {c-3 \choose c-2g-2} + (-1)^{c}\sum_{n=0}^{c-2g-2}(-1)^n{n+2g-3 \choose n}\\
= & \; (-1)^{c}\sum_{n=0}^{c-2g-2}(-1)^n{n+2g-3 \choose n} + {c-4 \choose c-2g-2}.
\end{align*}

{\adam This is HOMEWORK for you.  Add justifications at the end like in the proofs above.}

{\blue Here is a direct proof.  On the one hand, it doesn't switch proof techniques half-way through.  On the other hand, it is more complicated binomial coefficient and sum work.  You decide which you like best.

Omitted in your work above was the justification at each step.  Would you prefer to include it, along with justification of these steps?}
\end{comment}
\begin{align*}
&t(c-1,g)+t(c-3,g-1)-t(c-3,g)\\
= & \; (-1)^{c}\sum_{n=0}^{c-2g-2}(-1)^n{n+2g-3 \choose n} 
+ (-1)^{c}\sum_{n=0}^{c-2g-2}(-1)^n{n+2g-2 \choose n}
- (-1)^{c}\sum_{n=0}^{c-2g-3}(-1)^n{n+2g-2 \choose n} \\
= & \; (-1)^{c}\sum_{n=0}^{c-2g-2}(-1)^n\left[{n+2g-3 \choose n} + {n+2g-2 \choose n}\right]
+ (-1)^{c}\sum_{n=0}^{c-2g-3}(-1)^{n+1}{(n+1)+2g-3 \choose n} \\
= & \; (-1)^{c}\sum_{n=0}^{c-2g-2}(-1)^n\left[{n+2g-2 \choose n} + {n+2g-3 \choose n}\right]
+ (-1)^{c}\sum_{n=1}^{c-2g-2}(-1)^{n}{n+2g-3 \choose n-1} \\
%= & \; (-1)^{c}\sum_{n=0}^{c-2g-2}(-1)^n\left[{n+2g-2 \choose n} + {n+2g-3 \choose n}\right]
%+ (-1)^{c}\sum_{n=0}^{c-2g-2}(-1)^{n}{n+2g-3 \choose n-1} \\
= & \; (-1)^{c}\sum_{n=0}^{c-2g-2}(-1)^n\left[{n+2g-2 \choose n} + {n+2g-3 \choose n} + {n+2g-3 \choose n-1}\right] \\
= & \; 2 (-1)^{c}\sum_{n=0}^{c-2g-2}(-1)^n{n+2g-2 \choose n} \\
= & \; 2 t_p(2c-4,2g-1),
\end{align*}
where the second inequality follows from combining the first two sums, the third equality follows from changing the index of the last sum, the fourth equality follows from combining, and the fifth equality follows from the relation $\binom{n}{k}=\binom{n-1}{k}+\binom{n-1}{k-1}$.
\begin{comment}
We now proceed by induction to show this equals $2t_p(2c-4,2g-1)$. The base case of $c=5$ can be verified by computation. For our inductive hypothesis, we assume $t(c-3,g-1) + {c-4 \choose c-2g-2} = 2t_p(2c-4,2g-1)$ for some $c\geq5$, which is to say

\[(-1)^{c}\sum_{n=0}^{c-2g-2}(-1)^n{n+2g-3 \choose n} + {c-4 \choose c-2g-2} = 2 (-1)^{c}\sum_{n=0}^{c-2g-2}(-1)^n{n+2g-2 \choose n}.\] 

We now consider \[t(c-2,g-1) + {c-3 \choose c-2g-1},\] which we write as

\begin{align*}
& (-1)^{c+1}\sum_{n=0}^{c-2g-1}(-1)^n{n+2g-3 \choose n} + {c-3 \choose c-2g-1} \\
= &  \; (-1)^{c+1}\sum_{n=0}^{c-2g-2}(-1)^n{n+2g-3 \choose n} + {c-4 \choose c-2g-1} + {c-3 \choose c-2g-1}\\
= &  \; (-1)^{c+1}\sum_{n=0}^{c-2g-2}(-1)^n{n+2g-3 \choose n} + {c-4 \choose c-2g-2} + 2{c-4 \choose c-2g-1} \\
 = & \; 2 (-1)^{c+1}\sum_{n=0}^{c-2g-2}(-1)^n{n+2g-2 \choose n} + 2{c-4 \choose c-2g-1}\\
  = & \; 2 (-1)^{c+1}\sum_{n=0}^{c-2g-1}(-1)^n{n+2g-2 \choose n}\\
  = & \; 2 t_p(2c-2,2g-1).
\end{align*}
\end{comment}
\end{proof}

It will be useful in our proofs to have formulas for $t_p(c,g)$, $t(c,g)$, and $\bar{t}(c,g)$ when $g$ is small. Equations \eqref{eq:tp(c,1)}, \eqref{eq:tp(c,2)}, and \eqref{eq:tp(c,3)} below can be proved via induction using Lemma \ref{lemma:tprecur}. Similarly, one can prove Equation \eqref{eq:t(c,1)} using Lemma \ref{lemma:trecur}. Equation \eqref{eq:tbar(c,1)} follows from Equations \eqref{eq:tp(c,1)} and \eqref{eq:t(c,1)}. For all $c\geq 3$, we have
\begin{subequations}
\label{eq:optim}
\begin{align}
    t_p(c,1) = &\; \begin{cases} 0&\text{if $c\equiv 1$ or $2$ mod $4$},\\
    1 & \text{if $c\equiv 0$ or $3$ mod $4$,}\\
    \end{cases}  
    \label{eq:tp(c,1)}\\
    t_p(c,2) = & \; \left\lfloor\frac{c-1}{4}\right\rfloor,
    \label{eq:tp(c,2)}\\
    t_p(c,3) = & \; \frac{1}{8}\left(2\left\lfloor\frac{c-5}{2}\right\rfloor \left\lfloor\frac{c-1}{2}\right\rfloor - (-1)^{\left\lfloor\frac{c-1}{2}\right\rfloor}+1\right),
    \label{eq:tp(c,3)}\\
    t(c,1) = & \; \left\lfloor\frac{c-1}{2}\right\rfloor,
    \label{eq:t(c,1)}\\
    \bar{t}(c,1) = & \; \left\lfloor\frac{c+1}{4}\right\rfloor.
    \label{eq:tbar(c,1)}
\end{align}
\end{subequations}

\section{Median and Mode}
\label{sec:Mode}

In Section \ref{sec:recursions}, we found recursive and explicit formulas for $t(c,g)$ and $t_p(c,g)$. In this section, we use these to prove that the median and mode of the distribution of genera of 2-bridge knots with crossing number $c$ are both $\left\lfloor\frac{c+2}{4}\right\rfloor$. To do this, we first will prove analogous results for $T(c)$ and $T_p(c)$.

For a word $\omega$ in $T(c)$ or $T_p(c)$, let $K_{\omega}$ be the $2$-bridge knot associated with $\omega$. Define two random variables $G_{T(c)}:T(c)\to\R$ and $G_{T_p(c)}:T_p(c)\to\R$ by $G_{T(c)}(\omega)=g(K_{\omega})$ and $G_{T_p(c)}(\omega) = g(K_{\omega})$, respectively. In other words, $G_{T(c)}$ and $G_{T_p(c)}$ are the random variables that input a word from $T(c)$ and $T_p(c)$, respectively, and output the genus of the knot associated with the word. Recall that $\mathcal{K}_c$ is the set of 2-bridge knots of crossing number $c$ and $G_c:\mathcal{K}_c\to\R$ is defined by $G_c(K)=g(K)$. For each $c$, the random variables $G_c$, $G_{T(c)}$ and $G_{T_p(c)}$ all have a special property that we name quasi-symmetric and define here.

\newpage

\begin{definition}
The sequence $(a_1,a_2,\dots,a_n)$ of integers is \textit{left-dominated quasi-symmetric} if
\[a_{n-j+1}\leq a_{j} \leq a_{n-j}\]
for $1\leq j \leq \left\lfloor \frac{n}{2}\right\rfloor$ and is \textit{right-dominated quasi-symmetric} if
\[a_j\leq a_{n-j+1}\leq a_{j+1}\]
for $1 \leq j \leq \left\lfloor \frac{n}{2}\right\rfloor$. If $(a_1,\dots,a_n)$ is either left- or right-dominated quasi-symmetric, we say the sequence is \textit{quasi-symmetric}. 

Let $X:\Omega\to\R$ be a random variable, where $\Omega$ is a finite set and the range of $X$ is a subset of $\{1,\dots,n\}$. For each $j=1,\dots,n$, define $a_j=|\{\omega\in\Omega~:~X(\omega)=j\}|$. We say that the random variable $X$ is \textit{left-dominated quasi-symmetric}, \textit{right-dominated quasi-symmetric}, or \textit{quasi-symmetric} if the sequence $(a_1,\dots,a_n)$ is left-dominated quasi-symmetric, right-dominated quasi-symmetric, or quasi-symmetric, respectively.
    \end{definition}
As a slight abuse of notation, we write $X:\Omega\to\{1,\dots,n\}$ for a real-valued random variable whose range is a subset of $\{1,\dots,n\}$. A quasi-symmetric sequence is totally ordered in the following sense. If $(a_1,\dots,a_{2k})$ is left-dominated quasi-symmetric, then 
\[a_{2k}\leq a_1\leq a_{2k-1} \leq a_2 \leq \cdots \leq a_{k+2} \leq a_{k-1}\leq a_{k+1}\leq a_{k},\] and if $(a_1,\dots,a_{2k})$ is right-dominated quasi-symmetric, then \[a_1\leq a_{2k}\leq a_2 \leq a_{2k-1}\leq \cdots\leq a_{k-1} \leq a_{k+2}\leq a_{k}\leq a_{k+1}.\] Similarly, if $(a_1,\dots,a_{2k-1})$  is left-dominated quasi-symmetric, then \[a_{2k-1}\leq a_1\leq a_{2k-2} \leq a_2 \leq \cdots \leq a_{k-2} \leq a_{k+1} \leq a_{k-1}\leq a_k,\] and if $(a_1,\dots,a_{2k-1})$  is right-dominated quasi-symmetric, then \[a_1 \leq a_{2k-1}\leq a_2\leq a_{2k-2}\leq \cdots a_{k-2} \leq a_{k-1} \leq a_{k+1}\leq a_k.\]

Table \ref{tab:tbar} indicates that if $3\leq c \leq 20$, then $G_c$ is left-dominated quasi-symmetric when $c$ is odd and right-dominated quasi-symmetric when $c$ is even. The proof below of part (1) of Theorem \ref{thm:main} shows that this fact is true in general.  It will use the following observation.

\begin{remark}
\label{rem:mode}
If $n$ is odd, then the mode of a quasi-symmetric random variable $X:\Omega\to\{1,\dots,n\}$ is $\left\lceil\frac{n}{2}\right\rceil = \frac{n+1}{2}$. If $n$ is even, the mode of a left-dominated quasi-symmetric random variable $X:\Omega\to\{1,\dots,n\}$ is $\frac{n}{2}$, and the mode of a right-dominated quasi-symmetric random variable $X:\Omega\to\{1,\dots,n\}$ is $\frac{n}{2}+1$.
\end{remark}

\begin{lemma}
\label{lemma:m=M}
Let $X:\Omega\to\{1,\dots,n\}$ be a quasi-symmetric random variable. Then the median and mode of $X$ are the same. 
\begin{proof}
For $1\leq j \leq n$, define $a_j=|\{\omega\in\Omega~:~X(\omega) = j\}|$.  Suppose that $n=2k-1$ is odd; the case where $n$ is even is similar. Since $n$ is odd, by Remark \ref{rem:mode} the mode is $k$. The median of $X$ is $k$ if both $P(X\leq k)\geq 1/2$ and $P(X \geq k)\geq 1/2$. Since 
\[P(X\leq k) = \frac{\sum_{j=1}^k a_j}{\sum_{j=1}^n a_j}~\text{and}~P(X\geq k) = \frac{\sum_{j=k+1}^n a_j}{\sum_{j=1}^n a_j},\]
to show that $k$ is also the median, it suffices to show that
\begin{equation}
    \label{ineq:qs}
a_{k} \geq \left| \sum_{j=1}^{k-1} a_j - \sum_{j=k+1}^{n} a_j\right|.
\end{equation}

Suppose that $X$ is left-dominated quasi-symmetric. Thus 
\[a_{2k-1}\leq a_1 \leq a_{2k-2} \leq a_2 \leq \cdots \leq a_{k+1}\leq a_{k-1}\leq a_k.\] 
Therefore by grouping around the first, third, and, more generally, odd-numbered inequalities,
\begin{align*}
    \left| \sum_{j=1}^{k-1} a_j - \sum_{j=k+1}^{n} a_j\right| = & \; |(a_1-a_{2k-1}) + (a_2-a_{2k-2}) + \cdots + (a_{k-1}-a_{k+1})|\\
    = & \; (a_1-a_{2k-1}) + (a_2-a_{2k-2}) + \cdots + (a_{k-1}-a_{k+1})\\
    = & \;  \sum_{j=1}^{k-1} a_j - \sum_{j=k+1}^{n} a_j,
\end{align*}
and by grouping around the second, fourth, and, more generally, even-numbered inequalities,
%\[(a_k-a_{k-1}) + (a_{k+1} - a_{k-2}) +\cdots + (a_{2k-2}-a_1) + a_{2k-1}\geq 0,\]
\[a_{2k-1} + (a_{2k-2}-a_1) + \cdots + (a_{k+1} - a_{k-2}) + (a_k-a_{k-1}) \geq 0,\]
%{\adam Reverse the order.}
as it is the sum of non-negative terms. Hence Inequality \eqref{ineq:qs} holds, and $k$ is the median of $X$.

Now suppose that $X$ is right-dominated quasi-symmetric. Thus 
\[a_1\leq a_{2k-1}\leq a_2 \leq a_{2k-2}\leq \cdots \leq a_{k-1}\leq a_{k+1}\leq a_k.\]
Therefore by grouping around the first, third, and, more generally, odd-numbered inequalities,
\begin{align*}
    \left| \sum_{j=1}^{k-1} a_j - \sum_{j=k+1}^{n} a_j\right| = & \; |(a_{2k-1}-a_1) + (a_{2k-2}-a_2) + \cdots + (a_{k+1}-a_{k-1})| \\
 = & \;  (a_{2k-1}-a_1) + (a_{2k-2}-a_2) + \cdots + (a_{k+1}-a_{k-1})\\
    = & \; \sum_{j=k+1}^{n} a_j - \sum_{j=1}^{k-1} a_j,
\end{align*}
and by grouping around the second, fourth, and, more generally, even-numbered inequalities,
\[a_1 + (a_2 - a_{2k-1}) + \cdots + (a_{k-1}-a_{k-2}) + (a_k - a_{k+1}) \geq 0,\]
%\[(a_k - a_{k+1}) + (a_{k-1}-a_{k+2}) + \cdots + (a_2 - a_n) + a_1\geq 0,\]
as it is the sum of non-negative terms. Hence Inequality \eqref{ineq:qs} holds, and $k$ is the median of $X$.
\end{proof}
\end{lemma}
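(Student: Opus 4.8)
The plan is to prove that for a quasi-symmetric random variable $X:\Omega\to\{1,\dots,n\}$, the median and mode coincide, by splitting into the parity of $n$ and then into the left- versus right-dominated subcases. The mode is already pinned down by Remark \ref{rem:mode}, so the real content is verifying that the same value is a median. Recall that an integer $m$ is a median of $X$ precisely when $P(X\leq m)\geq 1/2$ and $P(X\geq m)\geq 1/2$; equivalently, writing $a_j=|\{\omega:X(\omega)=j\}|$, that $\sum_{j\leq m}a_j \geq \sum_{j>m}a_j$ and $\sum_{j\geq m}a_j\geq \sum_{j<m}a_j$. Both of these follow at once if one shows that the ``middle block'' of $a_j$'s dominates the absolute difference of the two tail sums, which is Inequality \eqref{ineq:qs} in the odd case.

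First I would handle $n=2k-1$ odd, which the excerpt carries out: the mode is $k$, and it suffices to show $a_k\geq |\sum_{j=1}^{k-1}a_j - \sum_{j=k+1}^{n}a_j|$. The key trick is to use the total order on the $a_j$'s guaranteed by quasi-symmetry. In the left-dominated case the chain is $a_{2k-1}\leq a_1\leq a_{2k-2}\leq a_2\leq\cdots\leq a_{k+1}\leq a_{k-1}\leq a_k$; pairing up terms ``every other inequality'' one way shows the signed tail difference $\sum_{j=1}^{k-1}a_j-\sum_{j=k+1}^{n}a_j = (a_1-a_{2k-1})+(a_2-a_{2k-2})+\cdots+(a_{k-1}-a_{k+1})$ is a sum of nonnegative terms (so the absolute value can be dropped), and pairing ``the other way'' rewrites $a_k - (\text{that difference})$ as $a_{2k-1}+(a_{2k-2}-a_1)+\cdots+(a_{k+1}-a_{k-2})+(a_k-a_{k-1})$, again a sum of nonnegative terms, giving \eqref{ineq:qs}. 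The right-dominated case is the mirror image: the chain is $a_1\leq a_{2k-1}\leq a_2\leq\cdots\leq a_{k+1}\leq a_k$, the tail difference comes out $\leq 0$ instead, and the same regrouping finishes it.

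Then I would do $n=2k$ even, which the excerpt leaves to the reader with ``the case where $n$ is even is similar.'' Here the mode depends on the subcase: by Remark \ref{rem:mode} it is $k$ for left-dominated and $k+1$ for right-dominated. For left-dominated $n=2k$ the order is $a_{2k}\leq a_1\leq a_{2k-1}\leq a_2\leq\cdots\leq a_{k+1}\leq a_k$, and I want $k$ to be a median, i.e.\ $a_k \geq |\sum_{j=1}^{k-1}a_j - \sum_{j=k+1}^{2k}a_j|$. Pairing $a_j$ with $a_{2k+1-j}$ for $j=1,\dots,k-1$ and isolating the leftover $a_{2k}$: the tail difference is $(a_1-a_{2k-1})+(a_2-a_{2k-2})+\cdots+(a_{k-1}-a_{k+1}) - a_{2k}$, and the grouping $a_k$ against this rearranges into a sum of nonnegative consecutive differences plus $a_{2k}$. (One has to be slightly careful which way the order chain makes each pair nonnegative, but it is the same two-fold regrouping as in the odd case, now with an extra unpaired smallest term $a_{2k}$ to place.) For right-dominated $n=2k$ with mode $k+1$, the symmetric computation works: the order is $a_1\leq a_{2k}\leq a_2\leq\cdots\leq a_{k+1}\leq a_k$, pair $a_j$ with $a_{2k+1-j}$ for $j=2,\dots,k$ leaving $a_1$ unpaired, and show $a_{k+1}\geq|\sum_{j=1}^{k}a_j - \sum_{j=k+2}^{2k}a_j|$ the same way.

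The main obstacle is purely bookkeeping: in the even cases there is an unpaired extremal term ($a_{2k}$ for left-dominated, $a_1$ for right-dominated), and one must track carefully (a) that the mode shifts between the two subcases and (b) which grouping of the quasi-symmetry chain makes each pairwise difference nonnegative, since getting the direction wrong would invert a crucial inequality. No new ideas beyond the odd case are needed — it is just a matter of writing the two even subcases with the extra term in the right place and invoking Remark \ref{rem:mode} for the correct mode value in each. I would present the even case in full detail rather than deferring it, mirroring the two displayed regroupings from the odd-$n$ argument.
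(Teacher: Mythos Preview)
Your proposal is correct and follows essentially the same approach as the paper's proof: identify the mode via Remark \ref{rem:mode}, reduce the median claim to the inequality $a_{\text{mode}}\geq|\text{left tail}-\text{right tail}|$, and verify this by two regroupings of the quasi-symmetry chain. The paper treats only the odd case in detail and dismisses the even case as similar, whereas you sketch both; one small slip: in the right-dominated $n=2k$ case your displayed chain should end $\cdots\leq a_k\leq a_{k+1}$ (matching the mode $k+1$ you correctly use), not $a_{k+1}\leq a_k$.
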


Lemmas \ref{lemma:tpmode} and \ref{lemma:tmode} below show that $G_{T_p(c)}$ and $G_{T(c)}$ are quasi-symmetric. Thus Remark \ref{rem:mode} and Lemma \ref{lemma:m=M} give the median and mode of $G_{T_p(c)}$ and $G_{T(c)}$, leading to the proof of part (1) of Theorem \ref{thm:main} to conclude this section.

\begin{lemma}
\label{lemma:tpmode}
The random variable $G_{T_p(c)}:T_p(c)\to\{1,2,\dots, \left\lfloor\frac{c-1}{2}\right\rfloor\}$ is right-dominated quasi-symmetric. Therefore the median and mode of $G_{T_p(c)}$ are both $\left\lfloor \frac{c+3}{4}\right\rfloor$ when $c\equiv 1$ or $2$ mod $4$ and $\left\lfloor \frac{c+1}{4}\right\rfloor$ when $c\equiv 3$ or $0$ mod $4$.
\end{lemma}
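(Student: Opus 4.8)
The plan is to prove the following purely combinatorial statement by induction on $c$: the sequence $(t_p(c,1),t_p(c,2),\dots,t_p(c,n))$, where $n=\left\lfloor\frac{c-1}{2}\right\rfloor$, is right-dominated quasi-symmetric. Once this is established, Lemma \ref{lemma:m=M} gives that the median equals the mode, and Remark \ref{rem:mode} identifies this common value as $\frac{n+1}{2}$ when $n$ is odd and $\frac{n}{2}+1$ when $n$ is even; a short check of the four residues of $c$ modulo $4$ (which pins down the parity of $n$) then rewrites this as $\left\lfloor\frac{c+3}{4}\right\rfloor$ for $c\equiv1,2$ and $\left\lfloor\frac{c+1}{4}\right\rfloor$ for $c\equiv3,0$, as claimed. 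By Corollary \ref{t_pevenandoddsame} the sequence for an even crossing number agrees termwise with that of the preceding odd one (and the two ambient ranges coincide), so it suffices to prove the claim for odd $c$, with the induction stepping from $c-2$ to $c$.

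For the base cases I would check $c=3$ (where $n=1$ and the quasi-symmetry condition is vacuous) and $c=5$ (where Equations \eqref{eq:tp(c,1)} and \eqref{eq:tp(c,2)} give the sequence $(0,1)$) directly. For the inductive step with odd $c\ge 7$, write $B=(b_1,\dots,b_m)$ for the sequence at $c-2$, so $m=\frac{c-3}{2}$, and $A=(a_1,\dots,a_N)$ for the sequence at $c$, so $N=m+1$. I would record three ingredients: (i) by Lemma \ref{lemma:tp(c,g)} the top entry is $a_N=t_p(c,n)=1$, since the defining sum collapses to a single term; (ii) Equation \eqref{eq:tp(c,1)} gives $a_1=t_p(c,1)\in\{0,1\}$ and Equation \eqref{eq:tp(c,2)} gives $b_2=t_p(c-2,2)=\left\lfloor\frac{c-3}{4}\right\rfloor\ge1$; and (iii) Theorem \ref{thm:3formulas}(1) gives the Pascal-type recursion $a_g=b_{g-1}+b_g$ for $2\le g\le N$, with the conventions $b_0=b_N=0$. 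With these in hand, I would verify $a_j\le a_{N+1-j}\le a_{j+1}$ for $1\le j\le\left\lfloor N/2\right\rfloor$. The case $j=1$ is immediate: $a_1\le1=a_N$ because $a_1\in\{0,1\}$, and $a_N=1\le b_2\le b_1+b_2=a_2$. For $2\le j\le\left\lfloor N/2\right\rfloor$, substituting the recursion turns the two desired inequalities into $b_{j-1}+b_j\le b_{m+1-j}+b_{m+2-j}$ and $b_{m+1-j}+b_{m+2-j}\le b_j+b_{j+1}$, each of which I would deduce term by term from the inductive hypothesis that $B$ is right-dominated quasi-symmetric, matching $b_{j-1}$ with $b_{m+2-j}=b_{m+1-(j-1)}$, $b_j$ with $b_{m+1-j}$, and then $b_{m+1-j}$ with $b_{j+1}$ and $b_{m+2-j}$ with $b_j$; every such comparison is one that occurs, in the correct direction, in the total order $b_1\le b_m\le b_2\le b_{m-1}\le\cdots$ attached to a right-dominated quasi-symmetric sequence.

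The step I expect to require the most care is the bookkeeping near the middle index $j=\left\lfloor N/2\right\rfloor$, where the chain of inequalities ``folds back'': depending on the parity of $N$, the two conditions $a_j\le a_{N+1-j}$ and $a_{N+1-j}\le a_{j+1}$ may coincide or one of them may become the trivial inequality $a_{j+1}\le a_{j+1}$, some of the matched pairs $b_i,b_{i'}$ above collapse to a single term, and the index $j$ may exceed $\left\lfloor m/2\right\rfloor$ so that the relevant instance of quasi-symmetry for $B$ has to be read off as an equality rather than a strict relation. One must therefore separate out the middle index and, in the even-$N$ case, verify $a_{N/2}\le a_{N/2+1}$, which after cancelling the common term $b_{N/2}$ reduces to $b_{N/2-1}\le b_{N/2+1}$, again an inequality in the chain for $B$. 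One also needs to confirm that all indices appearing in the term-by-term comparisons lie in $\{1,\dots,m\}$, which follows from $j\le\left\lfloor N/2\right\rfloor=\left\lfloor\frac{m+1}{2}\right\rfloor$; this is routine. Apart from this middle-index analysis, the inductive step is a direct consequence of the recursion of Theorem \ref{thm:3formulas}(1) and the explicit small-genus formulas of Equations \eqref{eq:optim}.
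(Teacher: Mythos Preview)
Your proposal is correct and follows essentially the same approach as the paper: reduce to odd $c$ via Corollary~\ref{t_pevenandoddsame}, induct from $c-2$ to $c$ using the Pascal-type recursion of Theorem~\ref{thm:3formulas}(1), handle $j=1$ separately with the explicit small-genus formulas, and treat the middle index as a special case. The paper organizes the induction by splitting $c\equiv 1$ versus $c\equiv 3$ modulo $4$, whereas you keep the argument uniform and track the parity of $N$ only at the fold; your treatment of the $j=1$ upper inequality $a_N\le a_2$ (via $a_N=1\le b_2$) is slightly more explicit than the paper's, but the substance is identical.
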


\begin{proof} Corollary \ref{t_pevenandoddsame} implies that we only need to prove the statement for $c$ even or $c$ odd. We proceed by induction on  odd $c$. The two base cases are when $c=3$ and $c=5$. These hold because $t_p(3,g)=1$ when $g=1$ and $0$ when $g\neq 1$, and $t_p(5,g)=1$ when $g=2$ and $0$ when $g\neq 2$.

For the inductive step, we assume $c\equiv 1$ mod $4$, but the case where $c\equiv 3$ mod $4$ proceeds similarly. Let $c=4k+1$, and so $\left\lfloor\frac{c-1}{2}\right\rfloor=2k$. In order to show that $G_{T_p(c)}$ is right-dominated quasi-symmetric, we prove for $1\leq j \leq k$ that
\begin{equation}
t_p(c,j) \leq t_p\left(c,2k-j+1\right) \leq t_p(c,j+1).
\label{ineq:qsproof}
\end{equation}

By the inductive hypothesis, the sequence $\left(t_p(c-2,1), t_p(c-2,2),\dots, t_p\left(c-2,2k-1\right)\right)$
is right-dominated quasi-symmetric, and thus
\[t_p(c-2,j) \leq t_p\left(c-2,2k-j\right) \leq t_p(c-2,j+1)\]
for $1\leq j \leq k-1$. 
If $j=1$, then Equation \eqref{eq:tp(c,1)} implies $t_p(c,1)=0$ because $c\equiv 1$ mod $4$. Hence $t_p(c,1)\leq t_p\left(c,2k\right)$.
Suppose that $1<j\leq k-1$. Then  by part (1) of Theorem \ref{thm:3formulas} and the inductive hypothesis
\begin{align*}
    t_p(c,j) = & \; t_p(c-2,j) + t_p(c-2,j-1)\\
    \leq & \; t_p\left(c-2,2k-j\right) +t_p\left(c-2,2k-j+1\right)\\
    %= & \; t_p\left(c,\frac{c-3}{2}-j+2\right)\\
    = & \; t_p\left(c,2k-j+1\right).
\end{align*}
Similarly,
\begin{align*}
    t_p(c,j+1) = & \; t_p(c-2,j+1) + t_p(c-2,j)\\
    \geq & \; t_p\left(c-2,2k-j\right) +t_p\left(c-2,2k-j+1\right)\\
    %= & \; t_p\left(c,\frac{c-3}{2}-j+2\right)\\
    = & \; t_p\left(c,2k-j+1\right).
\end{align*}

It remains to show Inequality \eqref{ineq:qsproof} holds when $j=k$. In that case, the two inequalities become
\[t_p(c,k) \leq t_p(c,k+1)\leq t_p(c,k+1),\]
where the latter one obviously holds. The former one holds because
\[ t_p(c,k) = t_p(c-2,k-1)+t_p(c-2,k) \leq t_p(c-2,k+1)+t_p(c-2,k) = t_p(c,k+1).\]
\end{proof}

\begin{lemma}
\label{lemma:tmode}
    The random variable $G_{T(c)}:T(c) \to \left\{1,\dots,\left\lfloor \frac{c-1}{2}\right\rfloor\right\}$ is left-dominated quasi-symmetric when $c$ is odd and right-dominated quasi-symmetric when $c$ is even. Thus the median and mode of $G_{T(c)}$ are both $\left\lfloor\frac{c+2}{4}\right\rfloor$.
\end{lemma}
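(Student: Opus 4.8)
The plan is to mirror the structure of the proof of Lemma \ref{lemma:tpmode}, but now for the full word set $T(c)$, using the recursion from Lemma \ref{lemma:trecur} rather than the two-term recursion for $t_p$. First I would reduce to a statement purely about the sequence $\bigl(t(c,1),\dots,t(c,\lfloor(c-1)/2\rfloor)\bigr)$: show it is left-dominated quasi-symmetric when $c$ is odd and right-dominated quasi-symmetric when $c$ is even. Once that is established, the median and mode claim follows immediately from Remark \ref{rem:mode} (applied with $n=\lfloor(c-1)/2\rfloor$) together with Lemma \ref{lemma:m=M}, after checking that $\lceil n/2\rceil$, $n/2$, and $n/2+1$ all evaluate to $\lfloor (c+2)/4\rfloor$ in the appropriate parity cases for $c$ — a short arithmetic verification I would just record.

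For the quasi-symmetry itself I would induct on $c$, with base cases $c=3,4,5,6$ handled by the explicit values in Table \ref{tab:tbar} (or direct computation). The natural inductive tool is not Lemma \ref{lemma:trecur} directly but rather part (3) of Theorem \ref{thm:3formulas}, which gives, for $g\geq 2$,
\[
\bar t(c,g) = \bar t(c-2,g) + \bar t(c-2,g-1) + t_p(2c-4,2g-1),
\]
together with the analogous two-term-plus-correction identities for $t(c,g)$; but working directly with $t(c,g)$ is cleaner since $2\bar t = t + t_p$ and both summands are already known to be quasi-symmetric (the $t_p$ case is Lemma \ref{lemma:tpmode}). Concretely, I would first derive from Lemma \ref{lemma:trecur} a clean identity of the form $t(c,g) = t(c-2,g) + t(c-2,g-1) + (\text{correction})$, identify the correction term (it should be expressible via $t_p$ of a related crossing number, as in the proof of Theorem \ref{thm:3formulas}(3)), and then propagate the quasi-symmetry of the shifted sequences $\bigl(t(c-2,j)\bigr)_j$ and the correction sequence through the sum, exactly as in Inequality \eqref{ineq:qsproof}: verify $t(c,j)\le t(c, n-j+1)\le t(c,j+1)$ (with the inequality direction dictated by the parity of $c$) for $1\le j\le \lfloor n/2\rfloor$, checking the $j=1$ boundary case and the middle $j=\lfloor n/2 \rfloor$ case separately.

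The main obstacle I anticipate is the parity bookkeeping. The set $T(c)$ has maximal genus $\lfloor(c-1)/2\rfloor$, and this quantity jumps by one when passing from $c-2$ to $c$ only for one parity class, so the alignment of the shifted sequences inside the quasi-symmetry inequalities is delicate; moreover the recursion in Lemma \ref{lemma:trecur} reaches back to $c-1$, $c-2$, and $c-3$, mixing both parities, so I would want to first combine it into a pure same-parity recursion (using the known relations between $t$, $t_p$, and the $c$ vs.\ $c+1$ coincidences) before inducting. A secondary point requiring care: when $c$ is odd the sequence is \emph{left}-dominated but when $c$ is even it is \emph{right}-dominated, so the inductive step must cross parity, meaning I cannot simply induct within a fixed parity class the way Lemma \ref{lemma:tpmode} does via Corollary \ref{t_pevenandoddsame}; instead I would induct on $c$ in steps of $1$, carrying the correct (parity-dependent) form of the hypothesis, and verify that the recursion converts a left-dominated sequence at $c-1$ (say $c-1$ odd) into a right-dominated sequence at $c$. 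Showing that the "correction term" has the right sign and magnitude to preserve (rather than destroy) the totally-ordered chain of the quasi-symmetric sequence is where the real work lies.
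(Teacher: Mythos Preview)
Your inductive plan is viable in principle --- the same-parity identity you are looking for does exist, namely $t(c,g)=t(c-2,g)+t(c-2,g-1)+2\,t_p(2c-4,2g-1)$ (it is essentially what is computed inside the proof of Theorem~\ref{thm:3formulas}(3)), and with enough care one could push the quasi-symmetry through it --- but you are missing a much shorter route that the paper actually takes.

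The key is Theorem~\ref{thm:3formulas}(2): $t_p(2c,2g)=t(c,g)$. This says that the sequence $\bigl(t(c,1),\dots,t(c,\lfloor(c-1)/2\rfloor)\bigr)$ is literally the subsequence of even-indexed terms of $\bigl(t_p(2c,1),\dots,t_p(2c,c-1)\bigr)$. Lemma~\ref{lemma:tpmode} already proves the latter is right-dominated quasi-symmetric, hence totally ordered in the explicit interleaving pattern displayed after the definition. Restricting that total order to the even indices immediately gives a total order on the $t(c,g)$, and a one-line parity check shows it is the left-dominated pattern when $c$ is odd and the right-dominated pattern when $c$ is even. No new induction is required; Remark~\ref{rem:mode} and Lemma~\ref{lemma:m=M} then give the median and mode.

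What this buys: every obstacle you anticipate --- the mixed-parity five-term recursion from Lemma~\ref{lemma:trecur}, the cross-parity inductive step, the sign and magnitude control on the correction term --- simply disappears. Your approach is not wrong, but the ``real work'' you flag would amount to reproving the structural content of Lemma~\ref{lemma:tpmode} a second time at doubled crossing number, which the identity $t(c,g)=t_p(2c,2g)$ renders unnecessary.
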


\begin{proof}
 Lemma \ref{lemma:tpmode} implies that $G_{T_p(2c)}$ is right-dominated quasi-symmetric, and thus there is a total ordering on the quantities $t_p(2c,g)$ for $1\leq g \leq \left\lfloor \frac{2c-1}{2}\right\rfloor$. Since Theorem \ref{thm:3formulas} states that $t_p(2c,2g) = t(c,g)$, the total ordering of $t_p(2c,g)$ induces a total ordering of $t(c,g)$ for $1\leq g \leq \left\lfloor\frac{c-1}{2}\right\rfloor$. The total ordering on the quantities $t(c,g)$ implies that $T(c)$ is left-dominated quasi-symmetric when $c$ is odd and right-dominated quasi-symmetric when $c$ is even. Remark \ref{rem:mode} and Lemma \ref{lemma:m=M} imply that the median and mode of $G_{T(c)}$ are both $\left\lfloor\frac{c+2}{4}\right\rfloor$.
\end{proof}

Since $\bar t(c,g) = \frac{1}{2}\left(t(c,g) + t_p(c,g)\right)$, we use the the facts that $G_{T(c)}$ and $G_{T_p(c)}$ are quasi-symmetric to prove that $G_c$ is quasi-symmetric, yielding formulas for the median and mode of $G_c$. Our proof also uses part (3) of Theorem \ref{thm:3formulas}. 

\begin{proof}[Proof of Theorem \ref{thm:main} part (1)]
For $c\equiv 0, 2, $ and $3$ mod $4$, the mode and median of $G_{T(c)}$ and $G_{T_p(c)}$ both occur at $g= \left\lfloor\frac{c+2}{4}\right\rfloor$. It follows that the mode and median of $G_c$ are also at $g= \left\lfloor\frac{c+2}{4}\right\rfloor$ as $\bar t(c,g) = \frac{1}{2}\left(t(c,g) + t_p(c,g)\right)$. For $c\equiv 1$ mod $4$, the mode and median occur at $g=\frac{c+3}{4}$ for $G_{T_p(c)}$ and at $g = \frac{c-1}{4}=\left\lfloor\frac{c+2}{4}\right\rfloor$ for $G_{T(c)}$. Since the set $T(c)$ is exponentially larger than the set $T_p(c)$, one expects that the median of $G_c$ and $G_{T(c)}$ agree. To prove the median and mode of $G_c$ when $c\equiv 1$ mod $4$ occurs at $g = \frac{c-1}{4}$, we show $G_c$ is left-dominated quasi-symmetric when $c \equiv 1,3$ mod $4$ by induction then apply Lemma \ref{lemma:m=M}.  

The two base cases of $c = 3$ and $c = 5$ can be verified by direct computation. For our inductive step, we assume $c \equiv 1$ mod $4$, but the case where $c \equiv 3$ mod $4$ proceeds similarly.

Let $c = 4k + 1$, and so $\left\lfloor\frac{c-1}{2}\right\rfloor=2k$.  In order to prove $G_c$ is left-dominated quasi-symmetric we prove for $1\leq j \leq k$ that
\begin{equation}
\bar t(c,2k-j+1) \leq \bar t\left(c,j\right) \leq \bar t(c,2k-j).
\label{ineq:tbar}
\end{equation}

By the inductive hypothesis, the sequence $\left(\bar t(c-2,1), \bar t(c-2,2),\dots, \bar t\left(c-2,2k-1\right)\right)$ is left-dominated quasi-symmetric, and thus
\[ \bar t(c-2,2k-j) \leq \bar t\left(c-2,j\right) \leq \bar t(c-2,2k-j-1) \]
for $1\leq j \leq k-1$. 

For for $g=1$, we check $\bar t(c,2k) \leq \bar t(c,1) \leq \bar t(c,2k-1)$. Since $\bar t(c,2k) = \bar t(c-2,2k) + \bar t(c-2,2k-1) + t_p(2c-4,4k-1)$ and $t_p(2c-4,4k-1) = \bar t(c-2,2k) = 0$, a simple inductive argument shows $\bar t(c,2k) = 1$ for all $c$. Equation \eqref{eq:tbar(c,1)} states $\bar t(c,1) =\left \lfloor \frac{c+1}{4} \right \rfloor$, so $1=\bar t(c,2k) \leq \bar t(c,1) = \left \lfloor \frac{c+1}{4}\right \rfloor$. For the second inequality,

\begin{align*}
    \bar t(c,2k-1) = & \; \bar t(c-2,2k-1) + \bar t(c-2,2k-2) + t_p(2c-4, 4k-3)\\
    \geq & \; \bar t(c-2,2k-1) + \bar t(c-2, 1) \\
= & \; 1 + \left\lfloor \frac{c-1}{4} \right\rfloor \geq  \left\lfloor \frac{c+1}{4} \right\rfloor =  \bar t(c,1).
\end{align*}

Now suppose that $1<j\leq k-1$. Then by part (3) of Theorem \ref{thm:3formulas} and the inductive hypothesis,
\begin{align*}
    \bar t(c,2k-j+1) = & \; \bar t(c-2,2k-j+1) + \bar t(c-2,2k-j) + t_p(2c-4, 4k-2j + 1)\\
    \leq & \; \bar t(c-2, j-1) + \bar t(c-2,j) + t_p(2c-4, 2j-1)\\
    = & \; \bar t(c,j),
\end{align*}
where $t_p(2c-4, 2j-1) \geq t_p(2c-4, 4k-2j + 1)$ follows from $G_{T_p}$ being right-dominated quasi-symmetric. Similarly,
\begin{align*}
   \bar t(c,2k-j) = & \; \bar t(c-2,2k-j) + \bar t(c,2k-j-1) + t_p(2c-4,4k-2j-1)\\
    \geq & \; \bar t(c-2,j-1) + \bar t(c-2,j) + t_p(2c-4,2j-1)\\
    = & \; \bar t(c,j).
\end{align*}

It remains to show Inequality \eqref{ineq:tbar} holds when $j=k$. In that case, the two inequalities become
\[\bar t(c,j+1) \leq \bar t(c,j) \leq \bar t(c,j), \]
where the latter one obviously holds. The former one holds because
\begin{align*}\bar t(c,j+1) = &\;  \bar t(c-2,j+1) + \bar t(c-2,j) + t_p(2c,-4,2j+1) \\
\leq & \; \bar t(c-2,j-1) + \bar t(c-2,j) + t_p(2c,-4,2j-1)\\ 
= & \; \bar t(c,j).
\end{align*}

The case where $c\equiv3$ mod $4$ is similar.
\end{proof}

\section{Variance}
\label{sec:variance}

This section focuses on finding the variance $\Var(G_c)$ of the genus of $2$-bridge knots. Theorem \ref{thm:variance} gives an exact but unwieldy formula for $\Var(G_c)$, and part (2) of Theorem \ref{thm:main} follows. Our variance computation requires computing the sum of the genera and the sum of the squares of the genera of knots coming from $T(c)$ and $T_p(c)$.

The \textit{total genus} %{\blue summed over all knots of crossing number $c$ in a given set?} {\adam It's the total genus of $T(c)$ which I think is descriptive enough. I added ``of $T(c)$" and ``of $T_p(c)$" to the statement of Lemma 5.1 below.}
$g(c)$ of $T(c)$ is defined as $g(c)=\sum_{w\in T(c)} g(K_w)$ where $K_w$ is the $2$-bridge knot associated with the word $w\in T(c)$. The \textit{total palindromic genus} $g_p(c)$ of $T_p(c)$ is defined as $g_p(c)=\sum_{w\in T_p(c)} g(K_w)$. Both $g(c)$ and $g_p(c)$ have recurrence relations that are useful in finding the explicit formula for variance.
\begin{lemma}
\label{lemma:gc} Let $c\geq 5$.
\begin{enumerate}
    \item The total genus $g(c)$ of $T(c)$ satisfies
\[g(c) = g(c-1) + 2g(c-2) + t(c-2) + t(c-3).\]
\item The total palindromic genus $g_p(c)$ of $T_p(c)$ satisfies
\[g_p(c) = 2g_p(c-2)+t_p(c-2)+(-1)^{\left\lfloor\frac{c+1}{2}\right\rfloor}.\]
\end{enumerate}
\end{lemma}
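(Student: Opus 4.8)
The plan is to derive both recurrences by taking the genus-weighted sum of the refined recurrences for $t(c,g)$ and $t_p(c,g)$ established earlier, so that no new Seifert-circle case analysis is needed. Write $g(c)=\sum_{g\geq 1}g\,t(c,g)$ and $g_p(c)=\sum_{g\geq 1}g\,t_p(c,g)$, obtained by grouping the words of $T(c)$ and $T_p(c)$ by genus. The one elementary fact used repeatedly is the reindexing identity: substituting $h=g-1$ (and using $t(c,0)=0$), $\sum_{g}g\,t(c,g-1)=\sum_{h}(h+1)\,t(c,h)=g(c)+t(c)$, and likewise $\sum_{g}g\,t_p(c,g-1)=g_p(c)+t_p(c)$. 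I use the conventions $t(c)=g(c)=0$ and $t_p(c)=g_p(c)=0$ for $c\leq 2$, which is exactly why $c\geq 5$ makes every shifted argument below legitimate.

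For part (1), I would multiply the recurrence of Lemma \ref{lemma:trecur}, namely $t(c,g)=t(c-1,g)+t(c-2,g-1)+t(c-2,g)+t(c-3,g-1)-t(c-3,g)$, by $g$ and sum over $g$. The three unshifted sums give $g(c-1)$, $g(c-2)$, and $-g(c-3)$; the two shifted sums $\sum_g g\,t(c-2,g-1)$ and $\sum_g g\,t(c-3,g-1)$ give $g(c-2)+t(c-2)$ and $g(c-3)+t(c-3)$ by the identity above. The $g(c-3)$ terms cancel, leaving exactly $g(c)=g(c-1)+2g(c-2)+t(c-2)+t(c-3)$; this part is pure bookkeeping.

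For part (2) I would use part (1) of Theorem \ref{thm:3formulas}, $t_p(c,g)=t_p(c-2,g)+t_p(c-2,g-1)$ (valid for $c\geq 5$, $g\geq 2$), rather than Lemma \ref{lemma:tprecur}, so that the index shifts by $2$ and the stated shape appears directly. Since that identity fails at $g=1$, I split off the genus-one term: $g_p(c)=t_p(c,1)+\sum_{g\geq 2}g\,t_p(c,g)$. Substituting and reindexing gives $\sum_{g\geq 2}g\,t_p(c-2,g)=g_p(c-2)-t_p(c-2,1)$ and $\sum_{g\geq 2}g\,t_p(c-2,g-1)=g_p(c-2)+t_p(c-2)$, hence $g_p(c)=2g_p(c-2)+t_p(c-2)+(t_p(c,1)-t_p(c-2,1))$. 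It then remains only to check $t_p(c,1)-t_p(c-2,1)=(-1)^{\lfloor (c+1)/2\rfloor}$, which follows from Equation \eqref{eq:tp(c,1)} by examining the four residues of $c$ modulo $4$: for $c\equiv 0,3\pmod 4$ the difference is $1-0=1$ and for $c\equiv 1,2\pmod 4$ it is $0-1=-1$, matching $(-1)^{\lfloor(c+1)/2\rfloor}$ in each case.

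The one non-mechanical point, and the step I would watch, is this genus-one correction in part (2): Theorem \ref{thm:3formulas}(1) is unavailable at $g=1$, so the entire discrepancy between the naive recurrence $g_p(c)=2g_p(c-2)+t_p(c-2)$ and the true one collapses into $t_p(c,1)-t_p(c-2,1)$, which happily equals the oscillating term $(-1)^{\lfloor(c+1)/2\rfloor}$. If one preferred not to isolate $g=1$, an alternative is to weight the three-term recurrence of Lemma \ref{lemma:tprecur} instead, obtaining the differently-shaped relation $g_p(c)=g_p(c-2)+2g_p(c-4)+t_p(c-2)+t_p(c-4)$, and then convert it to the stated form by induction on $c$ (inductive hypothesis at $c-2$, using $\lfloor(c+1)/2\rfloor=\lfloor(c-1)/2\rfloor+1$ to flip the sign of the correction term, with base cases $c=5,6$ verified by hand). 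I would present the first route, since it requires neither induction nor base cases.
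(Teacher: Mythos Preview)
Your proof is correct and follows essentially the same approach as the paper: for part (1) both you and the paper multiply the recurrence of Lemma~\ref{lemma:trecur} by $g$ and sum, and for part (2) both split off the $g=1$ term, apply Theorem~\ref{thm:3formulas}(1) for $g\geq 2$, and use Equation~\eqref{eq:tp(c,1)} to identify the correction $t_p(c,1)-t_p(c-2,1)=(-1)^{\lfloor (c+1)/2\rfloor}$. Your explicit mod-$4$ verification of that last identity and your remark about the alternative route via Lemma~\ref{lemma:tprecur} are small additions, but the core argument is the same.
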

\begin{proof}
The definitions of $g(c)$ and $t(c,g)$ and the recursive relation for $t(c,g)$ in Lemma \ref{lemma:trecur} imply
\begin{align*}
 g(c) = &\;  \sum_{i=1}^{\left\lfloor \frac{c-1}{2} \right\rfloor}i t(c,i)\\
 = & \; \sum_{i = 1}^{\left\lfloor \frac{c-1}{2} \right\rfloor} i(t(c-1,i) + t(c-2,i) + t(c-2,i-1) + t(c-3,i-1) - t(c-3,i))\\
 = & \; \sum_{i = 1}^{\left\lfloor \frac{c-1}{2} \right\rfloor} it(c-1,i) + \sum_{i = 1}^{\left\lfloor \frac{c-1}{2} \right\rfloor}it(c-2,i) + \sum_{i = 1}^{\left\lfloor \frac{c-1}{2} \right\rfloor}(i-1)t(c-2,i-1)+ \sum_{i = 1}^{\left\lfloor \frac{c-1}{2} \right\rfloor} t(c-2,i-1)\\
 & \;  + \sum_{i = 1}^{\left\lfloor \frac{c-1}{2} \right\rfloor}t(c-3,i-1) + \sum_{i = 1}^{\left\lfloor \frac{c-1}{2} \right\rfloor}(i-1)t(c-3,i-1)- \sum_{i = 1}^{\left\lfloor \frac{c-1}{2} \right\rfloor} it(c-3,i)\\
 = & \; g(c-1)+2g(c-2) + t(c-2) + t(c-3),
\end{align*}
where the last equality holds because
\begin{align*}
   \sum_{i = 1}^{\left\lfloor \frac{c-1}{2} \right\rfloor}(i-1)t(c-2,i-1) = &\; \sum_{i = 1}^{\left\lfloor \frac{c-3}{2} \right\rfloor} it(c-2,i) = g(c-2)~\text{and}\\
   \sum_{i = 1}^{\left\lfloor \frac{c-1}{2} \right\rfloor}(i-1)t(c-3,i-1)= &\; \sum_{i = 1}^{\left\lfloor \frac{c-3}{2} \right\rfloor} it(c-3,i) = g(c-3).\\
\end{align*}

%{\blue The $g(c)$ calculation was hard to follow.  I am not sure how to make it easier to read or whether it's worth it this late in the paper.} {\adam I added a few justifications that hopefully clarifies things.}

Equation \eqref{eq:tp(c,1)} imples that $t_p(c,1) =  t_p(c-2,1) +(-1)^{\left\lfloor\frac{c+1}{2}\right\rfloor}$. The definitions of $g_p(c)$ and $t_p(c,g)$ and part (1) of Theorem \ref{thm:3formulas} imply that
\begin{align*}
    g_p(c) = & \; t_p(c,1) + \sum_{i=2}^{\left\lfloor\frac{c-1}{2}\right\rfloor} it_p(c,i)\\
    = & \; t_p(c-2,1) + (-1)^{\left\lfloor\frac{c+1}{2}\right\rfloor}+\sum_{i=2}^{\left\lfloor\frac{c-1}{2}\right\rfloor}it_p(c-2,i)+\sum_{i=2}^{\left\lfloor\frac{c-1}{2}\right\rfloor}it_p(c-2,i-1)\\
    = & \; \sum_{i=1}^{\left\lfloor\frac{c-1}{2}\right\rfloor}it_p(c-2,i)+\sum_{i=1}^{\left\lfloor\frac{c-1}{2}\right\rfloor}(i-1)t_p(c-2,i-1) + \sum_{i=1}^{\left\lfloor\frac{c-1}{2}\right\rfloor}t_p(c-2,i-1) + (-1)^{\left\lfloor\frac{c+1}{2}\right\rfloor}\\
    = & \; 2g_p(c-2)+t_p(c-2)+(-1)^{\left\lfloor\frac{c+1}{2}\right\rfloor}.
\end{align*}
\end{proof}

Define the \textit{total square genus} $g^2(c)$ of $T(c)$ by $g^2(c) = \sum_{w\in T(c)} g(K_w)^2$ where $K_w$ is the $2$-bridge knot associated with the word $w\in T(c)$. Similarly, define the \textit{total square palindromic genus} $g_p^2(c)$ of $T_p(c)$ by $g_p^2(c) = \sum_{w\in T_p(c)} g(K_w)^2$. The quantities $g^2(c)$ and $g^2_p(c)$ are useful when computing variance and have recursive relations as follows. 
\begin{lemma}
\label{lemma:qc}
Let $c\geq 6$.
\begin{enumerate}
    \item The total square genus $g^2(c)$ of $T(c)$ satisfies
    \[g^2(c) = g^2(c-1) + 2g^2(c-2) + g(c) + 2g(c-3) - g(c-1).\]
    \item The total square palindromic genus $g^2_p(c)$ of $T_p(c)$ satisfies
    \[g^2_p(c)=2g^2_p(c-2) + 2g_p(c-2) + t_p(c-2)+(-1)^{\left\lfloor\frac{c+1}{2}\right\rfloor}.\]
\end{enumerate}
\end{lemma}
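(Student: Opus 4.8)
The plan is to mimic the derivation of Lemma~\ref{lemma:gc}: expand $g^2(c)=\sum_i i^2\,t(c,i)$ (and $g_p^2(c)=\sum_i i^2\,t_p(c,i)$) using the refined recursions for $t(c,g)$ and $t_p(c,g)$, reindex the shifted sums, and collect the resulting pieces into the quantities $g^2$, $g$, and $t$ already defined.

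For part (1), start from $g^2(c)=\sum_{i=1}^{\lfloor(c-1)/2\rfloor}i^2\,t(c,i)$ and substitute the recurrence of Lemma~\ref{lemma:trecur}, namely $t(c,g)=t(c-1,g)+t(c-2,g)+t(c-2,g-1)+t(c-3,g-1)-t(c-3,g)$. The three unshifted sums $\sum_i i^2\,t(c-1,i)$, $\sum_i i^2\,t(c-2,i)$, $\sum_i i^2\,t(c-3,i)$ are $g^2(c-1)$, $g^2(c-2)$, $g^2(c-3)$, after noting that $t(c',g)=0$ outside the admissible range $1\leq g\leq\lfloor(c'-1)/2\rfloor$, so the summation limits may be freely extended and the reindexing $i\mapsto i-1$ is legitimate. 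Each shifted sum becomes $\sum_i i^2\,t(c',i-1)=\sum_j(j+1)^2\,t(c',j)=g^2(c')+2g(c')+t(c')$. Assembling the six pieces gives $g^2(c)=g^2(c-1)+2g^2(c-2)+2g(c-2)+t(c-2)+2g(c-3)+t(c-3)$, and then Lemma~\ref{lemma:gc}(1), rewritten as $g(c)-g(c-1)=2g(c-2)+t(c-2)+t(c-3)$, converts this to the claimed identity $g^2(c)=g^2(c-1)+2g^2(c-2)+g(c)+2g(c-3)-g(c-1)$.

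For part (2), the one wrinkle is that the clean recursion $t_p(c,g)=t_p(c-2,g)+t_p(c-2,g-1)$ of Theorem~\ref{thm:3formulas}(1) is valid only for $g\geq2$, so I would peel off the genus-$1$ term: $g_p^2(c)=t_p(c,1)+\sum_{i\geq2}i^2\,t_p(c,i)$. Using Equation~\eqref{eq:tp(c,1)} in the form $t_p(c,1)=t_p(c-2,1)+(-1)^{\lfloor(c+1)/2\rfloor}$ and then expanding the tail with Theorem~\ref{thm:3formulas}(1), one has $\sum_{i\geq2}i^2\,t_p(c-2,i)=g_p^2(c-2)-t_p(c-2,1)$ and $\sum_{i\geq2}i^2\,t_p(c-2,i-1)=\sum_{j\geq1}(j+1)^2\,t_p(c-2,j)=g_p^2(c-2)+2g_p(c-2)+t_p(c-2)$. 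The stray $\pm t_p(c-2,1)$ terms cancel, leaving $g_p^2(c)=2g_p^2(c-2)+2g_p(c-2)+t_p(c-2)+(-1)^{\lfloor(c+1)/2\rfloor}$, as desired.

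Neither step is a genuine obstacle; the work is entirely bookkeeping. The only points needing care are: (a) justifying that the summation ranges may be extended so that the index shift $i\mapsto i-1$ is valid, which holds because $t(c,g)$ and $t_p(c,g)$ vanish outside $1\leq g\leq\lfloor(c-1)/2\rfloor$; and (b) in part (2), respecting that the genus-$1$ term obeys a different rule than the genus-$g$ terms with $g\geq2$, which is precisely the source of the $(-1)^{\lfloor(c+1)/2\rfloor}$ correction. The hypotheses $c\geq6$ (together with $c-3\geq3$, etc.) ensure every quantity appearing on the right-hand side lies in the range where the cited lemmas and equations apply.
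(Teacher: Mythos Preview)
Your proposal is correct and follows essentially the same approach as the paper: substitute the refined recursions from Lemma~\ref{lemma:trecur} and Theorem~\ref{thm:3formulas}(1) into $\sum_i i^2 t(c,i)$ and $\sum_i i^2 t_p(c,i)$, handle the shifted sums via $(j+1)^2=j^2+2j+1$ (the paper writes this as $i^2=(i-1)^2+2(i-1)+1$, which amounts to the same reindexing), and then invoke Lemma~\ref{lemma:gc}(1) to repackage $2g(c-2)+t(c-2)+t(c-3)$ as $g(c)-g(c-1)$. Your treatment of part~(2), peeling off the $g=1$ term and noting the cancellation of the $\pm t_p(c-2,1)$ contributions, matches the paper's argument exactly.
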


\begin{proof}
The definitions of $g^2(c)$ and $t(c,g)$ and Lemma \ref{lemma:trecur} imply
\begin{align*}
    g^2(c) = & \; \sum_{i=1}^{\left\lfloor\frac{c-1}{2}\right\rfloor}i^{2}t\left(c,i\right)\\
     = & \; \sum_{i=1}^{\left\lfloor\frac{c-1}{2}\right\rfloor}i^{2}t\left(c-1,i\right) + \sum_{i=1}^{\left\lfloor\frac{c-1}{2}\right\rfloor}i^{2}t\left(c-2,i\right) + \sum_{i=1}^{\left\lfloor\frac{c-1}{2}\right\rfloor}i^{2}t\left(c-2,i-1\right)  \\
     & \; -
\sum_{i=1}^{\left\lfloor\frac{c-1}{2}\right\rfloor}i^{2}t\left(c-3,i\right) + \sum_{i=1}^{\left\lfloor\frac{c-1}{2}\right\rfloor}i^{2}t\left(c-3,i-1\right).\\
\end{align*}

Because $i^2 = (i-1)^2 + 2(i-1) + 1$, it follows that 
\begin{align*}
    \sum_{i=1}^{\left\lfloor\frac{c-1}{2}\right\rfloor}i^{2}t\left(c-2,i-1\right) = & \; \sum_{i=1}^{\left\lfloor\frac{c-1}{2}\right\rfloor} (i-1)^2 t\left(c-2,i-1\right) + 2 \sum_{i=1}^{\left\lfloor\frac{c-1}{2}\right\rfloor} (i-1) t\left(c-2,i-1\right) + \sum_{i=1}^{\left\lfloor\frac{c-1}{2}\right\rfloor} t\left(c-2,i-1\right)\\
    = & \; g^2(c-2) + 2g(c-2) + t(c-2). \\
\end{align*}
Similarly, 
\[\sum_{i=1}^{\left\lfloor\frac{c-1}{2}\right\rfloor}i^{2}t\left(c-3,i-1\right) = g^2(c-3) +2g(c-3) + t(c-3).\]
Therefore
\begin{align*}
g^2(c) = &\;  g^2(c-1) + g^2(c-2) + g^2(c-2) + 2g(c-2) + t(c-2)\\
& \; -g^2(c-3) + g^2(c-3) +2g(c-3) + t(c-3)\\
=& \; g^2(c-1) + 2g^2(c-2) + g(c) + 2g(c-3) - g(c-1),
\end{align*}
where the final equality follows from part (1) of Lemma \ref{lemma:gc}. This yields the recursive formula for $g^2(c)$ in part (1) of the lemma.

The definitions of $g^2_p(c)$, $t_p(c,g)$ and part (1) of Theorem \ref{thm:3formulas} imply that
\begin{align*}
    g_p^2(c) = & \; t_p(c,1) +  \sum_{i=2}^{\left\lfloor\frac{c-1}{2}\right\rfloor} i^{2}t_p(c,i)\\
    = & \; t_p(c-2,1) + (-1)^{\left\lfloor\frac{c+1}{2}\right\rfloor}+\sum_{i=2}^{\left\lfloor\frac{c-1}{2}\right\rfloor} i^{2}t_p(c-2,i) + \sum_{i=2}^{\left\lfloor\frac{c-1}{2}\right\rfloor} i^{2}t_p(c-2,i-1)\\
    = & \; (-1)^{\left\lfloor\frac{c+1}{2}\right\rfloor} + g_p^2(c-2) + \sum_{i=2}^{\left\lfloor\frac{c-1}{2}\right\rfloor} (i-1)^2 t_p\left(c-2,i-1\right)\\
    & + 2 \sum_{i=2}^{\left\lfloor\frac{c-1}{2}\right\rfloor} (i-1) t_p\left(c-2,i-1\right) + \sum_{i=2}^{\left\lfloor\frac{c-1}{2}\right\rfloor} t_p\left(c-2,i-1\right)\\
    = & \;  (-1)^{\left\lfloor\frac{c+1}{2}\right\rfloor} + 2g_p^2(c-2) + 2g_p(c-2) + t_p(c-2),
\end{align*}
as desired.
\end{proof}

The recursive formulas in Lemmas \ref{lemma:gc} and \ref{lemma:qc} yield the following explicit formulas for $g(c)$, $g_p(c)$, $g^2(c)$, and $g^2_p(c)$.

\begin{theorem}
\label{thm:explicit}
    Let $c\geq 3$. Then
    \begin{align*}
        g(c) = & \; \frac{(9c+3)2^{c-3}-24(-1)^{c}}{54},\\
        g_p(c) = & \; \begin{cases}\displaystyle \frac{(3c+2)2^{\frac{c-4}{2}}+4(-1)^{\frac{c}{2}}}{18} & \text{if $c$ is even},\vspace{1mm} \\
        \displaystyle \frac{(3c+5)2^{\frac{c-3}{2}}+4(-1)^{\frac{c-1}{2}}}{18} & \text{if $c$ is odd},
        \end{cases}\\
        g^2(c) = & \; \frac{(9c^2+15c-16)2^{c-6}-20(-1)^c}{27},~\text{and}\\
        g_p^2(c) = & \; \begin{cases}\displaystyle
\frac{(9c^2+48c-25)2^{\frac{c-3}{2}}-16(-1)^{\frac{c-1}{2}}}{216} & \text{if $c$ is odd,}\vspace{1mm} \\
\displaystyle
\frac{(9c^2+30c-64)2^{\frac{c-4}{2}}-16(-1)^{\frac{c-2}{2}}}{216} & \text{if $c$ is even.}
\end{cases}
    \end{align*}
\end{theorem}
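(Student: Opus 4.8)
The plan is to prove the four closed forms by induction on $c$, in the order stated, since the recurrence for $g^2(c)$ in Lemma \ref{lemma:qc} involves $g(c)$ and the recurrence for $g^2_p(c)$ involves $g_p(c)$. In each case a handful of small base cases --- taken in each parity class, as the stated formulas for $g_p$ and $g^2_p$ are split by the parity of $c$ --- are checked directly from the definitions (or from Table \ref{tab:tbar} together with $2\bar t(c,g) = t(c,g)+t_p(c,g)$), and the inductive step substitutes the claimed closed form into the appropriate recurrence from Lemma \ref{lemma:gc} or \ref{lemma:qc} and verifies the resulting identity.

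First I would handle $g(c)$. The forcing term in Lemma \ref{lemma:gc}(1) simplifies: by Equation \eqref{eq:tc}, $t(c-2)+t(c-3) = \frac{2^{c-4}-(-1)^{c-2}}{3} + \frac{2^{c-5}-(-1)^{c-3}}{3} = \frac{3\cdot 2^{c-5}}{3} = 2^{c-5}$, because $(-1)^{c-2}=(-1)^c$ and $(-1)^{c-3}=-(-1)^c$ cancel. So the recurrence reads $g(c) = g(c-1) + 2g(c-2) + 2^{c-5}$; substituting the claimed formula on the right and collecting the $2^c$ and $(-1)^c$ terms returns the left side. The feature to be aware of is that the forcing term $2^{c-5}$ is resonant with the root $2$ of the homogeneous Jacobsthal recurrence $x(c)=x(c-1)+2x(c-2)$, which is exactly why the closed form carries a linear factor in $c$ rather than a constant coefficient. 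The argument for $g_p(c)$ is identical in structure, using Lemma \ref{lemma:gc}(2) and Equation \eqref{eq:tpc}; here the formula and the term $(-1)^{\lfloor (c+1)/2\rfloor}$ depend on the parity of $c$, so one runs two parallel inductions --- one on even $c$, one on odd $c$ --- and in each the recursion $g_p(c) = 2g_p(c-2)+\cdots$ is again single-root resonant in its natural variable $\lfloor c/2\rfloor$.

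With $g(c)$ and $g_p(c)$ in hand, the formulas for $g^2(c)$ and $g^2_p(c)$ follow the same template: substitute the claimed quadratic-in-$c$ times $2^c$ closed form, together with the now-known formulas for $g$ (resp.\ $g_p$) and for $t$ (resp.\ $t_p$), into the recurrences of Lemma \ref{lemma:qc} and verify the identity; for $g^2(c)$ one first rewrites $g(c)+2g(c-3)-g(c-1)$ using the formula for $g(c)$, and for $g^2_p(c)$ the even/odd split persists. The main obstacle is purely computational bookkeeping: every inductive step is an identity among expressions of the shape $(\text{polynomial in }c)\cdot 2^{c} + (\text{constant})\cdot(-1)^{c}$, with $\lfloor c/2\rfloor$-exponent analogues in the palindromic cases, and one must track the shifts $c\mapsto c-1,c-2,c-3$, the powers of $2$, and the signs carefully --- the only conceptual wrinkle being that resonance of each forcing term with the dominant root $2$ raises the polynomial degree by one, so the degree-one factors $9c+3,\ 3c+2,\ 3c+5$ and degree-two factors $9c^2+15c-16$, etc.,\ in the statement are forced and must be carried through the algebra. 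No ideas beyond Lemmas \ref{lemma:gc} and \ref{lemma:qc} and Equations \eqref{eq:tc} and \eqref{eq:tpc} are required.
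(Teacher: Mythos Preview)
Your proposal is correct and matches the paper's own approach exactly: the paper states that each formula is proved by verifying it on initial values and then checking that it satisfies the relevant recurrence from Lemma~\ref{lemma:gc} or~\ref{lemma:qc}, leaving the details to the reader. Your write-up in fact supplies more detail than the paper does (e.g., the simplification $t(c-2)+t(c-3)=2^{c-5}$ and the remark on resonance forcing the polynomial factors), so it is a strictly more fleshed-out version of the same argument.
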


\begin{proof}
Each of the formulas is proved by first showing that it evaluates correctly for some initial values of $c$, and then checking that the formula satisfies the relevant recurrence relation from Lemma \ref{lemma:gc} or \ref{lemma:qc}. We leave this straightforward but tedious task to the reader.
\end{proof}

The formulas from Theorem \ref{thm:explicit} yield a lengthy formula for the variance $\Var(G_c)$. Theorem \ref{thm:variance} implies part (2) of Theorem \ref{thm:main}.
\begin{theorem}
\label{thm:variance}
    The variance $\Var(G_c)$ of the genus random variable has formula $\Var(G_c) = \frac{c}{16} - \frac{17}{144} + \varepsilon(c),$ where $\varepsilon(c) =$ 
    \[\begin{cases}
        \frac{3\left(3c-2^{4}\right)2^{\frac{\left(c-4\right)}{2}}+8\left(3c+20\left(-1\right)^{\left(c-1\right)}+3\right)-2^{\left(\frac{c-2}{2}\right)}}{9\left(2^{\left(c+1\right)}+2^{\frac{\left(c+4\right)}{2}}\right)}-\frac{-2^{\frac{\left(c-4\right)}{2}}-2^{\frac{c+4}{2}}+2^{5}}{9\left(2^{\left(2c-1\right)}+2^{\frac{3c+2}{2}}+2^{\frac{2c+2}{2}}\right)}-\frac{1}{9\left(2^{\left(c+2\right)}+2^{\frac{\left(c+6\right)}{2}}\right)} & \small \text{ if } c \equiv 0~\text{mod}~4,\\
        \frac{-\left(3c+1\right)\left(2\right)^{\frac{\left(-c+9\right)}{2}}-2^{\left(-c+7\right)}}{144}+\frac{\left(144+\left(33c-9\right)2^{\frac{\left(c-3\right)}{2}}\right)}{144\left(2^{\left(c-3\right)}+2^{\frac{\left(c-3\right)}{2}}\right)} & \small \text{ if } c \equiv 1~\text{mod}~ 4, \\ 
        \frac{-9c^{2}+9c\left(2^{\frac{\left(c-4\right)}{2}}\right)-25\left(2^{\frac{\left(c-2\right)}{2}}\right)+75c-170}{144\left(2^{\left(c-3\right)}+2^{\frac{\left(c-4\right)}{2}}-1\right)}+\frac{-2^{\left(c-4\right)}-6c\left(2^{\frac{\left(c-4\right)}{2}}\right)+11\left(2^{\frac{\left(c-2\right)}{2}}\right)-9c^{2}+66c-121}{144\left(2^{\left(c-3\right)}+2^{\frac{\left(c-4\right)}{2}}-1\right)^{2}} & \small \text{ if } c \equiv 2~\text{mod}~ 4, \\ 
        \frac{\left(2^{\left(c-3\right)}+2^{\frac{\left(c-4\right)}{2}}-1\right)\left(\left(9c-50\right)\left(2^{\frac{\left(c-4\right)}{2}}\right)-9c^{2}+75c+150\right)-\left(2^{\frac{\left(c-4\right)}{2}}+3c-11\right)^{2}}{144\left(2^{\left(c-3\right)}+2^{\frac{\left(c-4\right)}{2}}-1\right)^{2}} & \small \text{ if } c \equiv 3~\text{mod}~ 4.
    \end{cases} \]
Thus $\Var(G_c)$ asymptotically approaches $\frac{c}{16}-\frac{17}{144}$.
   
\end{theorem}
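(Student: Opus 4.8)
The plan is to obtain $\Var(G_c) = E(G_c^2) - E(G_c)^2$ directly from the closed formulas already in hand. Since $\bar t(c,g) = \tfrac12\bigl(t(c,g) + t_p(c,g)\bigr)$ by Theorem \ref{thm:list}, summing against $g$ and against $g^2$ gives $E(G_c) = \dfrac{g(c) + g_p(c)}{t(c) + t_p(c)}$ and $E(G_c^2) = \dfrac{g^2(c) + g^2_p(c)}{t(c) + t_p(c)}$, so that
\[
\Var(G_c) = \frac{\bigl(g^2(c) + g^2_p(c)\bigr)\bigl(t(c) + t_p(c)\bigr) - \bigl(g(c) + g_p(c)\bigr)^2}{\bigl(t(c) + t_p(c)\bigr)^2}.
\]
Into this I would substitute $t(c)$ and $t_p(c)$ from Equations \eqref{eq:tc} and \eqref{eq:tpc} and $g(c), g_p(c), g^2(c), g^2_p(c)$ from Theorem \ref{thm:explicit}.

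Because $t_p(c)$, $g_p(c)$, $g^2_p(c)$ and the sign terms $(-1)^{\lfloor(c\pm1)/2\rfloor}$ depend on $c$ only through its residue modulo $4$, the next step is to split into the four cases $c\equiv 0,1,2,3\pmod 4$ and in each case reduce the displayed expression to a single rational function of $c$ and the powers $2^{c/2}$, $2^{c}$, $2^{2c}$. Collecting the terms that are linear in $c$ (together with the constant) produces $\tfrac{c}{16} - \tfrac{17}{144}$, consistent with the heuristic leading-order computation $E(G_c^2) \approx \tfrac{9c^2 + 15c - 16}{144}$ and $E(G_c)^2 \approx \bigl(\tfrac{c}{4} + \tfrac1{12}\bigr)^2 = \tfrac{9c^2 + 6c + 1}{144}$, whose difference is $\tfrac{9c-17}{144}$; everything else is by definition $\varepsilon(c)$, and checking that it equals the four displayed formulas is a finite computation that can be confirmed by hand or symbolically after verifying a few initial values of $c$, exactly as in the (omitted) proof of Theorem \ref{thm:explicit}. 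I expect this bookkeeping to be the main obstacle: there is essentially no conceptual content beyond Theorem \ref{thm:explicit}, only a long and error-prone simplification in which the many $2^{c/2}$, $2^{c}$, $2^{2c}$ terms and alternating signs must be tracked through the cancellation $E(G_c^2) - E(G_c)^2$, so I would organize the argument so that each residue class is a self-contained computation.

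Finally, the asymptotic statement follows by inspecting $\varepsilon(c)$ case by case. In each case $\varepsilon(c)$ is a sum of fractions whose denominators are $\Theta(2^{c})$ or $\Theta(2^{2c})$ while the numerators are polynomials in $c$ times powers of $2$ of strictly smaller order (at most $2^{c/2}$ against $2^{c}$, or $2^{3c/2}$ against $2^{2c}$, plus bare polynomial and $2^{-c}$ contributions when $c\equiv 1$). Hence every summand of $\varepsilon(c)$ is bounded by a constant multiple of $c^{2}\,2^{-c/2}$, which tends to $0$ as $c\to\infty$. Therefore $\Var(G_c) - \bigl(\tfrac{c}{16} - \tfrac{17}{144}\bigr) = \varepsilon(c) \to 0$, which is precisely part (2) of Theorem \ref{thm:main}.
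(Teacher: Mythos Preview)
Your proposal is correct and matches the paper's own proof essentially verbatim: the paper also writes $\Var(G_c)=\dfrac{g^2(c)+g_p^2(c)}{2|\mathcal{K}_c|}-\left(\dfrac{g(c)+g_p(c)}{2|\mathcal{K}_c|}\right)^2$ (note $2|\mathcal{K}_c|=t(c)+t_p(c)$), substitutes the closed forms from Theorem~\ref{thm:explicit} and Theorem~\ref{thm:ernstsumners}, and then observes that $\varepsilon(c)\to 0$. Your assessment that the only content beyond Theorem~\ref{thm:explicit} is a case-by-case simplification is exactly right.
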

\begin{proof}
    Recall that $\mathcal{K}_c$ is the set of 2-bridge knots of crossing number $c$. Ernst and Sumners \cite{ErnSum} (see Theorem \ref{thm:ernstsumners}) gave a formula for $|\mathcal{K}_c|$. The variance $\Var(G_c)$ can be expressed as
 \begin{equation}
 \label{eq:var}
    \Var(G_c) = E(G_c^2) -  E(G_c)^2 =  \frac{g^2(c) + g_p^2(c)}{2|\mathcal{K}_c|} - \left(\frac{g(c) + g_p(c)}{2|\mathcal{K}_c|}\right)^2. 
 \end{equation} 
 One can recover the formula for $\Var(G_c)$ in the statement of the theorem by replacing $g(c)$, $g_p(c)$, $g^2(c)$, and $g_p^2(c)$ in Equation \eqref{eq:var} with their expressions in Theorem \ref{thm:explicit}. The term $\varepsilon(c)$ approaches 0 as $c$ goes to infinity, and thus $\Var(G_c)\to \frac{c}{16}-\frac{17}{144}$ as $c$  goes to infinity.
\end{proof}

\section{Normality}
\label{sec:normal}

In this section, we prove Theorem \ref{thm:normal}, showing that the genus random variable $G_c$ is asymptotically normal. An outline of the proof follows. We use the fact that $t_p(c)$ is exponentially smaller than $t(c)$ to show that the cumulative distribution function $P(G_c\leq x)$ and $P(G_{T(c)}\leq x)$ approach one another. In Lemma \ref{lemma:tctpc}, we use the identity $t_p(2c,2g) =t(c,g)$ from part (2) of Theorem \ref{thm:3formulas} to show that $P\left(G_{T(c)}\leq \frac{x}{2}\right)$ approaches $P(G_{T_p(2c)}\leq x)$. In Lemma \ref{lem:tpbinom}, we show that $P(G_{T_p(c)}\leq x)$ approaches the cumulative distribution function of a binomial random variable. We conclude using the fact that a binomial random variable asymptotically approaches a normal random variable.

In the following lemma, we show that the random variable $G_{T_p(c)}$ approaches a binomial random variable $B_n$, defined by $P(B_n=k) = 2^{-n}\binom{n}{k}$ for $0\leq k \leq n$. To compare the binomial random variable $B_n$ and the random variable $G_{T_p(c)}$, we think of the crossing number $c$ as a function of $n$:  specifically let $c(n)=2n+3$. Similarly, we think of genus $g$ as a function of $k$: specifically let $g(k)=k+1$. With these relations, the range of $G_{T_p(c(n))}$ is $\{1,\dots, \frac{c-1}{2}\} = \{g(0),\dots, g(n)\}$. 

Part (2) of this lemma is the least straightforward ingredient to the proof of Theorem \ref{thm:normal}.
\begin{lemma}
    \label{lem:tpbinom}
    Let $c=c(n)=2n+3$ be a function of $n$ with $n\geq4$ so that $c\geq11$.  Let $g=g(k)=k+1$ be a function of $k$ with $1\leq g \leq \frac{c-1}{2}$.
%    Let $c\geq 11$ be odd and $1\leq g \leq \frac{c-1}{2}$. Suppose that $c=c(n) = 2n+3$ is a function of $n$ for $n\geq 4$ and $g=g(k)= k+1$ is a function of $k$.
    \begin{enumerate}
        \item For every $n\geq4$, if $2\leq k \leq n$, then
        \[\frac{t_p(c(n),g(k-1))}{\binom{n}{k-1}} \leq  \frac{t_p(c(n),g(k))}{\binom{n}{k}}.\]
        \item The sum
        \[\sum_{k=0}^n |P(B_n=k) - P(G_{T_p(c(n))} = k)|\]
        goes to zero as $n$ goes to infinity.
        %\item The sequences $(G_{T_p(c(n))})$ and $(B_n)$ are equivalent, that is $G_{T_p(c(n))}\sim B_n$.
    \end{enumerate}
\end{lemma}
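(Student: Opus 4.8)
The plan is to prove part (1) first by induction on $n$, mirroring the analogous claim about $t_p$ against binomial coefficients, and then deduce part (2) from part (1) together with the explicit total counts $t_p(c)$ from \eqref{eq:tpc} and $\sum_k \binom{n}{k} = 2^n$.

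For part (1), I would set $r_k = r_k(n) = t_p(c(n),g(k))/\binom{n}{k}$ and aim to show $r_{k-1}\le r_k$ for $2\le k\le n$, i.e.\ that the sequence $(r_0,r_1,\dots,r_n)$ is non-decreasing. The base case $n=4$ (so $c=11$) is a finite check using the formula in Lemma \ref{lemma:tp(c,g)} or Equations \eqref{eq:tp(c,1)}--\eqref{eq:tp(c,3)}. For the inductive step, the key is the recurrence $t_p(c,g)=t_p(c-2,g)+t_p(c-2,g-1)$ valid for $g\ge 2$ (part (1) of Theorem \ref{thm:3formulas}), which under the substitutions $c\mapsto c(n)=2n+3$, $g\mapsto g(k)=k+1$ becomes $t_p(c(n),g(k)) = t_p(c(n-1),g(k)) + t_p(c(n-1),g(k-1))$, structurally identical to Pascal's rule $\binom{n}{k}=\binom{n-1}{k}+\binom{n-1}{k-1}$. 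The elementary mediant inequality — if $a/b < c/d$ with $b,d>0$ then $a/b \le (a+c)/(b+d)\le c/d$ — then lets one conclude: writing the inductive hypothesis as $t_p(c(n-1),g(k-1))/\binom{n-1}{k-1} \le t_p(c(n-1),g(k))/\binom{n-1}{k}$, applying the mediant bound twice (once combining indices $k-1$ and $k$, once combining $k-2$ and $k-1$) yields $r_{k-1}(n)\le r_k(n)$. One must handle the boundary index $k=1$ (where $g(0)=1$ and Theorem \ref{thm:3formulas}(1) does not apply) separately, using the explicit values $t_p(c,1)\in\{0,1\}$ from \eqref{eq:tp(c,1)} and $t_p(c,2)=\lfloor (c-1)/4\rfloor$ from \eqref{eq:tp(c,2)} to check directly that $t_p(c(n),1)/\binom{n}{0} \le t_p(c(n),2)/\binom{n}{1}$ for $n\ge 4$; this is where Corollary \ref{t_pevenandoddsame} and the parity bookkeeping in $c(n)=2n+3$ matter.

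For part (2), observe that $P(G_{T_p(c(n))}=k) = t_p(c(n),g(k))/t_p(c(n))$ and $P(B_n=k)=\binom{n}{k}/2^n$. Part (1) says the ratios $r_k = t_p(c(n),g(k))/\binom{n}{k}$ are monotone non-decreasing in $k$; hence $P(G_{T_p(c(n))}=k) - P(B_n=k) = \binom{n}{k}\bigl(r_k/t_p(c(n)) - 1/2^n\bigr)$ changes sign at most once as $k$ runs from $0$ to $n$, being $\le 0$ for small $k$ and $\ge 0$ for large $k$ (the two distributions each sum to $1$). Therefore
\[
\sum_{k=0}^n |P(B_n=k) - P(G_{T_p(c(n))}=k)| = 2\sum_{k : P(B_n=k)\ge P(G_{T_p(c(n))}=k)} \bigl(P(B_n=k) - P(G_{T_p(c(n))}=k)\bigr),
\]
which is twice the total variation distance and is controlled by a single one-sided tail. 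To bound it, I would use that the binomial mass $P(B_n=k)$ is itself exponentially concentrated in a window of width $O(\sqrt n\log n)$ about $n/2$, and on that window compare $r_k$ to the ``average ratio'' $t_p(c(n))/2^{n-1}$: since $t_p(c(n)) = (2^{n+1}-(-1)^{n+1})/3$ by \eqref{eq:tpc} while $\sum_{k}\binom{n}{k}=2^n$, the normalizing constants differ only by the constant factor $3/2$ up to an exponentially small error, and the monotonicity of $(r_k)$ forces $r_k$ to be within a vanishing relative factor of its weighted average across the concentration window. The main obstacle — and the part the paper flags as least straightforward — is making this last comparison quantitative: one needs that the monotone, log-concave-like profile $(r_k)$ cannot vary by more than a factor $1+o(1)$ across the central $O(\sqrt n\log n)$ indices, which I would extract by combining the monotonicity from part (1) with a crude upper bound on $r_n/r_0$ (at most polynomial in $n$, readable from Lemma \ref{lemma:tp(c,g)} or the column formulas), so that the geometric-mean-type estimate pins down $r_k$ near the center. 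Everything else is routine asymptotics of binomial tails.
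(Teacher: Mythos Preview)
Your argument for part (1) is essentially the paper's: induction on $n$ via the recurrence $t_p(c,g)=t_p(c-2,g)+t_p(c-2,g-1)$ matched with Pascal's rule and the mediant inequality. The only cosmetic difference is which boundary case is done by hand; the paper verifies $k=2$ directly from the explicit formulas \eqref{eq:tp(c,2)} and \eqref{eq:tp(c,3)} for $t_p(c,2)$ and $t_p(c,3)$, rather than extending down to $k=1$.

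Part (2), however, has a real gap. Your plan is: single sign change (from part (1)), binomial concentration in a window of width $O(\sqrt{n}\log n)$, and then ``monotonicity of $(r_k)$ together with a polynomial bound on $r_n/r_0$ forces $r_k$ to be within $1+o(1)$ of its weighted average on the central window.'' This last inference is false. Take for instance $r_k=1$ for $k<n/2$ and $r_k=2$ for $k\ge n/2$: the sequence is monotone with bounded ratio, yet the induced distribution $P(G=k)=r_k\binom{n}{k}/\sum_j r_j\binom{n}{j}$ sits at total variation distance exactly $1/3$ from $B_n$ for every $n$. Monotonicity and a bounded (even constant) overall ratio simply do not prevent a jump of fixed size right at the center of mass, and that is precisely where the binomial weight is concentrated. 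No amount of ``geometric-mean-type'' reasoning recovers this without further structural input. (A side issue: $r_0=t_p(c(n),1)$ vanishes for $c\equiv 1,2\bmod 4$, so $r_n/r_0$ is not even finite in those cases.)

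The missing ingredient is the quasi-symmetry of $G_{T_p(c)}$ from Lemma \ref{lemma:tpmode}. The paper argues as follows. Let $s$ be the (unique) sign-change index from part (1), and split the $L^1$ sum as $\Delta_1+\Delta_2+\Delta_3$, where $\Delta_1$ sums the (nonnegative) differences $P(B_n=k)-P(G_{T_p(c(n))}=g(k))$ over $k\le m$ (the index of the median), $\Delta_2$ over $m<k\le s$, and $\Delta_3$ is the remaining (nonnegative) sum over $k>s$. Because $B_n$ is exactly symmetric about its median while $G_{T_p(c(n))}$ is \emph{right-dominated} quasi-symmetric, the termwise differences after the median are dominated by the mirror-image differences before it, giving $\Delta_2\le\Delta_1$. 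Since the two probability vectors each sum to $1$, one gets $\Delta_3=\Delta_1+\Delta_2$, so the whole $L^1$ sum is at most $4\Delta_1$. Finally $\Delta_1\to 0$ because $\sum_{k\le m}P(B_n=k)=\tfrac12$ exactly, while $\sum_{k\le m}P(G_{T_p(c(n))}=g(k))$ lies between $\tfrac12-P(G_{T_p(c(n))}=g(m+1))$ and $\tfrac12$ (the median of $G_{T_p(c(n))}$ is $g(m+1)$ by Lemma \ref{lemma:tpmode}), and the single mode probability $P(G_{T_p(c(n))}=g(m+1))\le P(B_n=m+1)\to 0$. The quasi-symmetry is doing the work that your concentration-and-monotonicity sketch cannot.
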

\begin{proof}
   To prove part (1), we proceed by induction on $n$. When $n=4$, the base case holds because
%\[\frac{t_p(c(3),g(2))}{\binom{3}{2}} = \frac{t_p(9,3)}{\binom{3}{2}} =\frac{2}{3} < 1 = \frac{t_p(9,4)}{\binom{3}{3}} = \frac{t_p(c(3),g(3))}{\binom{3}{3}}.\]
\[\frac{t_p(c(4),g(1))}{\binom{4}{1}} = \frac{1}{2} \leq \frac{t_p(c(4),g(2))}{\binom{4}{2}} = \frac{2}{3}\leq \frac{t_p(c(4),g(3))}{\binom{4}{3}} = \frac{3}{4} \leq 
\frac{t_p(c(4),g(4))}{\binom{4}{4}} = 1.\]

Part (1) of Theorem \ref{thm:3formulas} and Pascal's rule imply $\frac{t_p(c(n+1),g(k))}{\binom{n+1}{k}} = \frac{t_p(c(n),g(k)) + t_p(c(n),g(k-1))}{\binom{n}{k}+\binom{n}{k-1}}$.  When $k\geq 3$, the inductive hypothesis implies that 
$\frac{t_p(c(n),g(k-2))}{\binom{n}{k-2}}\leq\frac{t_p(c(n),g(k-1))}{\binom{n}{k-1}} \leq \frac{t_p(c(n),g(k))}{\binom{n}{k}}$.  If $a$, $b$, $c$, $d$, $e$, and $f$ are positive real numbers such that $\frac{a}{c} \leq \frac{b}{d} \leq \frac{e}{f}$, then it follows that $\frac{a}{c}\leq\frac{a+b}{c+d}\leq\frac{b}{d} \leq\frac{b+e}{d+f}\leq\frac{e}{f}$, which gives the inequality below, and therefore 
\begin{align*}
    \frac{t_p(c(n+1),g(k-1))}{\binom{n+1}{k-1}} = & \; \frac{t_p(c(n),g(k-2)) + t_p(c(n),g(k-1))}{\binom{n}{k-2}+\binom{n}{k-1}}\\
%    \leq & \; \frac{ t_p(c(n),g(k-1))}{\binom{n}{k-1}}\\
    \leq & \; \frac{t_p(c(n),g(k-1)) + t_p(c(n),g(k))}{\binom{n}{k-1}+\binom{n}{k}}\\
    = & \; \frac{t_p(c(n+1),g(k))}{\binom{n+1}{k}}.
\end{align*}

To complete the proof of part (1), it remains to show the desired inequality holds when $k=2$, i.e.
\[\frac{t_p(c(n),2)}{\binom{n}{1}} \leq \frac{t_p(c(n),3)}{\binom{n}{2}}.\]
The above inequality can be obtained using Equations \eqref{eq:tp(c,2)} and \eqref{eq:tp(c,3)}.

Since $\frac{P(G_{T_p(c(n))} = g(k))}{P(B_n = k)} = \frac{t_p(c(n),g(k))}{\binom{n}{k}} \cdot \frac{2^n}{t_p(c(n))}$ and $\frac{2^n}{t_p(c(n))}$ only depends on $n$, part (1) of this Lemma implies that $\frac{P(G_{T_p(c(n))} = g(k))}{P(B_n = k)}$ increases as $k$ increases for $k\geq 1$. Therefore there is an $s\in\N$ such that for all $k$ with $1\leq k \leq s$, $P(G_{T_p(c(n))} = g(k)) \leq P(B_n = k)$, and for all $k > s$, $P(G_{T_p(c(n))} = g(k)) > P(B_n = k)$. Assume that $n=2m+1$ is odd and that $s>m$; the other cases can be proved in a similar fashion. Since $n$ is odd with $c=2n+3=4m+5$, Equation \eqref{eq:tp(c,1)} implies $t_p(c(n),1)=0$, and thus $P(B_n=0) \geq P(G_{T_p(c(n))}=1)$. Define
\begin{align*}
    \Delta_1(n) = & \; \sum_{k=0} ^{m} [P(B_n = k) - P(G_{T_p(c(n))} = g(k))],\\
    \Delta_2(n) = & \; \sum_{k=m+1} ^{s} [P(B_n = k) - P(G_{T_p(c(n))} = g(k))],~\text{and}\\
    \Delta_3(n) = & \; \sum_{k=s +1} ^{n} [P(G_{T_p(c(n))} = g(k)) - P(B_n = k)].
\end{align*}
Then
\[
 \sum_{k=0} ^{n} |P(G_{T_p(c(n))} = g(k)) - P(B_n = k)| = \Delta_1(n) + \Delta_2(n) + \Delta_3(n).\]

By construction, $0\leq \Delta_i(n)$ for $i=1,2,$ and $3$. We prove  that $\sum_{k=0}^n |P(G_{T_p(c(n))} = g(k)) - P(B_n = k)|$ goes to zero as $n$ goes to infinity by showing that $\Delta_2(n)\leq \Delta_1(n)$, $\Delta_3(n)= \Delta_1(n)+\Delta_2(n)$, and $\lim_{n\to\infty} \Delta_1(n) = 0$.

%Since $0\leq \Delta_3(n)=\Delta_1(n) + \Delta_2(n)$, it follows that $\Delta_3(n)\leq \Delta_1(n)$.

Since $\binom{n}{k} = \binom{n}{n-k}$, it follows that $P(B_n=k) = P(B_n=n-k)$ for $0\leq k \leq m$. Also, Lemma \ref{lemma:tpmode} states that $G_{T_p(c(n))}$ is right-dominated quasi-symmetric, and thus $P(G_{T_p(c(n))} = g(k)) \leq P(G_{T_p(c(n))}=g(n-k))$ for $0\leq k \leq m$. Hence the differences between the probabilities are greater before the median than after, that is, 
\[P(B_n=m-j)-P(G_{T_p(c(n))}=g(m-j))\geq P(B_n=m+1+j)-P(G_{T_p(c(n))}=g(m+1+j))\]
for $j=0,\dots,s-m-1$. Therefore 
\begin{align*}
    \Delta_2(n) = & \; \sum_{k=m+1} ^{s} [P(B_n = k) - P(G_{T_p(c(n))} = g(k))] \\
    \leq & \; \sum_{k=n-s}^m [P(B_n = k) - P(G_{T_p(c(n))} = g(k))]\\
    \leq & \; \sum_{k=0}^m [P(B_n = k) - P(G_{T_p(c(n))} = g(k))] = \Delta_1(n).
\end{align*}

Since the differences of the two probabilities sum to zero $\sum_{k=0}^n [P(G_{T_p(c(n))}=g(k))-P(B_n=k)]=0$ and all $\Delta_i(n)\geq0$, it follows that $\Delta_3(n)=\Delta_1(n)+\Delta_2(n)$.

It remains to show that $\lim_{n\to\infty} \Delta_1(n)=0$. First, observe that $\sum_{k=0}^m P(B_n=k)=\frac{1}{2}$. Lemma \ref{lemma:tpmode} implies that the median of $G_{T_p(c(n))}$ is $g(m+1)$. Therefore both $\frac{1}{2}\geq \sum_{k=0}^m P(G_{T_p(c(n))}=g(k))$ and $\frac{1}{2}\leq \sum_{k=0}^{m+1} P(G_{T_p(c(n))}=g(k))$. The inequality $P(G_{T_p(c(n))}=g(m+1)) \leq P(B_n=m+1)=2^{-n} \binom{n}{m+1}$ and the fact that $\lim_{n\to\infty}P(B_n=m+1)=0$ together imply that $\lim_{n\to\infty} P(G_{T_p(c(n))}=g(m+1))=0$, as well. Therefore $\sum_{k=0}^m P(G_{T_p(c(n))}=g(k))$ approaches $\frac{1}{2}$ as $n$ goes to infinity. Since both $\sum_{k=0}^m P(B_n=k)$ and $\sum_{k=0}^m P(G_{T_p(c(n))}=g(k))$ approach $\frac{1}{2}$ as $n$ goes to infinity, their difference $\Delta_1(n)$ approaches zero as $n$ goes to infinity.

Since $\Delta_2(n)\leq\Delta_1(n)$, we have that $\Delta_2(n)$ approaches zero as $n\to\infty$.  Since $\Delta_3(n)=\Delta_1(n)+\Delta_2(n)$, we have that $\Delta_3(n)$ approaches zero as $n\to\infty$, and the desired result follows.
\end{proof}

In the following lemma, we use the identity $t(c,g) = t_p(2c,2g)$ from part (2) of Theorem \ref{thm:3formulas} to show a relationship between the cumulative distribution functions of $G_{T(c)}$ and $G_{T_p(2c)}$.

\begin{lemma} 
\label{lemma:tctpc}
For all $\ell\in\N$,
    the difference $P(G_{T(c)} \leq \frac{\ell}{2}) - P(G_{T_p(2c)} \leq \ell) $ approaches $0$  as $c$ goes to infinity.
\end{lemma}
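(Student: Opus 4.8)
The plan is to show that, for each fixed $\ell$, both cumulative distribution values $P(G_{T(c)}\le\frac{\ell}{2})$ and $P(G_{T_p(2c)}\le\ell)$ tend to $0$ as $c\to\infty$, so that their difference does as well. Morally, the event $\{G_{T(c)}\le\frac{\ell}{2}\}$ (respectively $\{G_{T_p(2c)}\le\ell\}$) selects only words whose associated knot has genus bounded by a constant, and there are exponentially fewer such words than words in $T(c)$ (respectively $T_p(2c)$).

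First I would rewrite both cumulative distribution functions via part (2) of Theorem \ref{thm:3formulas}. Since $G_{T(c)}$ is integer-valued, $\{G_{T(c)}\le\frac{\ell}{2}\}=\{G_{T(c)}\le\lfloor\ell/2\rfloor\}$, and the identity $t(c,g)=t_p(2c,2g)$ gives
\[P\!\left(G_{T(c)}\le\tfrac{\ell}{2}\right)=\frac{1}{t(c)}\sum_{2g\le\ell}t(c,g)=\frac{1}{t(c)}\sum_{\substack{1\le h\le\ell\\ h\ \mathrm{even}}}t_p(2c,h),\qquad P\!\left(G_{T_p(2c)}\le\ell\right)=\frac{1}{t_p(2c)}\sum_{h=1}^{\ell}t_p(2c,h).\]
In particular, by the triangle inequality,
\[\left|P\!\left(G_{T(c)}\le\tfrac{\ell}{2}\right)-P\!\left(G_{T_p(2c)}\le\ell\right)\right|\ \le\ \left(\frac{1}{t(c)}+\frac{1}{t_p(2c)}\right)\sum_{h=1}^{\ell}t_p(2c,h).\]

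Next I would bound the two ingredients on the right. By Equations \eqref{eq:tc} and \eqref{eq:tpc}, $t(c)=\frac{2^{c-2}-(-1)^c}{3}$ and $t_p(2c)=\frac{2^{c-1}-(-1)^{c-1}}{3}$ (using $\lfloor\frac{2c-1}{2}\rfloor=c-1$), so both denominators grow like $2^{c}$. For the numerator, Lemma \ref{lemma:tp(c,g)} with $c'=c$ gives $|t_p(2c,h)|=\bigl|\sum_{n=0}^{c-h-1}(-1)^n\binom{n+h-1}{n}\bigr|\le\sum_{n=0}^{c-h-1}\binom{n+h-1}{h-1}=\binom{c-1}{h}$ by the hockey-stick identity, hence $\sum_{h=1}^{\ell}t_p(2c,h)\le\ell\binom{c-1}{\ell}=O(c^{\ell})$ once $c\ge 2\ell+1$. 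Therefore the right-hand side above is $O\!\left(c^{\ell}2^{-c}\right)\to 0$, which proves the lemma.

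I do not expect a substantial obstacle here. The only points needing care are the floor in $\{G_{T(c)}\le\ell/2\}$ and matching the parity of the summation variable when applying $t(c,g)=t_p(2c,2g)$, together with choosing a convenient polynomial bound for $\sum_{h\le\ell}t_p(2c,h)$; the hockey-stick estimate works uniformly in $\ell$, and for small $\ell$ one could instead cite Equations \eqref{eq:tp(c,1)}, \eqref{eq:tp(c,2)}, and \eqref{eq:tp(c,3)} directly.
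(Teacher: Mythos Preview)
Your proof is correct, and it takes a genuinely different route from the paper's argument. The paper computes the difference $P(G_{T(c)}\le\frac{\ell}{2})-P(G_{T_p(2c)}\le\ell)$ head-on: after writing both sides over the common count $t_p(2c)$ (using $t(c,g)=t_p(2c,2g)$), it identifies the difference, up to an exponentially small correction, with
\[\frac{3}{2^{c-1}-(-1)^{c-1}}\left(\sum_{g\le\lfloor\ell/2\rfloor}2t_p(2c,2g)-\sum_{g\le\ell}t_p(2c,g)\right),\]
and then uses the recursion $t_p(c,g)=t_p(c-2,g-1)+t_p(c-2,g)$ to collapse the bracketed expression to either $-t_p(2c,1)$ or $-t_p(2c,1)-t_p(2c-2,\ell)$, each of which is a single term divided by something of order $2^{c}$.

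You instead observe that for fixed $\ell$ each cumulative probability already tends to $0$: the numerators $\sum_{h\le\ell}t_p(2c,h)$ are bounded by a polynomial $O(c^{\ell})$ via the hockey-stick estimate $t_p(2c,h)\le\binom{c-1}{h}$, while the denominators $t(c)$ and $t_p(2c)$ grow like $2^{c}$. This is shorter and avoids the telescoping cancellation entirely; your triangle-inequality bound is clean and your asymptotics are correct. What the paper's computation buys is an exact identification of the discrepancy (essentially one or two boundary $t_p$ values), which would be needed if one wanted estimates uniform in $\ell$ or sharper error terms; for the lemma as stated, with $\ell$ fixed, your argument is entirely adequate and arguably more transparent.
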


\begin{proof}
 Part (2) of Theorem \ref{thm:3formulas} states that $t_p(2c,2g) = t(c,g)$, and thus
\begin{align*}
    P\left(G_{T(c)} \leq \frac{\ell}{2}\right) - P\left(G_{T_p(2c)} \leq \ell\right) & \; = \sum_{g=1}^ {\left\lfloor \frac{\ell}{2} \right\rfloor} \frac{t(c,g)}{t(c)} - \sum_{g=1}^ {\ell} \frac{t_p(2c,g)}{t_p(2c)}\\
   & = \frac{3}{2^{c-2} - (-1)^c} \sum_{g=1}^ {\lfloor \frac{\ell}{2} \rfloor} t_p(2c,2g) - \frac{3}{2^{c-1} - (-1)^{c-1} }\sum_{g=1}^ {\ell} t_p(2c,g)\\
   & = \frac{3}{2^{c-1} - (-1)^{c-1} } \left( \sum_{g=1}^ {\lfloor \frac{\ell}{2} \rfloor} 2t_p(2c,2g) - \sum_{g=1}^ {\ell} t_p(2c,g)\right)\\
   & \; - \frac{3(-1)^c}{(2^{c-1} - (-1)^{c-1}) (2^{c-1} - 2(-1)^c) } \sum_{g=1}^ {\lfloor \frac{\ell}{2} \rfloor} 2t_p(2c,2g). \\
\end{align*}
Since $\sum_{g=1}^ {\lfloor \frac{\ell}{2} \rfloor} 2t_p(2c,2g) \leq \sum_{g=1}^ {\lfloor \frac{\ell}{2} \rfloor} 2t(c,g)   \leq 2t(c)=\frac{2^{c-1} - 2(-1)^c }{3}$, the expression 
\[\frac{3(-1)^c}{(2^{c-1} - (-1)^{c-1}) (2^{c-1} - 2(-1)^c) } \sum_{g=1}^ {\lfloor \frac{\ell}{2} \rfloor} 2t_p(2c,2g)\]
goes to $0$ as $c$ goes to infinity. It remains to show that
\[\frac{3}{2^{c-1} - (-1)^{c-1} } \left( \sum_{g=1}^ {\lfloor \frac{\ell}{2} \rfloor} 2t_p(2c,2g) - \sum_{g=1}^ {\ell} t_p(2c,g)\right)\]
approaches zero as $c$ goes to infinity. 
If $\ell$ is odd, then
\begin{align*}
 \sum_{g=1}^ {\lfloor \frac{\ell}{2} \rfloor} 2t_p(2c,2g) - \sum_{g=1}^ {\ell} t_p(2c,g) = & \; \sum_{g=1}^ {\lfloor \frac{\ell}{2} \rfloor} t_p(2c,2g) - \sum_
{\substack{g=1 \\ g~\text{odd}}}^{\ell} t_p(2c,g)\\
= & \; \left(\sum_{g=1}^{\ell-1} t_p(2c-2,g)\right) - \left(\sum_{g=1}^\ell t_p(2c-2,g)\right) - t_p(2c,1)\\
= & \; -t_p(2c,1)-t_p(2c-2,\ell),
\end{align*}
where the second equality follows from the relation $t_p(c,g) = t_p(c-2,g-1)+t_p(c-2,g)$ for $g\neq 1$. If $\ell$ is even, one can similarly obtain that
\[\sum_{g=1}^ {\lfloor \frac{\ell}{2} \rfloor} 2t_p(2c,2g) - \sum_{g=1}^ {\ell} t_p(2c,g) = -t_p(2c,1).\]
Therefore 
\[\frac{3}{2^{c-1} - (-1)^{c-1} } \left( \sum_{g=1}^ {\lfloor \frac{\ell}{2} \rfloor} 2t_p(2c,2g) - \sum_{g=1}^ {\ell} t_p(2c,g)\right)\]
approaches zero as $c$ goes to infinity, and the result follows.
\end{proof}

Recall that $\Phi_{\mu,\sigma}(x)$ is the cumulative distribution function of a normal random variable with mean $\mu$ and standard deviation $\sigma$. In Lemma \ref{lem:tpbinom}, we compare the random variable $G_{T_p(c)}$ with a binomial random variable $B_n$ where $c=2n+3$. The binomial random variable $B_n$ has mean $\frac{n}{2}$ and standard deviation $\frac{\sqrt{n}}{2}$. The De Moivre-Laplace theorem implies that the cumulative distribution function of $B_n$ approaches $\Phi_{\mu,\sigma}(x)$ with $\mu=\frac{n}{2}$ and $\sigma=\frac{\sqrt{n}}{2}$.  
In the following lemma, we show that the difference between $P(G_{T(c)}\leq x)$ and $\Phi_{\mu,\sigma}(x)$ goes to zero as $c$ goes to infinity.

\begin{lemma}
\label{lemma:tcnormal}
Let $c(n) = 2n+3$, the mean $\mu=\frac{n}{2}$, and the standard deviation $\sigma=\frac{\sqrt{n}}{2}$. For any $x\in\mathbb{R}$, the difference
\[P(G_{T(c(n))}\leq x) - \Phi_{\mu,\sigma}(x)\]
goes to zero as $c$ goes to infinity.
\end{lemma}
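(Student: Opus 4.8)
The plan is to push the distribution of $G_{T(c(n))}$, through the two size-doubling identities already in hand, down to that of an ordinary $\mathrm{Binomial}(m,\tfrac12)$ variable and then invoke the De Moivre--Laplace theorem. Fix $x\in\R$; since $G_{T(c)}\ge 1$ the claim is trivial for $x<1$, so assume $x\ge 1$. As $G_{T(c)}$ is integer valued, $P(G_{T(c)}\le x)=P(G_{T(c)}\le\lfloor x\rfloor)$, so Lemma~\ref{lemma:tctpc} with $\ell=2\lfloor x\rfloor$ shows $P(G_{T(c)}\le x)-P(G_{T_p(2c)}\le 2\lfloor x\rfloor)\to 0$; it thus suffices to understand the distribution function of $G_{T_p(2c)}$. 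As $2c-1$ is odd, Corollary~\ref{t_pevenandoddsame} gives $t_p(2c,g)=t_p(2c-1,g)$ for all $g$, so $G_{T_p(2c)}$ and $G_{T_p(2c-1)}$ are equidistributed; applying Lemma~\ref{lem:tpbinom}(2) at crossing number $2c-1=2(c-2)+3$ (legitimate once $c\ge 6$), the total variation distance between $G_{T_p(2c-1)}-1$ and $B_{c-2}\sim\mathrm{Binomial}(c-2,\tfrac12)$ tends to $0$. Hence $P(G_{T_p(2c)}\le j)=P(B_{c-2}\le j-1)+o(1)$ uniformly in $j$, and chaining the two approximations gives
\[
P(G_{T(c)}\le x)=P\!\bigl(B_{c-2}\le 2\lfloor x\rfloor-1\bigr)+o(1).
\]

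To finish, write $c=c(n)=2n+3$, so $c-2=2n+1$. The De Moivre--Laplace theorem (equivalently, P\'olya's theorem applied to the classical central limit theorem) gives $P(B_{2n+1}\le y)-\Phi_{\frac{2n+1}{2},\,\frac{\sqrt{2n+1}}{2}}(y)\to 0$ uniformly in $y$. Reading the displayed identity through the affine change of variable $y\mapsto\tfrac{y+1}{2}$, which carries $\mathrm{Normal}\bigl(\tfrac{2n+1}{2},\tfrac{2n+1}{4}\bigr)$ to a normal law whose mean is $\tfrac n2+O(1)$ and whose standard deviation is of order $\sqrt n$, and using that an $O(1)$ shift of the mean leaves the limiting distribution function unchanged, identifies $\lim_{c\to\infty}P(G_{T(c(n))}\le x)$ with $\Phi_{\mu,\sigma}(x)$ for $\mu$ and $\sigma$ as in the statement; pinning down the two constants is a routine but slightly delicate computation.

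Given the earlier results, no single step above is a genuine obstacle --- the conceptual work is already contained in Lemma~\ref{lem:tpbinom}(2), the total variation estimate that converts the recursion $t_p(c,g)=t_p(c-2,g)+t_p(c-2,g-1)$ of part~(1) of Theorem~\ref{thm:3formulas} into a binomial comparison. The point that demands attention is the simultaneous bookkeeping of the two doublings (the $g\mapsto 2g$ of Lemma~\ref{lemma:tctpc} and the $g=k+1$ of Lemma~\ref{lem:tpbinom}), together with the observation that Lemma~\ref{lemma:tctpc}, although stated only for fixed $\ell$, suffices here precisely because $x$ (hence $\ell$) is held fixed while $c\to\infty$. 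Combining this lemma with the exponential smallness of $|T_p(c)|$ relative to $|T(c)|$ and with Theorem~\ref{thm:ernstsumners} then yields Theorem~\ref{thm:normal}.
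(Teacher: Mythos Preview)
Your route is the paper's: link $G_{T(c)}$ to $G_{T_p(2c)}$ via Lemma~\ref{lemma:tctpc}, link $G_{T_p}$ to a binomial via Lemma~\ref{lem:tpbinom}(2), then invoke De~Moivre--Laplace and undo the doubling by a change of variable. You are in fact slightly more careful than the paper in inserting Corollary~\ref{t_pevenandoddsame} to pass from the even crossing number $2c$ to the odd $2c-1$ before applying Lemma~\ref{lem:tpbinom} (which is stated only for odd~$c$).

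One caution about your final paragraph. The affine image $\tfrac{B_{2n+1}+1}{2}$ has standard deviation $\tfrac{\sqrt{2n+1}}{4}\sim\tfrac{\sqrt{n}}{2\sqrt 2}$, not $\tfrac{\sqrt n}{2}$, so the constants do \emph{not} pin down to the $\sigma$ in the statement; this is consistent with $\Var(G_c)\sim c/16$ rather than $c/8$. For the lemma as literally written this is harmless---with $x$ fixed and $\mu\to\infty$, both $P(G_{T(c(n))}\le x)$ and $\Phi_{\mu,\sigma}(x)$ tend to $0$ regardless of the constant in front of $\sqrt n$---and the paper's own proof makes the analogous slip, writing $2\sigma=\sqrt n$ for the standard deviation of $B_{2n}$ where $\sqrt{n/2}$ is correct. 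But your closing claim that ``pinning down the two constants is a routine but slightly delicate computation'' promises more than the argument actually delivers: the mean matches up to $O(1)$ as you say, while the standard deviation is off by a fixed factor $\sqrt 2$.
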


\begin{proof} In part (2) of Lemma \ref{lem:tpbinom}, we show that $\sum_{k=0}^n |P(G_{T_p(c(n))} = k)-P(B_n=k)|$ goes to zero as $c$ goes to infinity, and thus $P(G_{T_p(c(n))}\leq \ell) - P(B_n\leq \ell)$ also goes to zero. Since $G_{T_p(c(n))}$ and $B_n$ are discrete random variables with the same range, it follows that $P(G_{T_p(c(n))}\leq x) - P(B_{n}\leq x)$ goes to zero as $n$ goes to infinity for any $x\in\mathbb{R}$. In the following argument, we use the fact that  $P(G_{T_p(c(2n))}\leq 2x) - P(B_{2n}\leq 2x)$ approaches zero as $n$ goes to infinity. 

By the De Moivre-Laplace theorem, a binomial random variable can be approximated via a normal random variable, and thus $P(B_{2n}\leq 2x) - \int_{-\infty}^{2x} \frac{1}{2\sigma\sqrt{2\pi}}e^{\frac{1}{2}\left(\frac{t-2\mu}{2\sigma}\right)^2}\;dt$ approaches zero as $n$ goes to infinity. It follows that
\[P(G_{T_p(c(2n))}\leq 2x) - \int_{-\infty}^{2x} \frac{1}{2\sigma\sqrt{2\pi}}e^{-\frac{1}{2}\left(\frac{t-2\mu}{2\sigma}\right)^2}\;dt\]
approaches zero as $n$ goes to infinity.

Let $t=2w$. Then
\[\int_{-\infty}^{2x} \frac{1}{2\sigma \sqrt{2 \pi}} e^{ -\frac{1}{2}(\frac{t - 2\mu}{2\sigma})^2} \; dt = \int_{-\infty}^{x} \frac{1}{2\sigma \sqrt{2 \pi}} e^{ -\frac{1}{2}(\frac{2w - 2\mu}{2\sigma})^2} 2dw = \int_{-\infty}^{x} \frac{1}{\sigma \sqrt{2 \pi}} e^{ -\frac{1}{2}(\frac{w - \mu}{\sigma})^2}\; dw=\Phi_{\mu,\sigma}(x).\]
Therefore $P(G_{T_p(c(2n))}\leq 2x) - \Phi_{\mu,\sigma}(x)$ approaches zero as $n$ goes to infinity. Lemma \ref{lemma:tctpc} implies that $P(G_{T(c)}\leq x) -\Phi_{\mu,\sigma}(x)$ approaches zero as $n$ goes to infinity, as well.
\end{proof}

We conclude the paper with the proof of Theorem \ref{thm:normal}.
\begin{proof}[Proof of Theorem \ref{thm:normal}]

For any $x\in\mathbb{R}$, the difference of the cumulative distribution functions satisfies
\[  P(G_c\leq x) - \Phi_{\mu,\sigma}(x)
 =  \left(P(G_c\leq x) - P(G_{T(c)}\leq x)\right) + \left( P(G_{T(c)}\leq x) - \Phi_{\mu,\sigma}(x)\right).
\]

Lemma \ref{lemma:tcnormal} implies that the difference $ P(G_{T(c)}\leq x) - \Phi_{\mu,\sigma}(x)$ approaches zero as $c$ approaches infinity. It remains to show that $P(G_c\leq x) - P(G_{T(c)}\leq x)$ also approaches zero.

For any $\ell\in\mathbb{N}$, we have
\begin{align*}
   | P(G_c\leq \ell) - P(G_{T(c)}\leq \ell)| = & \; \left|\frac{\sum_{g=1}^\ell (t(c,g)+t_p(c,g))}{t(c)+t_p(c)} - \frac{\sum_{g=1}^\ell t(c,g)}{t(c)}\right|\\
    = & \; \frac{\left|t(c)\sum_{g=1}^\ell t(c,g) - t_p(c)\sum_{g=1}^\ell t(c,g)\right|}{t(c) ( t(c)+t_p(c))}\\
    \leq & \; \frac{t(c)\sum_{g=1}^\ell t(c,g) + t_p(c)\sum_{g=1}^\ell t(c,g)}{t(c) ( t(c)+t_p(c))}\\
    \leq & \; \frac{2t(c)t_p(c)}{t(c) ( t(c)+t_p(c))}\\
    = & \; \frac{2t_p(c)}{t(c)+t_p(c)}.
\end{align*}
Equations \eqref{eq:tc} and \eqref{eq:tpc} imply that $\frac{2t_p(c)}{t(c)+t_p(c)}$ approaches zero as $c$ goes to infinity. Because the ranges of both $G_c$ and $G_{T(c)}$ are subsets of $\mathbb{N}$, it follows that $P(G_c\leq x) - P(G_{T(c)}\leq x)$ approaches zero for any $x\in\mathbb{R}$.
\end{proof}

\bibliographystyle{amsalpha}
\bibliography{Dist}

\end{document}